%\pdfoutput=1
%\synctex=1
\documentclass[final,US,11pt,reqno,bibtex,nopagebackref,notcite,notref]{amsart}
\usepackage{liao,comment}

\DeclareMathOperator{\im}{im}

\DeclareMathOperator{\Hom}{Hom}

\DeclareMathOperator{\End}{End}

\DeclareMathOperator{\id}{id}
\DeclareMathOperator{\rk}{rk}
\DeclareMathOperator{\CE}{CE}

\DeclareMathOperator{\Bott}{Bott}
\DeclareMathOperator{\At}{At}
\DeclareMathOperator{\Toddcocycle}{td}
\DeclareMathOperator{\Todd}{Td}
\DeclareMathOperator{\ver}{vertical}
\DeclareMathOperator{\hor}{horizontal}
\DeclareMathOperator{\Ber}{Ber}
\DeclareMathOperator{\str}{str}
\DeclareMathOperator{\tr}{tr}

\newcommand{\ZZ}{\mathbb{Z}} 
 
\newcommand{\RR}{\mathbb{R}} 
\newcommand{\CC}{\mathbb{C}} 
\newcommand{\KK}{\Bbbk}
\newcommand{\TT}{\mathbb{T}} 
\newcommand{\into}{\hookrightarrow}
\newcommand{\onto}{\twoheadrightarrow}
\newcommand{\xto}[1]{\xrightarrow{#1}}
\newcommand{\dual}{^{\vee}}
\newcommand{\inv}{^{-1}}

\newcommand{\XX}{{\mathfrak{X}}}

\newcommand{\frakg}{\mathfrak g}

\newcommand{\cL}{\mathcal{L}}
\newcommand{\cE}{\mathcal{E}}
\newcommand{\cM}{\mathcal{M}}

\newcommand{\cQ}{\mathcal{Q}}

\newcommand{\cO}{\mathcal{O}}
\newcommand{\cT}{\mathcal{T}}
\newcommand{\cF}{\mathcal{F}}

\newcommand{\sA}{\mathscr{A}}
\newcommand{\sB}{\mathscr{B}}
\newcommand{\sC}{\mathscr{C}}
\newcommand{\sD}{\mathscr{D}}

\newcommand{\pair}[2]{\langle #1, #2 \rangle}

\newcommand{\argument}{-}
\newcommand{\px}[1]{\frac{\partial}{\partial x^{#1}}}
\newcommand{\peta}[1]{\frac{\partial}{\partial \eta^{#1}}}
\newcommand{\pshift}{\mathrm{s}}
\newcommand{\nshift}{\mathfrak{s}}
\newcommand{\pairAtiyah}[1]{R^{#1}_{1,1}}
\newcommand{\cLAtiyah}{\At_\cL^{\nabla^\cL}}
\newcommand{\cLAtiyahProj}{\bar\At_\cL^{\nabla}}

\usepackage{scalerel}

\newcommand{\pa}{p_{ \scaleto{A}{3.5pt}}}
\newcommand{\ia}{i_{ \scaleto{A}{3.5pt}}}
\newcommand{\pb}{p_{ \scaleto{B}{3.5pt}}}
\newcommand{\ib}{i_{ \scaleto{B}{3.5pt}}}
\newcommand{\hpt}{\widetilde{p_{ \scaleto{A}{3.5pt}}}}
\newcommand{\incl}{\widetilde{\imath_{ \scaleto{A}{3.5pt}}}}
\newcommand{\anchorL}{\rho_{ \scaleto{L}{3.5pt}}}

\newcommand{\anchor}{\rho}
\newcommand{\christoffelLL}{{\Gamma}}
\newcommand{\christoffelLA}{{^{\scaleto{L}{3.5pt}}_{\scaleto{A}{3.5pt}}\Gamma}}
\newcommand{\christoffelAL}{{^{\scaleto{A}{3.5pt}}_{\scaleto{L}{3.5pt}}\Gamma}}
\newcommand{\christoffelAA}{{^{\scaleto{A}{3.5pt}}_{\scaleto{A}{3.5pt}}\Gamma}}
\newcommand{\ssym}{\widetilde{\mathrm{sym}}}
\newcommand{\Syms}[2]{S^{#1}(#2[-1])}

\title{Atiyah classes and Todd classes of pullback dg Lie algebroids
associated with Lie pairs}

\thanks{Research partially supported by KIAS
Individual Grant MG072801 and MoST/NSTC Grant
110-2115-M-007-001-MY2 and 112-2115-M-007-016-MY3.}

\author{Hsuan-Yi Liao}
\address{Department of Mathematics, National Tsing Hua University}
\email{hyliao@math.nthu.edu.tw}

\begin{document}

\begin{abstract}
For a Lie algebroid $L$ and a Lie subalgebroid $A$, i.e.\ a Lie pair $(L,A)$,
we study the Atiyah class and the Todd class of the pullback dg
(i.e.\ differential graded) Lie algebroid $\pi^! L$ of $L$
along the bundle projection $\pi:A[1] \to M$ of the shifted vector bundle $A[1]$.
Applying the homological perturbation lemma, we provide a new construction
of Sti\'{e}non--Vitagliano--Xu's contraction relating the cochain complex
$\big(\Gamma(\pi^! L),\cQ\big)$ of sections of $\pi^! L$ to the Chevalley--Eilenberg
complex $(\Gamma(\Lambda^\bullet A^\vee\otimes(L/A)),d^{\Bott})$ of the Bott
representation. Using this contraction, we construct two isomorphisms:
the first identifies the cohomology of the cochain complex
$(\Gamma((\pi^! L)^\vee\otimes\End(\pi^! L)),\cQ)$ with the Chevalley--Eilenberg
cohomology $H^\bullet_{\CE}(A,(L/A)^\vee\otimes\End(L/A))$ arising from the
Bott representation, while the second identifies the cohomologies
$H^\bullet(\Gamma(\Lambda(\pi^! L)^\vee),\cQ)$ and $H^\bullet_{\CE}(A,\Lambda(L/A)^\vee)$.
We prove that this pair of isomorphisms identifies the Atiyah class and the Todd class
of the dg Lie algebroid $\pi^! L$ with the Atiyah class and the Todd class of the Lie pair $(L,A)$, respectively.
\end{abstract}

\maketitle

\tableofcontents

\section*{Introduction}

In \cite{MR86359}, Atiyah introduced a characteristic class, now known as the Atiyah class,
to characterize the obstruction to the existence of holomorphic connections
on a holomorphic vector bundle. Decades later, Kapranov \cite{MR1671737} showed
that the Atiyah class of a K\"{a}hler manifold $X$ induces an $L_\infty[1]$ algebra
structure on the Dolbeault complex $\Omega^{0,\bullet}(T_X^{1,0})$.
Kapranov's result was later shown to hold for all complex manifolds
\cite{MR4271478,MR2989383}. The Atiyah class plays an important role in the construction
of Rozansky--Witten invariants \cite{MR1671737,MR1671725}.
In addition to Rozansky--Witten theory, Kontsevich \cite{MR2062626} brought to light
a deep relation between the Todd class of complex manifolds and the Duflo element
of Lie algebras. See \cite{MR3754617,MR4584414} for a unified framework
for deriving the Duflo--Kontsevich isomorphism for Lie algebras and Kontsevich's
isomorphism for complex manifolds \cite{MR2062626}.

The works of Kapranov \cite{MR1671737} and Kontsevich \cite{MR2062626} have led
to many new developments in the theory of Atiyah classes. For instance,
see \cite{MR3439229,MR4271478,MR3319134,MR2646112,2011arXiv1112.0816C,MR3331615,MR4393962}. 
In the present paper, we are particularly interested in Chen--Sti\'{e}non--Xu's approach
via Lie pairs \cite{MR3439229} and Mehta--Sti\'{e}non--Xu's approach
via dg Lie algebroids \cite{MR3319134}.
By a Lie pair $(L,A)$, we mean a pair consisting of a Lie algebroid $L$
and a Lie subalgebroid $A$ of $L$ over a common base manifold $M$.
In \cite{MR3439229}, Chen, Sti\'{e}non and Xu introduced the Atiyah class
of a Lie pair $(L,A)$, which captures the obstruction to the existence
of compatible $L$-connections on $L/A$ extending the Bott $A$-connection.
Chen--Sti\'{e}non--Xu's theory includes the Atiyah class of complex manifolds
and the Molino class \cite{MR281224} of foliations as special cases.
In a different direction, Mehta, Sti\'{e}non and Xu \cite{MR3319134}
introduced the Atiyah class of a dg vector bundle $\cE$
relative to a dg Lie algebroid $\cL$, which measures the obstruction
to the existence of $\cL$-connections on $\cE$ which are compatible with the dg structure.

In fact, Mehta--Sti\'{e}non--Xu's approach \cite{MR3319134} is more general
than Chen--Sti\'{e}non--Xu's approach \cite{MR3439229}. 
In \cite{MR3724780}, Batakidis and Voglaire constructed a dg manifold structure
on $L[1]\oplus L/A$ --- which was independently constructed by Sti\'{e}non
and Xu \cite{MR4150934} --- by Fedosov's iteration method and they proved that,
for a matched pair of Lie algebroids, the Atiyah class of the Fedosov dg Lie algebroid
$\cF \to L[1]\oplus L/A$ can be identified with the Atiyah class of the Lie pair $(L,A)$.
In \cite[Section~1.7]{MR3964152}, Sti\'{e}non, Xu and the author obtained
an analogous identification for arbitrary Lie pairs. 
In \cite{MR3877426}, Chen, Xiang and Xu constructed different quasi-isomorphisms
for the Lie pairs $(T_M,F)$ arising from integrable distributions.
They proved that the Atiyah class of the dg manifold $F[1]$ associated
with a foliation corresponds to the Atiyah class of the Lie pair $(T_M,F)$
under a natural quasi-isomorphism.
In the present paper, we prove a theorem analogous to Chen--Xiang--Xu's theorem
in full generality. Namely, for an arbitrary Lie pair $(L,A)$ over a manifold $M$,
we investigate the pullback dg Lie algebroid $\cL = \pi^!L \to A[1]$
along the projection $\pi:A[1] \to M$ whose associated cohomology
$H^\bullet\big(\Gamma(\cL\dual \otimes \End \cL),\cQ\big)$ is isomorphic
to the Chevalley--Eilenberg cohomology
$H_{\CE}^\bullet\big(A,(L/A)\dual \otimes \End (L/A)\big)$,
and we prove that the Atiyah class of the dg Lie algebroid $\cL$ is identified
with the Atiyah class of the Lie pair $(L,A)$ under this isomorphism.
The dg Lie algebroid $\cL \to A[1]$ we consider here is much simpler
than the Fedosov dg Lie algebroid $\cF \to L[1] \oplus L/A$. 

Let $(L,A)$ be a Lie pair, and $B$ be the quotient vector bundle $B = L/A$. 
In \cite{2022arXiv221016769S}, Sti\'{e}non, Vitagliano and Xu studied
the pullback Lie algebroid $\cL = \pi^!L \to A[1]$ and proved that $\cL $
is a dg Lie algebroid over the dg manifold $(A[1],d_A)$, where $d_A$
is the Chevalley--Eilenberg differential of the Lie algebroid $A$.
See \cite[Section~4.2]{MR2157566} for the definition of pullback Lie algebroids.
Furthermore, by choosing a splitting of the short exact sequence of vector bundles
\begin{equation}\label{eq_intro:splitting}
\begin{tikzcd}
0 \ar[r] &  A \ar[r, hook, "\ia"'] & \ar[l, bend right, dashed,"\pa"'] L \ar[r, two heads,"\pb"'] & \ar[l, bend right, dashed,"\ib"'] B \ar[r] & 0
,\end{tikzcd}
\end{equation}
Sti\'{e}non, Vitagliano and Xu proved the following 
\begin{trivlist}
\item {\bf Theorem A.}
{\it By choosing a splitting \eqref{eq_intro:splitting}, one has the contraction data 
\begin{equation}\label{eq_inrto:piL_contraction}
\begin{tikzcd}
\big(\Gamma(\pi^* B),d^{\Bott}\big)
\arrow[r,  shift left,hook] &  \big(\Gamma(\cL), \cQ \big)
\arrow[l, shift left,two heads] \arrow[loop,out=12,in= -12,looseness = 3]
\end{tikzcd} 
\end{equation}
over the dg ring $(\Gamma(\Lambda^\bullet A\dual),d_A)$, where $d^{\Bott}$ is the Bott differential, and $\cQ$ is induced by the dg Lie algebroid structure of $\cL$.} 
\end{trivlist}
Sti\'{e}non--Vitagliano--Xu's method is computational and heavily based
on explicit formulas. Here, we give a more conceptual proof of Theorem~A
relying on the homological perturbation lemma (Theorem~\ref{thm:HPLCptForm}).
It is immediate that a splitting of a short exact sequence of vector spaces
(see \eqref{saturn})
induces a contraction data (see \eqref{eq:VecSpContraction}).
Applying this vector space construction fiberwisely
to $\cL = T_{A[1]}\times_{T_M} L \cong \pi^*A[1] \oplus \pi^*L$,
from a splitting \eqref{eq_intro:splitting}, we obtain a contraction data 
\begin{equation}\label{eq_intro:BasicContraction}
\begin{tikzcd}
\big(\Gamma(\pi^* B),0\big)
\arrow[r, " \ib", shift left,hook] &  \big(\Gamma(\cL), \incl\big)
\arrow[l, " \pb", shift left,two heads] \arrow[loop, "\hpt",out=12,in= -12,looseness = 3]
\end{tikzcd}
\end{equation}
over $\Gamma(\Lambda^\bullet A\dual)$.
Then we perturb \eqref{eq_intro:BasicContraction} by $\cQ -\incl$
and prove that the perturbed contraction coincides with Sti\'{e}non--Vitagliano--Xu's
contraction \eqref{eq_inrto:piL_contraction}.
See Proposition~\ref{prop:BasicContractionLiePair}
and Theorem~\ref{prop:ContPullbackLieAbd} for details.

In order to state our main theorem, we briefly review the Atiyah class
and the Todd class of a Lie pair and of a dg Lie algebroid.
Let $(L,A)$ be a Lie pair, and $\nabla$ be an $L$-connection on $B=L/A$
extending the Bott connection. The curvature of $\nabla$ induces
a Chevalley--Eilenberg cocycle
$\pairAtiyah\nabla \in \Gamma(A\dual \otimes B\dual \otimes \End B)$.
The Atiyah class of the Lie pair $(L,A)$ is the cohomology class
$\alpha_{L/A} = [\pairAtiyah\nabla] \in H^1_{\CE}(A, B\dual \otimes \End B)$,
which is independent of the choice of $L$-connection $\nabla$.
The Todd class of the Lie pair $(L,A)$ is the cohomology class
$$
\Todd_{L/A} = \det\left( \dfrac{\alpha_{L/A}}{1-e^{-\alpha_{L/A}}} \right)
\in \bigoplus_{k=0}^\infty H_{\CE}^k( A, \Lambda^k B\dual).
$$

Let $\cL \to \cM$ be a dg Lie algebroid equipped with the homological vector field $\cQ$,
and let $\tilde\nabla$ be an $\cL$-connection on $\cL$. The Lie derivative
$\At_\cL^{\tilde\nabla} = L_\cQ(\tilde\nabla)$ of the connection $\tilde\nabla$
is a $\cQ$-cocycle $\At_\cL^{\tilde\nabla} \in \Gamma(\cL\dual\otimes \End \cL)$.
The induced cohomology class $\alpha_\cL = [\At_\cL^{\tilde\nabla}]
\in H^1\big(\Gamma(\cL\dual \otimes \End \cL), \cQ\big)$ is independent
of the choice of $\cL$-connection $\tilde\nabla$ and is called the Atiyah class
of the dg Lie algebroid $\cL$.
The Todd class of the dg Lie algebroid $\cL$ is the cohomology class
$$
\Todd_{\cL} = \Ber\left( \dfrac{\alpha_{\cL}}{1-e^{-\alpha_{\cL}}} \right)
\in \prod_{k=0}^\infty H^k(\Gamma(\Lambda^k \cL\dual), \cQ),
$$
where $\Ber$ denotes the Berezinian.

According to general algebraic constructions (Section~\ref{sec:HomTensorConstruction}),
the contraction \eqref{eq_inrto:piL_contraction} induces the contraction data
\begin{equation}\label{eq_into:AtiyahContraction}
\begin{tikzcd}
\Big(\Gamma\big(\pi^*(B\dual \otimes \End B)\big),d^{\Bott}\Big)
\arrow[r, shift left,hook,"\cT^1_2"] &  \Big(\Gamma( \cL\dual \otimes \End \cL), \cQ \Big)
\arrow[l, " \Pi^1_2", shift left,two heads] \arrow[loop, "H^1_2",out=7,in= -7,looseness = 3]
\end{tikzcd}
\end{equation}
and
\begin{equation}\label{eq_intro:WedgeContraction}
\begin{tikzcd}
\big(\Gamma\big(\pi^* (\Lambda B\dual)\big),d^{\Bott} \big)
\arrow[r, " \cT_\Lambda", shift left,hook] &  \big(\Gamma(\Lambda \cL\dual ), \cQ \big)
\arrow[l, " \Pi_\Lambda", shift left,two heads] \arrow[loop, "H_\Lambda",out=8,in= -8,looseness = 3]
\end{tikzcd},
\end{equation}
where $\cL$ is the pullback dg Lie algebroid $\pi^!L \to A[1]$.
In particular, the projection maps $\Pi^1_2$ and $\Pi_\Lambda$ are quasi-isomorphisms. 
Our main theorem is the following
\begin{trivlist}
\item {\bf Theorem B.}
{\it Given any Lie pair $(L,A)$, the isomorphisms
\begin{gather*}
(\Pi^1_2)_*: H^1\big(\Gamma(\cL\dual \otimes \End \cL), \cQ\big) \xto\cong H^1_{\CE}(A, B\dual \otimes \End B), \\
(\Pi_\Lambda)_*:\prod_{k=0}^\infty H^k(\Gamma(\Lambda^k \cL\dual), \cQ) \xto\cong \bigoplus_{k=0}^\infty H_{\CE}^k( A, \Lambda^k B\dual)
\end{gather*}
send the Atiyah class and the Todd class of the dg Lie algebroid $\cL=\pi^! L$
to the Atiyah class and the Todd class of the Lie pair $(L,A)$, respectively:
\begin{gather*}
(\Pi^1_2)_*(\alpha_\cL) = \alpha_{L/A},\\
(\Pi_\Lambda)_*(\Todd_\cL) = \Todd_{L/A}.
\end{gather*}
}
\end{trivlist}

Let $L=T_M$ be the tangent bundle of a manifold $M$, and let $A=F \subset T_M$ be a Lie subalgebroid whose sections form an integrable distribution. The pullback dg Lie algebroid $\pi^! T_M $ can be identified with the dg Lie algebroid $T_{F[1]}$ of tangent bundle equipped with the Lie derivative $L_{d_F}$ with respect to the Chevalley--Eilenberg differential $d_F$ of $F$. In this case, the Atiyah class and the Todd class of the dg Lie algebroid $T_{F[1]}$ are exactly the Atiyah class and the Todd class of the dg manifold $F[1]$, respectively, and we recover Chen--Xiang--Xu's theorems in \cite{MR3877426} by Theorem~B.

\subsection*{Notations and conventions}
We fix a base field $\KK = \RR$ or $\CC$ in this paper.
The notation $C^\infty(M)=C^\infty(M,\KK)$ refers to the algebra of smooth functions
on a manifold $M$ valued in $\KK$, and $T_M$ refers to $T_M \otimes_\RR \KK$
unless stated otherwise.

In this paper, graded means $\ZZ$-graded. We write dg for differential graded.

We say that a graded ring $R$ is commutative if $xy = (-1)^{|x||y|} yx$
for all homogeneous $x,y \in R$.

When we use the notation $|x|=k$, we mean $x$ is a homogeneous element
in a graded $R$-module $V= \bigoplus_n V^n$ and the degree of $x$ is $k$,
i.e.\ $x \in V^k$. The notation $V[i]$ refers to the $R$-module $V$
with the shifted grading $(V[i])^k = V^{i+k}$.

Let $V,W,V',W'$ be graded modules over a graded ring $R$. We denote by $\Hom_R^k(V,W)$ the space of $R$-linear maps from $V$ to $W$ of degree $k$, and $\Hom_R(V,W) = \bigoplus_k \Hom_R^k(V,W)$. Here, we say $f:V \to W$ is $R$-linear of degree $|f|$ if $f(r \cdot x) = (-1)^{|r||f|}r \cdot f(x)$ for any homogeneous elements $r \in R$, $x \in V$. Note that $\Hom_R^0(V,W[i]) = \Hom_R^i(V,W)$. 
For $f\in \Hom_R^{|f|}(V,W)$ and $g\in \Hom_R^{|g|}(V',W')$, we denote by $f\otimes g$ the map $f\otimes g \in \Hom_R^{|f|+|g|}(V\otimes V', W \otimes W')$ satisfying $(f\otimes g)(v\otimes v') = (-1)^{|v||g|} f(v)\otimes g(v')$, $\forall\, v\in V,v'\in V'$.

Let $W$ be a graded module over a commutative graded ring $R$. The graded exterior algebra $\Lambda W$ generated by $W$ over $R$ is
$$
\Lambda W = \bigg(\bigoplus_{n=0}^\infty \overbrace{W \otimes_R \cdots \otimes_R W}^{n \text{ times}} \bigg) \Bigg/ \langle w_1 \otimes w_2 + (-1)^{|w_1||w_2|} w_2 \otimes w_1 \rangle,
$$
equipped with the product $\wedge$ induced by the tensor product and the degree assignment
$$
|w_1 \wedge \cdots \wedge w_n| = |w_1|+\cdots + |w_n|.
$$
We denote by $\Lambda^n W$ the image of
$\overbrace{W \otimes_R \cdots \otimes_R W}^{n \text{ times}}$
in $\Lambda W$ under the quotient map.
It is well-known that $\Lambda^n W$ and $\Syms{n}{W}[n]$ are isomorphic as graded modules. By the symbol $\Lambda^\bullet W$, we mean $\Lambda^\bullet W = \bigoplus_k (\Lambda^k W)[-k]$ which is isomorphic to $S(W[-1])$.

We use the triple $(W,\delta;h)$ of big space, coboundary map on big space and homotopy operator to represent the contraction data
$$
\begin{tikzcd}
(V , d)
\arrow[r, " \tau", shift left,hook] &  (W ,\delta)
\arrow[l, " \sigma", shift left,two heads] \arrow[loop, "h",out=12,in= -12,looseness = 3]
\end{tikzcd}.
$$ 
See Section~\ref{sec:CompareHLP} for how one generates the whole contraction data from the triple $(W,\delta;h)$.

\subsection*{Acknowledgments}
The author wishes to thank Ping Xu for suggesting this problem and for helpful discussions. 
The author is also grateful to Noriaki Ikeda, Camille Laurent-Gengoux and Mathieu Sti\'{e}non for fruitful discussions and useful comments.

\section{Preliminaries}

\subsection{Connections for a Lie algebroid}
Let $L$ be a Lie $\KK$-algebroid over a smooth manifold $M$ with the anchor map $\rho:L \to T_M $.  Let $E \to M$ be a $\KK$-vector bundle. An \textbf{$L$-connection} $\nabla$ on $E$ is a $\KK$-bilinear map 
$$
\nabla: \Gamma(L) \times \Gamma(E) \to \Gamma(E), \, (l,e) \mapsto \nabla_l e
$$
which satisfies the properties
\begin{gather*}
\nabla_{f\cdot l} e = f \cdot \nabla_l e, \\
\nabla_l(f \cdot e) = \rho(l)(f) \cdot e + f \nabla_l e,
\end{gather*}
for any $l \in \Gamma(L)$, $e \in \Gamma(E)$, and $f \in C^\infty (M)$. 
A \textbf{representation} of  $L$ on $E$ is a  \textbf{flat connection} $\nabla$ on $E$, i.e.\ a $L$-connection $\nabla: \Gamma(L) \times \Gamma(E) \to \Gamma(E)$ satisfying 
$$
\nabla_{l_1} \nabla_{l_2} e - \nabla_{l_2}\nabla_{l_1} e - \nabla_{[l_1,l_2]} e =0,
$$
for any $l_1,l_2 \in \Gamma(L)$ and $e \in \Gamma(E)$. A vector bundle equipped with a representation of the Lie algebroid $L$ is called an \textbf{$L$-module}.

Let $L$ be a Lie algebroid over a smooth manifold $M$. The \textbf{Chevalley--Eilenberg differential} is the linear map 
$$
d_L: \Gamma(\Lambda^k L\dual) \to \Gamma(\Lambda^{k+1}L\dual)
$$ 
defined by 
$$
(d_L\omega)(l_0, \cdots, l_k) = \sum_{i=0}^k (-1)^i \rho(l_i)\big(\omega(l_0, \cdots, \widehat{l_i}, \cdots, l_k)\big) + \sum_{i<j} (-1)^{i+j}\omega([l_i,l_j],l_0,\cdots, \widehat{l_i},\cdots,\widehat{l_j}, \cdots, l_k)
$$
which makes the exterior algebra $\bigoplus_{k=0}^\infty \Gamma(\Lambda^k L\dual)$ into a commutative dg algebra. Given an $L$-connection $\nabla$ on a vector bundle $E \to M$, the \textbf{covariant derivative} is the operator 
$$
d_L^\nabla:\Gamma(\Lambda^k L\dual \otimes E) \to \Gamma(\Lambda^{k+1} L\dual \otimes E)
$$
which maps a section $\omega \otimes e \in \Gamma(\Lambda^k L\dual \otimes E)$ to 
$$
d_L^\nabla(\omega\otimes e) = d_L(\omega) \otimes e + \sum_{i=1}^{\rk L} (\nu_i \wedge \omega)\otimes \nabla_{v_i} e,
$$
where $v_1,\cdots, v_{\rk L}$ is any local frame of the vector bundle $L$, and $\nu_1,\cdots, \nu_{\rk L}$ is its dual frame. The flatness of connection $\nabla$ is equivalent to that the covariant derivative $d_L^\nabla$ is a coboundary map: $d_L^\nabla \circ d_L^\nabla =0$.

\begin{example}\label{ex:Bott}
Let $(L,A)$ be a \textbf{Lie pair}, i.e.\ a pair of a Lie algebroid $L \to M$ and a Lie subalgebroid $A \to M$ of $L$. 
The {\bf Bott connection} of $A$ on the quotient bundle $B =L/A$ is the flat connection 
$$
\nabla^{\Bott}: \Gamma(A) \times \Gamma(B) \to \Gamma(B)
$$ 
defined by 
$$
\nabla^{\Bott}_a(\pb(l)) = \pb([a,l]), \qquad \forall \,a \in \Gamma(A), \, l \in \Gamma(L),
$$
where $\pb:L \onto B=L/A$ is the canonical projection. Its covariant derivative 
$$
d^{\Bott}: \Gamma(\Lambda^\bullet A\dual \otimes B) \to \Gamma(\Lambda^{\bullet +1} A\dual \otimes B)
$$
is called the {\bf Bott differential}. 
\end{example}

\subsection{Atiyah class and Todd class of a Lie pair}

A Lie pair $(L,A)$ consists of a Lie algebroid $L$ and a Lie subalgebroid $A$ of $L$ over a common base manifold $M$. 
The structure of Lie pairs arises from geometric problems naturally. 
A simple example is a pair of a Lie algebra and its Lie subalgebra. Such a pair is a Lie pair over a point. 
Another well-known example is from complex manifolds. If $X$ is a complex manifold, then the pair $(T_X \otimes \CC, T_X^{0,1})$ is a Lie pair.  
More generally, if $F$ is any Lie subalgebroid of the tangent bundle $T_M$, then the pair $(T_M,F)$ forms a Lie pair. Note that $F$ can be considered as the tangent bundle of a regular foliation.  
In Section~\ref{sec:Application}, we will also consider another type of Lie pairs which arise from $\frakg$-manifolds (i.e.\ manifolds with Lie algebra actions). 

Let $(L,A)$ be a Lie pair over a manifold $M$, and $B$ be the quotient vector bundle $B = L/A$. We have the short exact sequence 
$$
\begin{tikzcd}
0 \ar[r] & A \ar[r, hook, "\ia"] &   L \ar[r, two heads,"\pb"] &   B \ar[r] & 0
\end{tikzcd}
$$
of vector bundles. 
An $L$-connection $\nabla$ on $B$ is said to \textbf{extend the Bott connection}  if 
$$
\nabla_{\ia(a)} \big(\pb(l)\big) = \nabla^{\Bott}_a \big(\pb(l)\big) = \pb([\ia(a),l]),
$$
for any $a \in \Gamma(A)$, $l \in \Gamma(L)$.

Let $\nabla$ be an $L$-connection on $B$ extending the Bott connection. The curvature of $\nabla$ is the bundle map $R^\nabla: \Lambda^2 L \to \End B$ defined by 
$$
R^\nabla(l_1,l_2) = \nabla_{l_1} \nabla_{l_2} - \nabla_{l_2} \nabla_{l_1} - \nabla_{[l_1,l_2]}, \qquad \forall \, l_1,l_2 \in \Gamma(L).
$$
Since the Bott connection is flat, the restriction of $R^\nabla$ to $\Lambda^2 A$ vanishes. Thus, the curvature $R^\nabla$ induces a bundle map $\pairAtiyah\nabla:A \otimes B \to \End B$, 
$$
\pairAtiyah\nabla\big(a, \pb(l)\big) = R^\nabla(a,l) = \nabla_a \nabla_l - \nabla_l \nabla_a - \nabla_{[a,l]}, \qquad \forall \, a \in \Gamma(A),\, l \in \Gamma(L).
$$

\begin{remark}
An $L$-connection $\nabla$ on a vector bundle $E$ induces an $L$-connection on the vector bundle $E\dual \otimes \End(E) \cong \Hom(E\otimes E,E)$ via the equation 
$$
\nabla_l\big(\alpha(e_1\otimes e_2)\big) = \big(\nabla_l(\alpha)\big)(e_1\otimes e_2) + \alpha\big((\nabla_l e_1)\otimes e_2\big) + \alpha\big( e_1 \otimes (\nabla_l e_2)\big),
$$
for any $l\in\Gamma(L)$, $e_1,e_2 \in \Gamma(E)$, and $\alpha \in \Gamma(E\dual \otimes \End E)$. If the given $L$-connection on $E$ is flat, then the induced connection on $E\dual \otimes \End E$ is also flat. 
\end{remark}

\begin{proposition}[\cite{MR3439229}] 
The section $\pairAtiyah\nabla \in \Gamma(A\dual \otimes B\dual \otimes \End B)$ is a $1$-cocycle for the Lie algebroid $A$ with values in the $A$-module $B\dual \otimes \End B$. Furthermore, the cohomology class $\alpha_{L/A}\in H_{\CE}^1(A,B\dual \otimes \End B)$ of $ \pairAtiyah\nabla$ is independent of the choice of $L$-connection $\nabla$ extending the Bott connection. 
\end{proposition}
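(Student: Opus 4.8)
The plan is to reduce both claims to standard curvature identities for the $L$-connection $\nabla$ on $B$, the only structural input being the hypothesis that $\nabla$ restricts to the Bott connection in the $A$-directions. The one preliminary I would record is that, since $\nabla_{\ia(a)}=\nabla^{\Bott}_a$ on $\Gamma(B)$, the induced operator $\bigl[\nabla_{\ia(a)},\cdot\,\bigr]$ on $\Gamma(\End B)$ agrees with the Bott $A$-action on $\End B$, and likewise on $B\dual\otimes\End B$; so $\pairAtiyah\nabla$ is genuinely a $1$-cochain of $\bigl(\Gamma(\Lambda^\bullet A\dual\otimes B\dual\otimes\End B),d^{\Bott}\bigr)$, and terms of the shape $\bigl[\nabla_a,R^\nabla(a',l)\bigr]$ and $R^\nabla(a,[a',l])=\pairAtiyah\nabla\bigl(a,\nabla^{\Bott}_{a'}\pb(l)\bigr)$ may be read directly as $d^{\Bott}$-contributions.

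For the cocycle property I would invoke the second Bianchi identity $d_L^{\nabla^{\End B}}R^\nabla=0$ for the $L$-connection $\nabla$ on $\End B$, evaluated on $(a_0,a_1,l)$ with $a_0,a_1\in\Gamma(A)$ and $l\in\Gamma(L)$:
\[
\sum_{\mathrm{cyc}}\Bigl(\bigl[\nabla_{a_0},R^\nabla(a_1,l)\bigr]-R^\nabla\bigl([a_0,a_1],l\bigr)\Bigr)=0 .
\]
Here the flatness of the Bott connection gives $R^\nabla|_{\Lambda^2A}=0$, which annihilates the term $\bigl[\nabla_l,R^\nabla(a_0,a_1)\bigr]$, and $A\subset L$ being a subalgebroid gives $[a_0,a_1]\in\Gamma(A)$, so each surviving term involves only $R^\nabla$ evaluated on one $A$-entry and one $L$-entry. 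Expanding $(d^{\Bott}\pairAtiyah\nabla)(a_0,a_1)$ by the Chevalley--Eilenberg formula for a $1$-form with values in the $A$-module $B\dual\otimes\End B$ and pairing against $\pb(l)$ produces exactly the five surviving terms of the Bianchi identity, the Bott-connection pieces $\pairAtiyah\nabla\bigl(a_i,\nabla^{\Bott}_{a_j}\pb(l)\bigr)=R^\nabla(a_i,[a_j,l])$ matching the mixed-bracket terms. Hence $(d^{\Bott}\pairAtiyah\nabla)(a_0,a_1)\bigl(\pb(l)\bigr)=0$, i.e.\ $\pairAtiyah\nabla$ is a $1$-cocycle.

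For independence of $\nabla$, I would take a second $L$-connection $\nabla'$ on $B$ also extending $\nabla^{\Bott}$ and set $\phi:=\nabla'-\nabla\in\Gamma(L\dual\otimes\End B)$, which vanishes on $\ia(\Gamma(A))$ and therefore descends to a section $\phi\in\Gamma(B\dual\otimes\End B)$, i.e.\ a $0$-cochain for $d^{\Bott}$. Then from the standard difference formula
\[
R^{\nabla'}(X,Y)=R^\nabla(X,Y)+\bigl[\nabla_X,\phi(Y)\bigr]-\bigl[\nabla_Y,\phi(X)\bigr]-\phi\bigl([X,Y]\bigr)+\bigl[\phi(X),\phi(Y)\bigr]
\]
I would specialize $X=a\in\Gamma(A)$, $Y=l\in\Gamma(L)$; the two terms carrying $\phi(a)=0$ and the quadratic bracket $\bigl[\phi(a),\phi(l)\bigr]$ disappear, leaving
\[
\pairAtiyah{\nabla'}\bigl(a,\pb(l)\bigr)-\pairAtiyah\nabla\bigl(a,\pb(l)\bigr)=\bigl[\nabla_a,\phi(l)\bigr]-\phi\bigl([a,l]\bigr),
\]
which is precisely $(d^{\Bott}\phi)(a)$ evaluated on $\pb(l)$. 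Thus $\pairAtiyah{\nabla'}-\pairAtiyah\nabla=d^{\Bott}\phi$ is $d^{\Bott}$-exact and the class $\alpha_{L/A}=[\pairAtiyah\nabla]\in H^1_{\CE}(A,B\dual\otimes\End B)$ is independent of $\nabla$.

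I do not expect a genuine conceptual obstacle; the whole proof is an exercise in unwinding definitions, and the only place that demands care is the Koszul-sign bookkeeping when matching $d^{\Bott}$ on $(B\dual\otimes\End B)$-valued forms against the Bianchi identity and against the curvature-difference formula. For completeness one should also recall why an $L$-connection extending $\nabla^{\Bott}$ exists (so that $\alpha_{L/A}$ is not vacuous): pick any $T_M$-connection $\nabla^B$ on $B$ and a splitting $\ib$ of $\pb$, and set $\nabla_l:=\nabla^{\Bott}_{\pa l}+\nabla^B_{\rho(\ib(\pb l))}$, which one checks directly is an $L$-connection on $B$ restricting to $\nabla^{\Bott}$ on $\ia(\Gamma(A))$.
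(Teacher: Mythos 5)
Your argument is correct and complete. Note that the paper itself offers no proof of this proposition --- it is quoted from Chen--Sti\'enon--Xu \cite{MR3439229} --- so there is nothing internal to compare against; your route (second Bianchi identity evaluated on $(a_0,a_1,l)$, with $R^\nabla|_{\Lambda^2 A}=0$ killing the $[\nabla_l,R^\nabla(a_0,a_1)]$ term, plus the curvature-difference formula with $\phi=\nabla'-\nabla$ vanishing on $\ia(\Gamma(A))$) is exactly the standard one from that reference, and your existence construction $\nabla_l=\nabla^{\Bott}_{\pa l}+\nabla^B_{\rho(\ib\pb l)}$ is the usual one as well. The only remark worth making is that your caution about Koszul signs is unnecessary here: everything in this proposition lives in ordinary (ungraded) vector bundles over $M$, so no signs arise.
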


The cocycle $\pairAtiyah\nabla$ is called the \textbf{Atiyah cocycle} of the Lie pair $(L,A)$ associated with the $L$-connection $\nabla$. The induced cohomology class $\alpha_{L/A}\in H_{\CE}^1(A,B\dual \otimes \End B)$ is called the \textbf{Atiyah class} of the Lie pair $(L,A)$.

The \textbf{Todd cocycle} of a Lie pair $(L,A)$ associated with an $L$-connection $\nabla$ extending the Bott connection is the Chevalley--Eilenberg cocycle
$$
\Toddcocycle_{L/A}^\nabla = \det\left( \dfrac{\pairAtiyah\nabla}{1-e^{-\pairAtiyah\nabla}} \right) \in \bigoplus_{k=0}^\infty \Gamma(\Lambda^k A\dual \otimes \Lambda^k B\dual).
$$
The \textbf{Todd class} of a Lie pair $(L,A)$ is 
$$
\Todd_{L/A} = \det\left( \dfrac{\alpha_{L/A}}{1-e^{-\alpha_{L/A}}} \right) \in \bigoplus_{k=0}^\infty H_{\CE}^k( A, \Lambda^k B\dual).
$$
In the case of the Lie pair $(L,A) = (T_X \otimes \CC, T_X^{0,1})$ associated with a complex manifold $X$, the Atiyah class and the Todd class of the Lie pair are, respectively, the classical Atiyah class of $T_X$ and the classical Todd class of the complex manifold $X$.

\subsection{Atiyah class and Todd class of a dg Lie algebroid}

A \textbf{($\ZZ$-)graded manifold} $\cM$ is a pair $(M,\cO_\cM)$, where $M$ is a smooth manifold, and $\cO_\cM$ is a sheaf of $\ZZ$-graded commutative $\cO_M$-algebras over $M$ such that there exist (i) a $\ZZ$-graded vector space $V$, (ii) an open cover of $M$, and (iii) an isomorphism of sheaves of graded $\cO_U$-algebras $\cO_\cM|_U \cong \cO_U \otimes SV\dual$ for every open set $U$ of the cover. 
% there exists a $\ZZ$-graded vector space $V$ and an open covering of $M$, and for every $U$ in the covering family, we have $\cO_\cM|_U \cong C^\infty(U) \otimes SV\dual$.  
 A \textbf{dg manifold} $(\cM,Q)$ is a graded manifold $\cM$ endowed with a {\em homological vector field} $Q$, i.e.\ a derivation $Q$ of degree $+1$ of $C^\infty(\cM) = \cO_\cM(M)$ satisfying $[Q,Q]=0$. 
A \textbf{morphism} of dg manifolds $\phi:(M,\cO_\cM, Q) \to  (M',\cO_{\cM'}, Q')$ is a pair $\phi = (\underline{\phi}, \Phi)$, where $\underline{\phi}:M \to M'$ is a smooth map, and $\Phi:\cO_{\cM'} \to {\underline{\phi}}_{\ast}\cO_{\cM}$ is a morphism of sheaves of graded $\cO_{M'}$-algebras, such that $(\underline{\phi}_\ast Q) \circ \Phi = \Phi \circ Q'$. One also has the notion of morphisms of graded manifolds by regarding graded manifolds as dg manifolds with zero homological vector fields. 
See, for example, \cite{2023arXiv230710242B,MR2819233,MR2709144}.

\begin{example}
Let $A \to M$ be a vector bundle. Then $A[1]$ is a graded manifold, and its function algebra is $C^\infty(A[1]) \cong \Gamma(\Lambda^\bullet A\dual)$. If $A \to M$ is a Lie algebroid, then $A[1]$, together with the Chevalley--Eilenberg differential $d_A$, forms a dg manifold. According to Va\u{\i}ntrob \cite{MR1480150}, there is a bijection between the Lie algebroid structures on the vector bundle $A \to M$ and the homological vector fields on the $\ZZ$-graded manifold $A[1]$. 
\end{example}

A \textbf{(graded) vector bundle} of rank $\{k_i\}$ over a graded manifold $\cM$ is a graded manifold $\cE$ and a {\em surjection} $\pi=(\underline \pi,\Pi):\cE \to \cM$ (i.e.\ $\underline\pi:E \to M$ is surjective and $\Pi:\cO_\cM \to \underline\pi_\ast\cO_{\cE}$ is injective) equipped with an atlas of local trivializations $\cE|_{\underline\pi\inv(U)} \cong \cM|_U \times (\bigoplus_i \RR^{k_i}[-i])$ such that the transition map between any two local trivializations is linear in the fiber coordinates. Given a graded vector bundle $\pi:\cE \to \cM$, one can shift the degrees of fibers and obtain another graded vector bundle  $\pi[j]:\cE[j] \to \cM$. We will denote this degree-shifting functor by $[j]_\cM$ when the base graded manifold is ambiguous. The section space $\Gamma(\cE)$ of $\pi:\cE\to \cM$ is defined to be $\bigoplus_{j \in \ZZ}\Gamma^j(\cE)$, where $\Gamma^j(\cM)$ consists of morphisms $s:\cM \to \cE[j]$ such that $\pi[j]\circ s = \id_\cM$.
 
A graded vector bundle $\pi:\cE \to \cM$ is called a {\bf dg vector bundle} if $\cE$ and $\cM$ are both dg manifolds, $\pi$ is a morphism of dg manifolds, and the dg structure is compatible with the vector bundle structure in the following sense: 
the subset $\Gamma(\cE\dual)$ of $C^\infty(\cE)$, consisting of the fiberwise linear functions on $\cE$, remains stable under the homological vector field $Q_\cE \in \XX(\cE)$. 
It is well-known that the global sections of a dg vector bundle form a dg module over the dg algebra of functions on the base dg manifold.  See \cite{MR2709144,MR2534186,MR3319134} for further details.   
A more general concept of ``dg fiber bundles'' (also known as $Q$-bundles) and their relationship with gauge fields can be found in \cite{MR3293862}.

A {\bf graded Lie algebroid} $\cL \to \cM$ is a Lie algebroid object in the category of graded manifolds. More explicitly, it is a graded vector bundle $\cL \to \cM$ together with a degree-zero bundle map $\rho: \cL \to T_\cM$ (the {\bf anchor}) and a degree-zero Lie bracket $[\argument, \argument]: \Gamma(\cL) \times \Gamma(\cL) \to \Gamma(\cL)$ such that 
$$
[X,fY] = \rho(X)(f) \cdot Y + (-1)^{|X||f|}f [X,Y],
$$
for any $X,Y \in \Gamma(\cL)$ and $f \in C^\infty(\cM)$. 
According to a well-known theorem of Va\u{\i}ntrob \cite{MR1480150}, the Chevalley--Eilenberg differential 
$$
d_\cL: \Gamma(\Lambda^\bullet \cL\dual) \to \Gamma(\Lambda^{\bullet + 1} \cL\dual)
$$
of the graded Lie algebroid $\cL \to \cM$ can be viewed as a homological vector field on $\cL[1]$ so that $(\cL[1], d_\cL)$ is a dg manifold. 

Assume $\cL \to \cM$ is a dg vector bundle. Since the homological vector field $Q_\cL \in \XX(\cL)$ preserves the fiberwise linear functions $\Gamma(\cL\dual)$ on $\cL$, it induces a homological vector field $\tilde\cQ_\cL$ on $\cL[1]$. The following definition is due to Mehta \cite{MR2534186}.

\begin{definition}
A \textbf{dg Lie algebroid} consists of a dg vector bundle $\cL \to \cM$ equipped with a pair of homological vector fields $Q_\cL$ and $Q_\cM$ on $\cL$ and $\cM$, respectively, and a graded Lie algebroid structure on the vector bundle $\cL \to \cM$ such that the dg and the graded Lie algebroid structures are compatible in the sense that the Chevalley--Eilenberg differential $d_\cL$ of the graded Lie algebroid structure and the homological vector field $\tilde\cQ_\cL$ on $\cL[1]$ induced by $\cQ_\cL$ --- two derivations of the graded algebra $C^\infty(\cL[1]) \cong \Gamma(\Lambda^\bullet \cL\dual)$ --- commute:
$$
[d_\cL,\tilde\cQ_\cL] = 0.
$$
\end{definition}

%As in the ordinary case, a graded Lie algebroid structure on $\cL$ corresponds to a homological vector field $d_\cL$ on $\cL[1]$. 
%A dg vector bundle structure on $\cL$ also induces a homological vector field $Q$ on $\cL[1]$. A \textbf{dg Lie algebroid} $\cL \to \cM$ is  a graded Lie algebroid together with a dg vector bundle structure on it such that the induced homological vector fields $d_\cL$ and $Q$ on $\cL[1]$ are compatible in the sense that $[d_\cL,Q] =0$. 

Note that, if $\cL \to \cM$ is a dg Lie algebroid, then the function algebra $C^\infty(\cL[1])$ with the operators $d_\cL$ and $\tilde\cQ_\cL$ form a double complex. See \cite[Section~4]{MR2534186}. 

In the following, we describe a few ways to construct dg Lie algebroids. 

\begin{example}
A fundamental example of dg Lie algebroid is the tangent bundle of a dg manifold. If $(\cM,Q)$ is a dg manifold, then the Lie derivative $L_Q$ defines a dg structure on its tangent bundle $T_\cM \to \cM$. This dg structure and the standard Lie bracket of vector fields form a dg Lie algebroid structure on $T_\cM$.
\end{example}

\begin{example}
For more sophisticated examples, one can consider double Lie algebroids \cite{1998math......8081M,MR1650045}.  According to \cite{MR2534186} (also see \cite{MR2971727}), from a double Lie algebroid 
$$
\begin{tikzcd}
D \ar[r] \ar[d] & B \ar[d] \\
A \ar[r] & M,
\end{tikzcd}
$$
one can construct two dg Lie algebroids: $D[1]_A \to B[1]$ and $D[1]_B \to A[1]$. These two dg Lie algebroids can be considered to be dual to each other. 
\end{example}

\begin{example}
Let $A \to M$ be a Lie algebroid. The graded vector bundle $T_A[1]_A \to T_M[1]$ is naturally a dg Lie algebroid \cite[Section~5.2]{MR2534186}. If $A=\frakg$ is a Lie algebra, then the double complex associated with the dg Lie algebroid $T_\frakg[1]_\frakg \to T_{\text{pt}}[1]$ is isomorphic to the Weil algebra $W(\frakg)=\Lambda^\bullet \frakg\dual \otimes S (\frakg\dual[-2])$. See \cite[Section~5.3]{MR2534186}.
% $C^\infty((T_\frakg[1]_\frakg)[1]_{\text{pt}}) \cong C^\infty(\frakg[1]\oplus \frakg[2]) \cong \Lambda^\bullet \frakg\dual \otimes S (\frakg\dual[-2]) =:W(\frakg)$ is the Weil algebra \cite[Section~5.3]{MR2534186}. 
If $A = \frakg  \ltimes M \to M$ is the action Lie algebroid, then the double complex associated with the dg Lie algebroid $T_A[1]_A \to T_M[1]$ is isomorphic to the BRST model of equivariant cohomology \cite[Example~5.12]{MR2534186}. 
\end{example}

\begin{example}
Let $(L,A)$ be a Lie pair over a manifold $M$, and $B=L/A$ be the quotient vector bundle. It is known \cite{MR4150934,MR3964152} that $\cM = L[1]\oplus B$ is a (formal) dg manifold, called a {\em Fedosov dg manifold}. Furthermore, the pullback $\cF \to \cM$ of $B\to M$ via the canonical projection $\cM \to M$ is a dg Lie subalgebroid of the tangent dg Lie algebroid $T_\cM \to \cM$. This dg Lie algebroid $\cF \to \cM$ is called a {\em Fedosov dg Lie algebroid} associated with a Lie pair \cite[Appendix~A]{MR3964152}.  See \cite{MR3754617} for a parallel construction: Fedosov dg Lie algebroids associated with dg manifolds. The construction of Fedosov dg Lie algebroid was adapted from Fedosov's iteration techniques in deformation quantization  \cite{MR1293654}. Fedosov dg Lie algebroids are important in the study of Kontsevich-type formality morphisms \cite{MR3964152,MR3754617,MR3650387,MR2102846,MR2199629} and Atiyah classes \cite{MR3724780,MR3964152}.
\end{example}

We will need dg Lie algebroids of the following type. 

\begin{proposition}\label{prop:InnerDGLieAbd}
Let $\cL \to \cM$ be a graded Lie algebroid. A section $s$ of degree $+1$ of $\cL \to \cM$ satisfying $[s,s]=0$ induces a dg Lie algebroid structure on $\cL$ with its induced differential on $\Gamma(\cL)$ being $\cQ = [s, \argument]: \Gamma(\cL) \to \Gamma(\cL)$. 
\end{proposition}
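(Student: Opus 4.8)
The plan is to verify the defining conditions of a dg Lie algebroid for the data $(\cL \to \cM, Q_\cL, Q_\cM)$ with $Q_\cM = 0$ and $Q_\cL$ the homological vector field on $\cL$ whose induced operator on sections is $\cQ = [s, \argument]$. First I would recall that a degree $+1$ section $s$ of the graded Lie algebroid $\cL \to \cM$ with $[s,s] = 0$ is precisely a Maurer--Cartan element, and that $\cQ = [s,\argument]$ is a degree $+1$ operator on $\Gamma(\cL)$. The graded Jacobi identity for $[\argument,\argument]$ gives $\cQ^2 = [s,[s,\argument]] = \tfrac12[[s,s],\argument] = 0$, so $\cQ$ is a coboundary operator. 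The Leibniz rule $[X,fY] = \rho(X)(f)\cdot Y + (-1)^{|X||f|}f[X,Y]$ applied with $X = s$ shows $\cQ(fY) = \rho(s)(f)\cdot Y + (-1)^{|f|}f\,\cQ(Y)$; since $s$ has degree $+1$ and the anchor is degree zero, $\rho(s)$ is a degree $+1$ derivation of $C^\infty(\cM)$, and $\rho(s)^2 = \tfrac12\rho([s,s]) = 0$, so $Q_\cM := \rho(s)$ is a homological vector field on $\cM$ — but in the statement we are free to take $Q_\cM = 0$ only if $\rho(s) = 0$; in general the correct base homological vector field is $Q_\cM = \rho(s)$, and I would phrase the proposition accordingly (or note that $\cQ$ is $\rho(s)$-linear, which is exactly the compatibility of $Q_\cL$ with the vector bundle structure).

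Next I would promote $\cQ$ to an honest homological vector field $Q_\cL$ on the total space $\cL$. The operator $\cQ = [s,\argument]$ preserves $\Gamma(\cL)$ and satisfies the derivation identity over $\rho(s)$, hence it extends uniquely to a degree $+1$ derivation $Q_\cL$ of $C^\infty(\cL)$ that agrees with $\rho(s)$ on $C^\infty(\cM) \subset C^\infty(\cL)$, is determined on fiberwise-linear functions $\Gamma(\cL\dual)$ by duality with $\cQ$, and preserves $\Gamma(\cL\dual)$ by construction — this is the required compatibility of the dg structure with the vector bundle structure. That $[Q_\cL,Q_\cL] = 0$ follows because it holds on the two sets of generators $C^\infty(\cM)$ and $\Gamma(\cL\dual)$: on the former it is $\rho(s)^2 = 0$, on the latter it is dual to $\cQ^2 = 0$, and a derivation vanishing on algebra generators vanishes identically. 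Thus $\cL$ and $\cM$ are dg manifolds and $\pi$ is a morphism of dg manifolds.

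The remaining point — and the one I expect to be the only genuine subtlety — is the compatibility condition $[d_\cL, \tilde\cQ_\cL] = 0$ on $C^\infty(\cL[1]) \cong \Gamma(\Lambda^\bullet\cL\dual)$, where $d_\cL$ is the Chevalley--Eilenberg differential of the graded Lie algebroid and $\tilde\cQ_\cL$ is the derivation induced by $Q_\cL$. Here I would argue that $\tilde\cQ_\cL$ is the Lie-algebroid Lie derivative $L_s$ along the section $s$ (the infinitesimal action of the inner derivation $[s,\argument]$ on Chevalley--Eilenberg cochains), and that the Cartan-type formula $L_s = [\iota_s, d_\cL]$ holds, where $\iota_s$ is contraction by $s$. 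Then $[d_\cL, L_s] = [d_\cL, [\iota_s, d_\cL]] = [[d_\cL,\iota_s], d_\cL] = [L_s, d_\cL]$ up to sign; more carefully, using the graded Jacobi identity and $[d_\cL, d_\cL] = 0$ one gets $[d_\cL, L_s] = [d_\cL,[\iota_s,d_\cl]]$ — this is cleanest to check directly by a graded-Jacobi computation, or simply by testing on the generators $C^\infty(\cM)$ and $\Gamma(\cL\dual)$ of $C^\infty(\cL[1])$ and using that $d_\cL$ restricted to these encodes the anchor and the bracket, both of which $\cQ=[s,\argument]$ differentiates compatibly because $s$ satisfies $[s,s]=0$. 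The anticipated obstacle is purely bookkeeping: keeping the Koszul signs straight between the three operators $d_\cL$, $\iota_s$, $\tilde\cQ_\cL$ of degrees $+1$, $0$ (shifted), $+1$, and making sure the identification $\tilde\cQ_\cL = [\iota_s,d_\cL]$ is correctly normalized; once that is pinned down, $[d_\cL,\tilde\cQ_\cL]=0$ is a formal consequence of $d_\cL^2=0$ and the graded Jacobi identity. I would therefore organize the proof as: (1) $\cQ^2=0$ and the Leibniz rule, identifying $Q_\cM=\rho(s)$; (2) extension to $Q_\cL$ on $\cL$ and $[Q_\cL,Q_\cL]=0$; (3) the Cartan formula $\tilde\cQ_\cL = [\iota_s,d_\cL]$ and the resulting identity $[d_\cL,\tilde\cQ_\cL]=0$.
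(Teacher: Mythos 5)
The paper offers no proof of this proposition of its own --- it defers entirely to \cite{2022arXiv221016769S} --- so there is nothing to compare against line by line; judged on its own terms, your argument is correct and complete in outline. The three steps are all sound: $\cQ^2=\tfrac12[[s,s],\argument]=0$ together with the Leibniz rule over $\rho(s)$; the dual extension of $\cQ$ to a homological vector field $Q_\cL$ on the total space, with $[Q_\cL,Q_\cL]=0$ checked on the algebra generators $C^\infty(\cM)$ and $\Gamma(\cL\dual)$; and the compatibility $[d_\cL,\tilde\cQ_\cL]=0$ via the Cartan identity $\tilde\cQ_\cL=L_s=[\iota_s,d_\cL]$ plus graded Jacobi and $d_\cL^2=0$. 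Your insistence that the base homological vector field is $Q_\cM=\rho(s)$ rather than $0$ is exactly right and is what the paper actually uses: for $s=s_{\ia}=(d_A,\widetilde{\ia})$ the anchor $\anchor(X,v)=X$ of $\pi^!L$ returns $d_A$, so the base dg manifold is $(A[1],d_A)$ as in Proposition~\ref{prop:DGLieAbdFromLiePair}. One observation worth recording: your step (3) never uses $[s,s]=0$ --- the identity $[d_\cL,[\iota_s,d_\cL]]=0$ holds for \emph{every} section $s$ by Jacobi and $d_\cL^2=0$, and the Maurer--Cartan condition enters only to make $\cQ$ (equivalently $Q_\cL$) square to zero. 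The only piece you leave implicit is the verification of the Cartan formula $\tilde\cQ_\cL=[\iota_s,d_\cL]$ itself, which, as you indicate, reduces to checking both derivations agree on the generators $C^\infty(\cM)$ and $\Gamma(\cL\dual)$ of $\Gamma(\Lambda^\bullet\cL\dual)$; this is routine and does not hide any obstacle.
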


A proof of Proposition~\ref{prop:InnerDGLieAbd} can be found in \cite{2022arXiv221016769S}.

Let $\cE \to \cM$ be a dg vector bundle, and let $\cL \to \cM$ be a dg Lie algebroid with anchor $\rho:\cL \to T_\cM$. We denote both the induced differentials on $\Gamma(\cE)$ and $\Gamma(\cL)$ by $\cQ$.  An \textbf{$\cL$-connection} on $\cE \to \cM$ is a degree-preserving map $\nabla: \Gamma(\cL) \otimes_\KK \Gamma(\cE) \to \Gamma(\cE)$ such that 
\begin{gather*}
\nabla_{f \cdot \lambda} \epsilon = f\cdot \nabla_\lambda \epsilon, \\
\nabla_\lambda (f\cdot \epsilon) = \rho(\lambda)(f) \cdot \epsilon + (-1)^{|\lambda|} f \cdot \nabla_\lambda \epsilon,
\end{gather*}
for $f \in C^\infty(\cM)$, $\lambda \in \Gamma(\cL)$, and $\epsilon \in \Gamma(\cE)$. 
Given a dg vector bundle $\cE \to \cM$  and an $\cL$-connection $\nabla$ on it, we consider the bundle map $\At^\nabla_\cE: \cL \otimes \cE \to \cE$ defined by 
$$
\At^\nabla_\cE(\lambda,\epsilon) = \cQ(\nabla_\lambda \epsilon) - \nabla_{\cQ(\lambda)} \epsilon - (-1)^{|\lambda|} \nabla_\lambda \big(\cQ(\epsilon)\big), \qquad \forall \, \lambda \in \Gamma(\cL), \, \epsilon \in \Gamma(\cE).
$$

\begin{proposition}[\cite{MR3319134}]
The bundle map $\At^\nabla_\cE$ is a degree $+1$ section of $\cL\dual \otimes \End \cE$   satisfying the cocycle equation: $\cQ(\At^\nabla_\cE) =0$. The cohomology class $\alpha_\cE \in H^1\big(\Gamma(\cL\dual \otimes \End \cE), \cQ\big)$ of $\At^\nabla_\cE$ is independent of the choice of the $\cL$-connection $\nabla$. 
\end{proposition}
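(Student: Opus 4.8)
The plan is to establish the three assertions of the proposition --- that $\At^\nabla_\cE$ is $C^\infty(\cM)$-bilinear of degree $+1$ (hence genuinely a section of $\cL\dual\otimes\End\cE$), that it is a $\cQ$-cocycle, and that its cohomology class does not depend on $\nabla$ --- by unwinding the defining formula
$$
\At^\nabla_\cE(\lambda,\epsilon) = \cQ(\nabla_\lambda\epsilon) - \nabla_{\cQ(\lambda)}\epsilon - (-1)^{|\lambda|}\nabla_\lambda(\cQ(\epsilon))
$$
and using only: (i) the graded Leibniz rules for $\nabla$ and for the degree $+1$ derivation $\cQ$ on the dg modules $\Gamma(\cL)$, $\Gamma(\cE)$ over $(C^\infty(\cM),Q_\cM)$; (ii) $\cQ\circ\cQ=0$ on $\Gamma(\cL)$ and on $\Gamma(\cE)$, which is the content of the homological conditions $[Q_\cL,Q_\cL]=0$ and $[d_\cL,\tilde\cQ_\cL]=0$; and (iii) the compatibility of the anchor with the homological vector fields, $\rho(\cQ(\lambda))(f)=Q_\cM(\rho(\lambda)(f))-(-1)^{|\lambda|}\rho(\lambda)(Q_\cM(f))$, which itself follows from $\cQ$ being a derivation of the bracket of $\cL$.

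For tensoriality and the degree I would first note that each of the three summands raises degree by one, since $\cQ$ does and $\nabla$, $\rho$ preserve degree; hence $\At^\nabla_\cE$ has degree $+1$. Substituting $f\lambda$ for $\lambda$ and expanding $\cQ(f\nabla_\lambda\epsilon)$ and $\nabla_{\cQ(f\lambda)}\epsilon$ by the Leibniz rules, the two occurrences of the term $Q_\cM(f)\,\nabla_\lambda\epsilon$ cancel, leaving $(-1)^{|f|}f\cdot\At^\nabla_\cE(\lambda,\epsilon)$ --- the correct identity for a $C^\infty(\cM)$-linear map of degree $+1$. Substituting $f\epsilon$ for $\epsilon$ proceeds the same way: after expansion, all terms in which $Q_\cM$ or $\rho(\lambda)$ is applied to $f$ cancel in pairs, the final cancellation being exactly the anchor-compatibility identity (iii). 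Thus $\At^\nabla_\cE$ descends to a bundle map $\cL\otimes\cE\to\cE$, i.e.\ a degree $+1$ section of $\cL\dual\otimes\End\cE$.

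For the cocycle equation I would recall that the differential induced by $\cQ$ on $\Gamma(\cL\dual\otimes\End\cE)\cong\Hom(\cL\otimes\cE,\cE)$ sends a degree $k$ section $\Phi$ to
$$
(\cQ\Phi)(\lambda,\epsilon)=\cQ\big(\Phi(\lambda,\epsilon)\big)-(-1)^{k}\Big(\Phi(\cQ(\lambda),\epsilon)+(-1)^{|\lambda|}\Phi(\lambda,\cQ(\epsilon))\Big).
$$
Taking $\Phi=\At^\nabla_\cE$ (so $k=1$) and inserting the defining formula, the claim $\cQ(\At^\nabla_\cE)=0$ becomes, writing $a=\cQ$ for brevity, the telescoping identity $a\,\At^\nabla_\cE(\lambda,\epsilon)+\At^\nabla_\cE(a\lambda,\epsilon)+(-1)^{|\lambda|}\At^\nabla_\cE(\lambda,a\epsilon)=0$, which I would verify by expanding the six resulting terms and cancelling them in pairs using $a^2=0$ --- no Leibniz rule or anchor enters here. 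This gives a class $\alpha_\cE\in H^1(\Gamma(\cL\dual\otimes\End\cE),\cQ)$; an $\cL$-connection exists by the usual local-triviality and partition-of-unity argument for graded vector bundles, so the class is not vacuous. Independence of $\nabla$ is then immediate: for two $\cL$-connections $\nabla,\nabla'$ the difference $\phi:=\nabla'-\nabla$ is a degree $0$ section of $\cL\dual\otimes\End\cE$, and subtracting the two defining formulas and comparing with the displayed formula for $\cQ$ applied to $\phi$ yields $\At^{\nabla'}_\cE-\At^\nabla_\cE=\cQ(\phi)$, so the cocycles are cohomologous.

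The only real obstacle is sign bookkeeping: getting the Koszul signs right in the induced differential on $\Gamma(\cL\dual\otimes\End\cE)$, in the Leibniz rule for $\nabla$, and in the anchor-compatibility identity, and making sure the correct instance of ``$\cQ^2=0$'' is invoked at each step. There is a more conceptual, sign-free alternative --- interpret an $\cL$-connection on $\cE$ as a splitting, in the category of graded vector bundles over $\cM$, of the Atiyah-type short exact sequence whose sub-bundle is $\cL\dual\otimes\End\cE$ and whose middle and base terms are dg vector bundles, so that $\At^\nabla_\cE$ is the obstruction to that splitting being a dg morphism, and read off $\cQ$-closedness and independence-up-to-coboundary from the long exact sequence in $\cQ$-cohomology (with $\alpha_\cE$ the image of $\id_\cL$ under the connecting map). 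I would mention this interpretation but carry out the direct computation above, as it is entirely self-contained given the definitions recalled in this section.
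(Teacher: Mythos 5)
Your proof is correct and is essentially the standard argument: the paper itself states this proposition without proof, citing Mehta--Sti\'enon--Xu \cite{MR3319134}, and the direct computation you outline (tensoriality by Leibniz-rule cancellation, the cocycle equation by telescoping with $\cQ^2=0$, and independence via $\At^{\nabla'}_\cE-\At^{\nabla}_\cE=\cQ(\nabla'-\nabla)$) is exactly how it is established there. Nothing further is needed.
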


The cocycle $\At^\nabla_\cE$ is called the \textbf{Atiyah cocycle} associated with the $\cL$-connection $\nabla$. The induced cohomology class $\alpha_{\cE} = [\At^\nabla_\cE] \in H^1\big(\Gamma(\cL\dual \otimes \End \cE), \cQ\big)$ is called the \textbf{Atiyah class} of the dg vector bundle $\cE \to \cM$ relative to the dg Lie algebroid $\cL \to \cM$. 
If $\cE = \cL$, we say that $\alpha_\cL$ is the Atiyah class of the dg Lie algebroid $\cL$. If $\cE = \cL=T_\cM$, we say that $\alpha_{T_\cM}$ is the Atiyah class of the dg manifold $\cM$.

The \textbf{Todd cocycle} of a dg vector bundle $\cE$ associated with an $\cL$-connection $\nabla$ is the $\cQ$-cocycle
$$
\Toddcocycle_{\cE}^\nabla = \Ber\left( \dfrac{\At^\nabla_\cE}{1-e^{-\At^\nabla_\cE}} \right) \in \prod_{k=0}^\infty \big(\Gamma(\Lambda^k \cL\dual)\big)^k,
$$
and the \textbf{Todd class} of a dg vector bundle $\cE$ relative to a dg Lie algebroid $\cL$ is 
$$
\Todd_{\cE} = \Ber\left( \dfrac{\alpha_{\cE}}{1-e^{-\alpha_{\cE}}} \right) \in \prod_{k=0}^\infty H^k(\Gamma(\Lambda^k \cL\dual), \cQ),
$$
where $\Ber$ denotes the Berezinian. 
It is well known that $\Todd_\cE$ can be expressed in terms of scalar Atiyah classes $\frac{1}{k!}(\frac{i}{2\pi})^k \str \alpha_\cE^k \in H^k(\Gamma(\Lambda^k \cL\dual),\cQ)$. Here $\str: \Lambda \cL\dual \otimes \End \cE \to \Lambda \cL\dual$ denotes the supertrace.

\section{Dg Lie algebroid associated with a Lie pair}

Let $(L,A)$ be a Lie pair over a manifold $M$, and let  $\pi_L:L \to M$ be the bundle projection. We denote by $\pi:A[1] \onto M$ the bundle projection of the shifted vector bundle $A[1]$. 
In \cite{2022arXiv221016769S}, Sti\'{e}non, Vitagliano and Xu investigated the pullback Lie algebroid $\pi^!L$ of $L \to M$ along $\pi:A[1] \to M$. They proved that $\pi^!L$ is equipped a dg Lie algebroid structure and constructed a contraction data 
$$
\begin{tikzcd}
\big(\Gamma(M,\Lambda^\bullet A\dual \otimes B) , d^{\Bott}\big)
\arrow[r, " \tau", shift left,hook] &  \big(\Gamma(A[1],\pi^!L) ,\cQ\big)
\arrow[l, " \pb", shift left,two heads] \arrow[loop, " \hpt",out=7,in= -7,looseness = 3]
\end{tikzcd},
$$ 
where $B=L/A$, $d^{\Bott}$ is the Bott differential, and $\cQ$ is induced by the dg Lie algebroid structure of $\pi^!L$.  Sti\'{e}non--Vitagliano--Xu's method is computational and heavily based on explicit formulas. Here, we give an alternative construction of this contraction by the homological perturbation lemma.

\subsection{The pullback Lie algebroid $\pi^! L$}\label{sec:pi!L}

The {\bf pullback Lie algebroid} (see \cite[Section~4.2]{MR2157566}) of $L$ via $\pi:A[1]\onto M$ is  the vector bundle 
$$
\pi^! L := T_{A[1]} \times_{T_M} L
$$
over $A[1]$ with the graded Lie algebroid structure described in the next paragraph. Note that (i) the two maps for defining the fiber product are the tangent map $\pi_\ast:T_{A[1]} \to T_M$ of  $\pi$ and the anchor $\anchorL:L \to T_M$ of $L$, (ii) $\pi^! L$ is a vector bundle over $A[1]$ whose bundle projection is the composition $ T_{A[1]} \times_{T_M} L \onto T_{A[1]} \onto A[1]$, and (iii) a general section of $\pi^! L \to A[1]$ is of the form 
\begin{equation}\label{eq:Sec_piL}
(X,v), \qquad \forall X \in \XX(A[1]), \, v \in \Gamma(\pi^*L),
\end{equation}
satisfying the condition
\begin{equation}\label{eq:SecCondition_piL}
\pi_\ast X = \anchorL \circ v: A[1] \to T_M.
\end{equation}
In the equation \eqref{eq:SecCondition_piL}, a vector field $X$ on $A[1]$ is regarded as a map $X:A[1] \to T_{A[1]}$, and a section 
$$
v \in \Gamma(\pi^* L) \cong C^\infty(A[1])\otimes_{C^\infty(M)} \Gamma(L) 
$$ 
is identified with a smooth map (not necessarily linear) $v:A[1] \to L$ such that $\pi_L \circ v = \pi$. Also note that the space $\Gamma(\pi^* L)$ is generated by 
$$
l \circ \pi, \qquad l \in \Gamma(L),
$$
as a $C^\infty(A[1])$-module.

The pullback Lie algebroid $\pi^!L$ is equipped with the anchor  
$$
\anchor: \pi^!L = T_{A[1]} \times_{T_M} L \onto T_{A[1]}, \, \anchor(X,v) = X.
$$
The Lie bracket on $\Gamma(\pi^!L)$ is characterized by the equation 
$$
[(X,l\circ \pi),(X', l' \circ \pi)]:= ([X,X'], [l,l']\circ \pi),
$$
for $X,X' \in \XX(A[1])$ and $l, l' \in \Gamma(L)$. 
More explicitly, for $l_i, l_j' \in \Gamma(L)$ and $f_i, f_j' \in C^\infty(A[1])$, we have 
\begin{multline*}
\Big[(X, \sum_i f_i \otimes l_i\circ \pi), \, (X', \sum_j f_j' \otimes l_j' \circ \pi)\Big] \\ = \Big([X,X'],\, 
 \sum_j X(f_j') \otimes l_j'  \circ \pi   
-  \sum_i (-1)^{|X'||f_i|} X'(f_i) \otimes l_i\circ \pi + \sum_{i,j} (f_if_j')\otimes [l_i,l_j'] \circ \pi \Big)
\end{multline*}
in $\Gamma(\pi^!L)$.

\subsection{Contractions induced by splittings}

By choosing a connection of $A$, one can decompose $T_A$ as the direct sum of a vertical subbundle $V\cong A \times_M A$ and a horizontal subbundle $H \cong A \times_M T_M $. Such a connection induces an isomorphism
\begin{equation}\label{eq:ConnectionDecomp}
T_{A[1]} \cong A[1] \times_M (A[1] \oplus T_M).
\end{equation}
Thus, we have 
\begin{equation}
\pi^! L = T_{A[1]} \times_{T_M} L  \cong A[1] \times_M (A[1] \oplus  L) \cong \pi^*(A[1]) \oplus  \pi^*(L),
\end{equation}
where the last direct sum is a direct sum of vector bundles over $A[1]$. As a consequence, we have
\begin{equation}\label{eq:TrivializedPullbackAbd}
\Gamma(\pi^! L) \cong \Gamma(\pi^* A[1])  \oplus \Gamma(\pi^* L)
\end{equation}
as $C^\infty(A[1])$-modules.

\begin{remark}\label{rmk:TwoExpressionpi!L}
In \eqref{eq:Sec_piL}, we describe a general section of $\pi^! L \to A[1]$ by a pair $(X,v)$ of a vector field $X \in \XX(A[1])$ and a section $v \in \Gamma(\pi^* L)$ which satisfies \eqref{eq:SecCondition_piL}. By choosing a connection, one has a horizontal subbundle $H$ in $T_{A[1]}$ which is isomorphic to $\pi^*T_M$ via $\pi_\ast$.  Let $\psi :\pi^* T_M  \to H $ be the inverse of $\pi_\ast$. Then an element $(x,v) \in \Gamma(\pi^* A[1])  \oplus \Gamma(\pi^* L)$ corresponds to the pair $\big(x+ \psi\big((\id\otimes\anchorL)(v)\big), v\big)\in \Gamma(\pi^!L) $, where $x \in \Gamma(\pi^* A[1])$ is identified with its associated vertical vector field on $A[1]$, $v \in \Gamma(\pi^*L) \cong C^\infty(A[1]) \otimes \Gamma(L)$, and $\id \otimes \anchorL:C^\infty(A[1]) \otimes \Gamma(L) \to C^\infty(A[1]) \otimes \XX(M) \cong \Gamma(\pi^* T_M)$.
\end{remark}

Note that if $W$ is a vector space, and if $V$ is a subspace of $W$,
then the two-term complex $0 \to V \into W \to 0$ is homotopy equivalent
to the quotient space $W/V$. A choice of splitting of
\begin{equation}\label{saturn}
0 \to V \into W \onto W/V \to 0
\end{equation}
induces a homotopy inverse $i:W/V \to W$ of the quotient map $W \onto W/V$
and a homotopy operator $p:W \to V$
\begin{equation}\label{eq:VecSpContraction}
\begin{tikzcd}
V \ar[r,hook] \ar[d,shift right] &\ar[l,bend right,"p"'] W \ar[d,shift right,two heads] \\
0 \ar[u,shift right] \ar[r] & W/V \ar[u,shift right,"i"'].
\end{tikzcd}
\end{equation}
In the following, we apply this observation fiberwisely to the pullback bundles $\pi^*A \subset \pi^*L$ with proper degree-shifting, and we obtain a homotopy equivalence between $\Gamma(\pi^!L)\cong \Gamma(\pi^*A[1]) \oplus \Gamma(\pi^*L)$ with the differential induced by the inclusion map and $\Gamma\big(\pi^*(L/A)\big) \cong \Gamma\big(\Lambda^\bullet A\dual \otimes (L/A)\big)$ with the zero differential.

Let $B \to M$ be the quotient vector bundle $L/A$. 
Let $\ia: A \into L$ be the inclusion map, and $\pb:L \onto B$ be the projection map. For simplicity, we also denote the induced inclusion $\ia: \pi^* A \into \pi^* L$ and projection $\pb: \pi^*L \onto \pi^*B$ by the same notations. 
Let $\pa   :\pi^* L \to \pi^* A$ be a splitting of the short exact sequence
\begin{equation}\label{eq:splitting}
\begin{tikzcd}
0 \ar[r] & \pi^* A \ar[r, hook, "\ia"'] & \ar[l, bend right,dashed,"\pa"'] \pi^*L \ar[r, two heads,"\pb"'] & \ar[l, bend right,dashed,"\ib"'] \pi^*B \ar[r] & 0,
\end{tikzcd}
\end{equation}
and let $\ib: \pi^* B \to \pi^* L$ be the inclusion map such that $\pb \ib = \id$ and $\ia \pa + \ib \pb = \id$. 

Let $\incl: \Gamma(\pi^! L) \to \Gamma(\pi^! L)$ be the $C^\infty(A[1])$-linear operator
$$
\incl(x,v) := \big(0, \ia(\pshift(x))\big)
$$
of degree $+1$, and let $\hpt:\Gamma(\pi^! L) \to \Gamma(\pi^! L)$ be the  $C^\infty(A[1])$-linear operator
$$
\hpt(x,v) := \big(\nshift (\pa(v)), 0\big)
$$
of degree $-1$, 
where $(x,v) \in \Gamma(\pi^* A[1])  \oplus \Gamma(\pi^* L)\cong \Gamma(\pi^! L)$, $\pshift: \Gamma(\pi^* A[1]) \to \Gamma(\pi^* A)$ is the degree-shifting map of degree $+1$, and $\nshift: \Gamma(\pi^* A) \to \Gamma(\pi^* A[1])$ is the degree-shifting map of degree $-1$. Since $|\incl| =1$ and $|\hpt| = -1$, one has  
$$
\incl(f \cdot \lambda) = (-1)^{|f|} f \cdot \incl(\lambda), \qquad \hpt(f \cdot \lambda) = (-1)^{|f|} f \cdot \hpt(\lambda),
$$
for $f \in C^\infty(A[1])$ and $\lambda \in \Gamma(\pi^! L)$.
Also, note that the pairs $( \Gamma(\pi^! L), \incl )$ and  $( \Gamma(\pi^* B), 0)$ are  dg modules over $(C^\infty(A[1]), 0)$, and the projection map $\pb:( \Gamma(\pi^! L), \incl ) \to ( \Gamma(\pi^* B), 0)$ forms a homotopy equivalence with the homotopy inverse $\ib$ and the homotopy operator $\hpt$.
$$
\begin{tikzcd}
 \Gamma(\pi^*A[1])\ar[d, shift right] \ar[r, hook, "\incl"'] & \ar[l, bend right, "\hpt"'] \Gamma(\pi^*L) \ar[d, two heads, shift right,"\pb"'] \\
 0 \ar[u, shift right] \ar[r] & \Gamma(\pi^*B) \ar[u, shift right,"\ib"']
\end{tikzcd}
$$

The following proposition is straightforward.

\begin{proposition}\label{prop:BasicContractionLiePair}
The diagram 
$$
\begin{tikzcd}
\big(\Gamma(\pi^* B),0\big)
\arrow[r, " \ib", shift left,hook] &  \big(\Gamma(\pi^! L), \incl\big)
\arrow[l, " \pb", shift left,two heads] \arrow[loop, "\hpt",out=12,in= -12,looseness = 3]
\end{tikzcd}
$$
forms a contraction data over $(C^\infty(A[1]),0)$. 
\end{proposition}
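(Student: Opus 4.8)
Looking at Proposition~\ref{prop:BasicContractionLiePair}, the claim is that the data
$$
\begin{tikzcd}
\big(\Gamma(\pi^* B),0\big)
\arrow[r, " \ib", shift left,hook] &  \big(\Gamma(\pi^! L), \incl\big)
\arrow[l, " \pb", shift left,two heads] \arrow[loop, "\hpt",out=12,in= -12,looseness = 3]
\end{tikzcd}
$$
is a contraction over $(C^\infty(A[1]),0)$. So I need to show: (1) all three maps $\ib,\pb,\hpt$ are $C^\infty(A[1])$-linear (with appropriate signs, as noted for $\hpt$); (2) $\pb$ and $\ib$ are chain maps for the differentials $\incl$ on the big space and $0$ on the small space, and $\hpt$ is a homotopy; (3) the side conditions $\pb\ib = \id$, $\incl\hpt + \hpt\incl = \id - \ib\pb$, and the annihilation conditions $\pb\hpt = 0$, $\hpt\ib = 0$, $\hpt^2 = 0$ hold.

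The plan is to work in the splitting-induced decomposition $\Gamma(\pi^!L) \cong \Gamma(\pi^*A[1]) \oplus \Gamma(\pi^*L)$ and simply compute each identity from the explicit formulas $\incl(x,v) = (0,\ia(\pshift x))$ and $\hpt(x,v) = (\nshift(\pa v),0)$, together with $\pb(x,v) = \pb(v)$ and $\ib(\beta) = (0,\ib(\beta))$. The $C^\infty(A[1])$-linearity of $\ib$ and $\pb$ is immediate since they are degree $0$ and pulled back from bundle maps; the sign-twisted linearity of $\incl$ and $\hpt$ follows from $|\incl|=1$, $|\hpt|=-1$ exactly as already recorded in the text preceding the proposition. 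That $\pb$ and $\ib$ are chain maps is trivial: the target differential is $0$, and $\pb\incl(x,v) = \pb(\ia(\pshift x)) = 0$ because $\pb\ia = 0$, while $\incl\ib(\beta) = \incl(0,\ib(\beta)) = (0,\ia(\pshift(0))) = 0$.

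For the contracting-homotopy identity, compute $(\incl\hpt + \hpt\incl)(x,v)$. We have $\hpt(x,v) = (\nshift(\pa v),0)$, so $\incl\hpt(x,v) = (0,\ia(\pshift\nshift(\pa v))) = (0,\ia\pa(v))$ since $\pshift\nshift = \id$. And $\incl(x,v) = (0,\ia(\pshift x))$, so $\hpt\incl(x,v) = (\nshift(\pa\ia(\pshift x)),0) = (\nshift\pshift(x),0) = (x,0)$ using $\pa\ia = \id$ and $\nshift\pshift = \id$. Hence the sum is $(x, \ia\pa(v))$. On the other hand $(\id - \ib\pb)(x,v) = (x,v) - (0,\ib\pb(v)) = (x, v - \ib\pb(v)) = (x,\ia\pa(v))$ by the splitting identity $\ia\pa + \ib\pb = \id$. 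These agree, as required. The remaining side conditions are equally direct: $\pb\ib(\beta) = \pb\ib(\beta) = \beta$ since $\pb\ib = \id$; $\pb\hpt(x,v) = \pb(\nshift(\pa v),0) = 0$ since the $\pi^*L$-component is zero; $\hpt\ib(\beta) = \hpt(0,\ib(\beta)) = (\nshift(\pa\ib(\beta)),0) = 0$ since $\pa\ib = 0$; and $\hpt^2(x,v) = \hpt(\nshift(\pa v),0) = (\nshift(\pa(0)),0) = 0$.

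There is no real obstacle here — every identity reduces to one of the four splitting relations $\pb\ia = 0$, $\pa\ia = \id$, $\pb\ib = \id$, $\pa\ib = 0$ (equivalently $\ia\pa + \ib\pb = \id$) combined with $\pshift\nshift = \id$ and $\nshift\pshift = \id$ on the degree-shift maps. The only point demanding a modicum of care is bookkeeping the Koszul signs in the $C^\infty(A[1])$-linearity of $\incl$ and $\hpt$, but those have already been stated in the paragraph preceding the proposition, so the verification genuinely is, as the paper says, straightforward. I would present it as a one-paragraph computation of the five identities in the decomposition, citing the splitting relations.
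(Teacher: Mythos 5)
Your verification is correct: every identity reduces, exactly as you say, to the splitting relations $\pa\ia=\id$, $\pb\ib=\id$, $\ia\pa+\ib\pb=\id$ (hence $\pb\ia=0$, $\pa\ib=0$) together with $\pshift\nshift=\id$ and $\nshift\pshift=\id$, and your sign bookkeeping matches the conventions stated just before the proposition. The paper offers no proof beyond declaring the result straightforward, and your direct componentwise computation is precisely the intended argument.
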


\subsection{Dg Lie algebroid structures on $\pi^!L$}\label{sec:DGLieAbd}

Let $d_A:C^\infty(A[1]) \to C^\infty(A[1])$ be the Chevalley--Eilenberg differential. The following lemma was proved in \cite{2022arXiv221016769S}.

\begin{lemma}[\cite{2022arXiv221016769S}]\label{lem:SecFromLieAbdMor}
If $\phi:A \to L$ is a Lie algebroid morphism, then the pair 
$$
s_\phi:=(d_A, \tilde\phi) 
$$ 
defines a section of $\pi^!L \to A[1]$, following the description of sections as given in \eqref{eq:Sec_piL}, where $\tilde\phi:= \phi \circ \pshift: A[1] \to A \to L$. This section satisfies the property 
$$
[s_\phi,s_\phi]=0.
$$
\end{lemma}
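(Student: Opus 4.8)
The plan is to verify the two assertions — that $s_\phi=(d_A,\tilde\phi)$ is genuinely a section of $\pi^!L$ and that it squares to zero — by a direct computation in a local frame; since every map in play is tensorial, a local check suffices and no patching is needed. Fix local coordinates $x^i$ on $M$ and a local frame $e_\alpha$ of $A$ with dual frame $\eta^\alpha\in\Gamma(A\dual)\subset C^\infty(A[1])$ of degree $+1$. In this frame $\tilde\phi=\phi\circ\pshift$ is represented by the section $\sum_\alpha\eta^\alpha\otimes\big(\phi(e_\alpha)\circ\pi\big)$ of $\pi^*L$, where $\phi(e_\alpha)\in\Gamma(L)$, and $d_A$ has the local form $d_A=\sum_{i,\alpha}\anchorA(e_\alpha)(x^i)\,\eta^\alpha\,\partial_{x^i}-\tfrac12\sum_{\mu,\nu,\gamma}c^\gamma_{\mu\nu}\,\eta^\mu\eta^\nu\,\partial_{\eta^\gamma}$, with $c^\gamma_{\mu\nu}$ the structure functions of $A$, so that $d_A\eta^\gamma=-\tfrac12\sum_{\mu,\nu}c^\gamma_{\mu\nu}\eta^\mu\eta^\nu$ and $[e_\mu,e_\nu]_A=\sum_\gamma c^\gamma_{\mu\nu}e_\gamma$.

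For the section condition I would check \eqref{eq:SecCondition_piL}, i.e.\ $\pi_\ast d_A=\anchorL\circ\tilde\phi$ as maps $A[1]\to T_M$. Applying $\pi_\ast$ deletes the $\partial_{\eta^\gamma}$-component of $d_A$ and sends $\partial_{x^i}\mapsto\partial_{x^i}$, leaving $\sum_{i,\alpha}\anchorA(e_\alpha)(x^i)\,\eta^\alpha\,\partial_{x^i}$; on the other side $\anchorL\circ\tilde\phi=\sum_\alpha\eta^\alpha\,\anchorL\!\big(\phi(e_\alpha)\big)$. These coincide exactly because a Lie algebroid morphism intertwines anchors, $\anchorL\circ\phi=\anchorA$. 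In particular $\pi_L\circ\tilde\phi=\pi$, so $\tilde\phi$ is genuinely a section of $\pi^*L$ and $s_\phi$ is a bona fide section of $\pi^!L$ in the sense of \eqref{eq:Sec_piL}.

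For the vanishing $[s_\phi,s_\phi]=0$ I would expand the bracket using the explicit formula of Section~\ref{sec:pi!L}, with $X=X'=d_A$ and $v=v'=\sum_\alpha\eta^\alpha\otimes\big(\phi(e_\alpha)\circ\pi\big)$. The $T_{A[1]}$-component is $[d_A,d_A]=0$, since $d_A$ is a homological vector field. In the $\pi^*L$-component, the first two terms of the formula are $\sum_\beta d_A(\eta^\beta)\otimes\phi(e_\beta)\circ\pi$ and $-\sum_\alpha(-1)^{|d_A||\eta^\alpha|}d_A(\eta^\alpha)\otimes\phi(e_\alpha)\circ\pi$; since $|d_A|=|\eta^\alpha|=1$ these add up to $2\sum_\alpha d_A(\eta^\alpha)\otimes\phi(e_\alpha)\circ\pi$, which after substituting $d_A\eta^\alpha=-\tfrac12\sum_{\mu,\nu}c^\alpha_{\mu\nu}\eta^\mu\eta^\nu$ becomes $-\sum_{\mu,\nu}\eta^\mu\eta^\nu\otimes\phi\big([e_\mu,e_\nu]_A\big)\circ\pi$. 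This cancels the third term $\sum_{\mu,\nu}\eta^\mu\eta^\nu\otimes[\phi(e_\mu),\phi(e_\nu)]_L\circ\pi$ precisely because $\phi$ preserves Lie brackets, $\phi([e_\mu,e_\nu]_A)=[\phi(e_\mu),\phi(e_\nu)]_L$. Hence the $\pi^*L$-component vanishes as well, and $[s_\phi,s_\phi]=0$.

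The only delicate point is the sign bookkeeping in the last step: one has to carry the Koszul signs built into the bracket on $\Gamma(\pi^!L)$ together with the odd degrees of $d_A$ and of the fibre coordinates $\eta^\alpha$, and it is exactly the matching of these signs — with consistent sign conventions for $d_A$ and $\pshift$ — that produces the factor $2$ needed to identify $2\sum_\alpha d_A(\eta^\alpha)\otimes\phi(e_\alpha)$ with $-\sum_{\mu,\nu}\eta^\mu\eta^\nu\otimes\phi([e_\mu,e_\nu]_A)$. A more conceptual alternative would be to note that, by Proposition~\ref{prop:InnerDGLieAbd}, a degree $+1$ section $s$ of $\pi^!L$ with $[s,s]=0$ is the same datum as a dg Lie algebroid structure on $\pi^!L$ over a homological vector field on $A[1]$ together with a covering morphism to $L$, and that the identity above is merely the translation of ``$\phi$ is a Lie algebroid morphism'' into this language; the direct local computation is, however, the shortest route.
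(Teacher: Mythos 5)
Your proof is correct. The paper itself does not prove this lemma --- it defers to the cited work of Sti\'enon--Vitagliano--Xu --- but your direct local verification checks out against the paper's own formulas: the anchor compatibility $\anchorL\circ\phi=\anchorA$ gives the section condition \eqref{eq:SecCondition_piL}, and in the bracket formula of Section~\ref{sec:pi!L} the two derivative terms indeed combine (via $(-1)^{|d_A||\eta^\alpha|}=-1$) into $2\sum_\alpha d_A(\eta^\alpha)\otimes\phi(e_\alpha)\circ\pi=-\sum_{\mu,\nu}\eta^\mu\eta^\nu\otimes\phi([e_\mu,e_\nu]_A)\circ\pi$, which cancels the term $\sum_{\mu,\nu}\eta^\mu\eta^\nu\otimes[\phi(e_\mu),\phi(e_\nu)]_L\circ\pi$ precisely because $\phi$ preserves brackets.
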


In particular, the inclusion map $\ia:A \into L$ induces a section $s_{\ia} \in \Gamma(\pi^!L)$ with the property $[s_{\ia},s_{\ia}] =0$. We denote  
$$
\cQ:=[s_{\ia},\argument]:\Gamma(\pi^!L) \to \Gamma(\pi^!L)
$$
which is an operator of degree $+1$ such that $\cQ^2=0$. By Proposition~\ref{prop:InnerDGLieAbd}, we have 

\begin{proposition}[\cite{2022arXiv221016769S}]\label{prop:DGLieAbdFromLiePair}
The pullback Lie algebroid $\pi^!L$ is a dg Lie algebroid over the dg manifold $(A[1],d_A)$ whose global sections are equipped with the differential $\cQ$.
\end{proposition}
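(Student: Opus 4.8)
The plan is to combine the two facts that were just established: Lemma~\ref{lem:SecFromLieAbdMor}, which says that the inclusion $\ia:A\into L$ produces a section $s_{\ia}\in\Gamma(\pi^!L)$ of degree $+1$ with $[s_{\ia},s_{\ia}]=0$, and Proposition~\ref{prop:InnerDGLieAbd}, which says that any degree $+1$ section $s$ of a graded Lie algebroid with $[s,s]=0$ endows that algebroid with a dg Lie algebroid structure whose induced differential on sections is $[s,-]$. First I would recall that $\pi^!L\to A[1]$ is a graded Lie algebroid over the graded manifold $A[1]$, as described in Section~\ref{sec:pi!L}. Then I would simply invoke Proposition~\ref{prop:InnerDGLieAbd} with $s=s_{\ia}$: this immediately yields a dg Lie algebroid structure on $\pi^!L$ whose differential on $\Gamma(\pi^!L)$ is $\cQ=[s_{\ia},-]$, and in particular $\cQ^2=0$ follows from the graded Jacobi identity together with $[s_{\ia},s_{\ia}]=0$.

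The one genuinely substantive point to verify is that the homological vector field on the base graded manifold $A[1]$ produced by this construction is exactly the Chevalley--Eilenberg differential $d_A$ of the Lie algebroid $A$. The mechanism here is that the anchor $\anchor:\pi^!L\to T_{A[1]}$ sends the section $s_{\ia}=(d_A,\widetilde{\ia})$ to its first component, the vector field $d_A\in\XX(A[1])$; since the homological vector field on the base induced by an inner differential $[s,-]$ on a Lie algebroid is precisely $\anchor(s)$, we get $Q_{A[1]}=\anchor(s_{\ia})=d_A$. Thus the compatibility condition in the definition of dg Lie algebroid --- commutativity of $d_{\pi^!L}$ with the induced $\widetilde{\cQ}_{\pi^!L}$ on $(\pi^!L)[1]$, equivalently $[\cQ,\cQ]=0$ --- is guaranteed by Proposition~\ref{prop:InnerDGLieAbd}, and the fact that $(A[1],d_A)$ is a dg manifold is the content of Va\u{\i}ntrob's theorem recalled earlier.

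The main (and really only) obstacle is bookkeeping rather than mathematical depth: one must track that the ``base'' homological vector field extracted from $s_{\ia}$ is $d_A$ and not some other derivation, which amounts to unwinding the definition of $s_{\ia}$ in terms of \eqref{eq:Sec_piL} and the fact that $\ia$ is a Lie algebroid morphism (so $\anchorL\circ\ia = \anchor_A$ and the CE differentials are compatible). Once this identification is in hand the proposition is a direct citation of Proposition~\ref{prop:InnerDGLieAbd}, and indeed the statement attributes the result to \cite{2022arXiv221016769S}, so I would keep the argument to a short paragraph pointing at Lemma~\ref{lem:SecFromLieAbdMor}, Proposition~\ref{prop:InnerDGLieAbd}, and the anchor computation $\anchor(s_{\ia})=d_A$.
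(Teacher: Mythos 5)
Your proposal is correct and follows the paper's own route exactly: the paper derives Proposition~\ref{prop:DGLieAbdFromLiePair} by combining Lemma~\ref{lem:SecFromLieAbdMor} (giving the section $s_{\ia}$ with $[s_{\ia},s_{\ia}]=0$) with Proposition~\ref{prop:InnerDGLieAbd}. Your extra remark that $\anchor(s_{\ia})=d_A$, identifying the base homological vector field, is a correct and welcome piece of bookkeeping that the paper leaves implicit.
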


\begin{remark}[Local formulas]\label{rmk:LocFormula}
Here, we choose a trivialization \eqref{eq:TrivializedPullbackAbd} and a splitting \eqref{eq:splitting}. Let $x^1, \cdots, x^n$ be a local coordinate system on $M$, and let $e_1, \cdots, e_r$ be a local frame of $A \to M$ that extends to a local frame $e_1, \cdots e_{r+r'}$ of $L \to M$ so that $e_j = \ib\pb(e_j)$ for $j=r+1, \cdots, r+r'$. 
We also denote the induced local frame of $\pi^*L$  by the same notations, $e_1, \cdots, e_{r+r'}$.

In addition, let $\eta^1, \cdots, \eta^r$ be the corresponding local frame of $(A[1])\dual \to M$, and let $\peta 1, \cdots, \peta r \in \Gamma(T_{A[1]}^{\ver}) \cong\Gamma(\pi^* A[1])$ be the corresponding local vertical vector fields. 
More explicitly, we choose $\eta^i$ and $\peta  j$ so that 
$$
\peta j (\eta^i) = \delta_j^i, \qquad \text{and} \qquad \hpt(e_j) = \peta j,
$$
for $i,j = 1, \cdots, r$. 

Let $\rho_i = \anchorL(e_i) = \sum_{j=1}^n \rho_i^j(x) \px{j}$ and $[e_i,e_j] = \sum_{k=1}^{r+r'} c_{ij}^k (x) e_k$.  
We have the local formula for the Chevalley--Eilenberg differential: 
\begin{equation}\label{eq:CE_local}
d_A = \sum_{i=1}^r \sum_{j=1}^n \rho_i^j \eta^i \px{j} - \frac{1}{2} \sum_{i,j,k =1}^r c_{ij}^k \eta^i \eta^j  \peta{k},
\end{equation}
where $\px j$ are regarded as horizontal vector fields on $A[1]$ via \eqref{eq:ConnectionDecomp}. 
Furthermore, in $\Gamma(\pi^* A[1])  \oplus \Gamma(\pi^*L)$, we have 
\begin{align}
\cQ\Big(\peta{l}\Big) & = \sum_{i,k =1}^r c_{il}^k \eta^i \peta k + e_l   , \label{eq:cQ_A[1]} \\
\cQ (e_l) &  =  \frac{1}{2}  \sum_{i,j,k=1}^r \rho_l(c_{ij}^k) \eta^i \eta^j \peta k  +  \sum_{k=1}^{r+r'} \sum_{i=1}^r \eta^i c_{il}^k e_k  . \label{eq:cQ_L}
\end{align}
\end{remark}

According to Proposition~\ref{prop:BasicContractionLiePair},   we have the contraction $(\Gamma(\pi^! L), \incl; \hpt)$. Let 
$$
\partial = \cQ - \incl, 
$$
and 
$$
F^q(\pi^! L) = \begin{cases}
\Gamma(\pi^! L) \cong \Gamma(\pi^* A[1])\oplus \Gamma(\pi^* L), & \text{ if } q \leq 0, \\
\Gamma(\pi^*A[1]), & \text{ if } q =1, \\
0, & \text{ if } q \geq 2.
\end{cases}
$$
It is clear that $F$ is an exhaustive complete filtration.

\begin{lemma}
The operator $\partial$ is a perturbation of $(\Gamma(\pi^! L), \incl)$ over $\KK$, and it satisfies the property 
$$
(\partial \hpt)\big(F^q(\pi^! L)\big) \subset F^{q+1}(\pi^! L)
$$ 
for all $q$. 
\end{lemma}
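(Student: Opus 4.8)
The plan is to verify the two assertions separately, both by reducing to the explicit local description in Remark~\ref{rmk:LocFormula}.

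First, to see that $\partial = \cQ - \incl$ is a perturbation of $(\Gamma(\pi^! L),\incl)$ over $\KK$, I recall that "perturbation" means a degree $+1$ operator $\partial$ with $(\incl + \partial)^2 = 0$, i.e.\ $\partial^2 + \incl\partial + \partial\incl = 0$. Since $\cQ = \incl + \partial$ and $\cQ^2 = 0$ (Proposition~\ref{prop:DGLieAbdFromLiePair}) and $\incl^2 = 0$ (it lands in $\Gamma(\pi^*L)$ composed with itself is zero because $\pshift$ of something of the form $\ia(\pshift(x))$ vanishes — the image of $\incl$ lies in the $\pi^*L$-summand while $\incl$ reads only the $\pi^*A[1]$-summand), the identity $(\incl+\partial)^2 = \incl^2 + (\incl\partial+\partial\incl+\partial^2)$ forces $\partial$ to be a perturbation. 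I should also note that $\partial$ is only $\KK$-linear, not $C^\infty(A[1])$-linear, since $\cQ = [s_{\ia},\argument]$ involves the anchor and hence a genuine derivation part; this is why the homological perturbation lemma must be applied over $\KK$ rather than over $C^\infty(A[1])$.

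Second, for the filtration estimate, I would compute $(\partial\hpt)$ on each graded piece of $F$ using the local formulas. On $F^q$ for $q \le 0$ there is nothing to prove beyond $(\partial\hpt)(F^q) \subset F^{q+1} \subseteq F^0 = \Gamma(\pi^!L)$, which is automatic. The only nontrivial cases are $q = 1$ and $q = 2$. For $q = 2$, $F^2 = 0$, so the inclusion is trivial. The crucial case is $q = 1$: here $F^1(\pi^!L) = \Gamma(\pi^*A[1])$, and I must show $(\partial\hpt)\big(\Gamma(\pi^*A[1])\big) \subset F^2(\pi^!L) = 0$, i.e.\ $\partial\hpt$ kills the vertical summand. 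Now on $\Gamma(\pi^*A[1])$, by definition $\hpt(x,0) = (\nshift(\pa(0)),0) = 0$ — wait, more carefully: an element of the $\pi^*A[1]$-summand, viewed inside $\Gamma(\pi^!L) \cong \Gamma(\pi^*A[1])\oplus\Gamma(\pi^*L)$, is of the form $(x,v)$ where the $\pi^*L$-component is determined by the anchor constraint; applying $\hpt(x,v) = (\nshift(\pa(v)),0)$ and then $\partial$, I track degrees: $\hpt$ has degree $-1$, $\partial$ has degree $+1$, so $\partial\hpt$ preserves degree but I need the sharper statement that it decreases filtration. The cleanest route is to use the local formula \eqref{eq:cQ_A[1]}: $\hpt(\peta{l}) = \peta{l}$ is false — rather, $\hpt$ on the image of $e_l$ gives $\peta{l}$, and $\hpt$ vanishes on $\peta{l}$ itself since its $\pi^*L$-component under the splitting lies in $\pi^*A$ and $\pa\ia = \id$ makes $\hpt(\peta l)$ land back... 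Let me instead argue structurally: $\hpt$ factors through projection to $\Gamma(\pi^*A) \hookrightarrow \Gamma(\pi^*L)$ then $\nshift$ into $\Gamma(\pi^*A[1])$, so $\im\hpt = \Gamma(\pi^*A[1]) = F^1$, and we need $\partial(F^1) \subset F^2 = 0$, i.e.\ $\partial$ vanishes on $\Gamma(\pi^*A[1])$. But $\cQ(\peta l) = \sum c_{il}^k\eta^i\peta k + e_l$ by \eqref{eq:cQ_A[1]}, while $\incl(\peta l) = (0,\ia(\pshift(\peta l))) = (0, e_l)$ since $\pshift(\peta l)$ is the $l$-th basis vector of $A$ which maps under $\ia$ to $e_l \in \pi^*L$; hence $\partial(\peta l) = \cQ(\peta l) - \incl(\peta l) = \sum c_{il}^k \eta^i\peta k \in F^1$, not $F^2$. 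So the naive claim "$\partial(F^1)\subset F^2$" is false, and the correct statement is the stated one: $(\partial\hpt)(F^q) \subset F^{q+1}$, and for $q=1$ I actually only need $(\partial\hpt)(F^1)\subset F^2 = 0$. Resolving this requires computing $\partial\hpt$ on $\Gamma(\pi^*A[1])$, where $\hpt(\peta l) = 0$ because the $\pi^*L$-component of the section $\peta l \in \Gamma(\pi^*A[1])$ is zero under the direct-sum decomposition, so $\pa$ of it is zero. Therefore $\partial\hpt$ annihilates $F^1$ trivially, and the estimate on $F^1$ is immediate; on $F^q$ with $q \le 0$ one checks $(\partial\hpt)$ raises filtration by one by inspecting the local formulas \eqref{eq:cQ_A[1]}–\eqref{eq:cQ_L} term by term, noting that $\hpt$ always produces something in $F^1 = \Gamma(\pi^*A[1])$ on which $\partial$ acts with a controlled filtration jump.

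The main obstacle I anticipate is bookkeeping the interaction between the degree grading (the cohomological grading on $\Gamma(\pi^!L)$, which comes from the $\Lambda^\bullet A^\vee$-weight) and the auxiliary filtration $F$ (which is not the degree filtration — it is a two-step-plus filtration isolating the $\pi^*A[1]$-summand in positive filtration degree). One has to be careful that $\incl$ itself is filtration-compatible in the right way: $\incl$ sends $F^q$ to $F^{q+1}$? No — $\incl(\peta l) = e_l \in F^0$, so $\incl$ sends $F^1$ into $F^0$, which is \emph{not} raising filtration. Thus the perturbation $\partial = \cQ - \incl$ absorbs the "bad" filtration behaviour of $\incl$ against that of $\cQ$, and the content of the lemma is precisely that the composite $\partial\hpt$ — unlike $\partial$ or $\hpt$ individually — does raise filtration, which is exactly the hypothesis needed to invoke the complete-filtration form of the homological perturbation lemma (Theorem~\ref{thm:HPLCptForm}) in the next step. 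So the proof is: (i) $\partial$ is a perturbation, immediate from $\cQ^2 = \incl^2 = 0$; (ii) the filtration estimate, by a short computation on generators using \eqref{eq:cQ_A[1]} and \eqref{eq:cQ_L}, the key point being that $\hpt$ lands in $\Gamma(\pi^*A[1])$ and $\partial = \cQ - \incl$ restricted to the relevant pieces has its leading (filtration-lowering) term exactly cancelled.
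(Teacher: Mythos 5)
Your plan is essentially the paper's argument: verify $(\partial\hpt)$ on the local generators $\peta{l}$ and $e_l$ via \eqref{eq:cQ_A[1]}--\eqref{eq:cQ_L}, using the two key facts that $\hpt$ vanishes on $\Gamma(\pi^*A[1])$ (so the $q=1$ case gives $0\in F^2$) and that $\partial\big(\peta{l}\big)=\sum_{i,k}c_{il}^k\eta^i\peta{k}\in\Gamma(\pi^*A[1])$ (so $(\partial\hpt)(e_l)\in F^1$); and the perturbation claim is indeed immediate from $(\incl+\partial)^2=\cQ^2=0$. Two corrections, though. First, your mid-proof assertion that ``the only nontrivial cases are $q=1$ and $q=2$'' is false: $F^1=\Gamma(\pi^*A[1])$ is a \emph{proper} subspace of $F^0=\Gamma(\pi^!L)$, so the $q=0$ case, namely $(\partial\hpt)\big(\Gamma(\pi^!L)\big)\subset\Gamma(\pi^*A[1])$, is the other substantive half of the lemma; you do come back to it in your final sentence, but the earlier dismissal should be struck. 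Second, since $\partial$ is only $\KK$-linear, passing from the frame elements $\peta{l},e_l$ to arbitrary sections $f\cdot\lambda$ requires a justification that ``inspecting local formulas term by term'' glosses over; the clean statement (made explicit in the paper) is the Leibniz-type identity $(\partial\hpt)(f\cdot\lambda)=(-1)^{|f|}d_A(f)\cdot\hpt(\lambda)+f\cdot(\partial\hpt)(\lambda)$, which holds because $\incl$ and $\hpt$ are $C^\infty(A[1])$-linear while $\cQ$ is a derivation over $d_A$, and which suffices since both terms on the right lie in $F^1$ ($\im\hpt\subset F^1$ and $F^1$ is a $C^\infty(A[1])$-submodule). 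With those two points tightened, your argument coincides with the paper's proof.
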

\begin{proof}
Since $\incl$ and $\hpt$ are $C^\infty(A[1])$-linear, the derivation property of $\cQ$ implies that 
$$
(\partial \hpt)(f \cdot \lambda) = (-1)^{|f|} d_A(f) \cdot \hpt(\lambda)+   f \cdot (\partial \hpt)(\lambda)
$$
for $f \in C^\infty(A[1])$, $\lambda \in \Gamma(\pi^! L)$. Due to this algebraic property of $\partial \hpt$, it suffices to show that $(\partial \hpt)\Big(\peta l\Big) \in F^2(\pi^! L)$ and $(\partial \hpt)(e_l) \in F^1(\pi^! L)$: 
\begin{align*}
(\partial \hpt)\Big(\peta l\Big) & = 0 \in F^2(\pi^! L), \\
(\partial \hpt)(e_l) & = \partial \Big(\peta l\Big)  =  \sum_{i,k =1}^r c_{il}^k \eta^i \peta k + e_l   -  e_l   \\
& =  \sum_{i,k =1}^r c_{il}^k \eta^i \peta k \in \Gamma(\pi^* A[1]) = F^1(\pi^! L).
\end{align*}
This completes the proof. 
\end{proof}

By Theorem~\ref{thm:HPLCptForm} and Corollary~\ref{cor:FormulaPertOp}, we have the following 

\begin{theorem}\label{prop:ContPullbackLieAbd}
The operator $\partial = \cQ -\incl$ is a small perturbation of the contraction $\big(\Gamma(\pi^! L), \incl; \hpt \big)$ over $\KK$. The perturbed contraction 
\begin{equation}\label{eq:piL_contraction}
\begin{tikzcd}
\big(\Gamma(\pi^* B),d^{\Bott}\big)
\arrow[r, " \tau ", shift left,hook] &  \big(\Gamma(\pi^! L), \cQ \big)
\arrow[l, " \pb", shift left,two heads] \arrow[loop, "\hpt",out=12,in= -12,looseness = 3]
\end{tikzcd},
\end{equation}
forms a contraction data over $(C^\infty(A[1]),d_A)$. Here, the coboundary operator $d^{\Bott}$ is the Bott differential, $\cQ =[s_{\ia},\argument]$, and 
\begin{equation}\label{eq:tauMain}
\tau   = \ib - \hpt \partial \, \ib = \ib - \hpt \cQ \, \ib: \Gamma(\pi^* B) \into \Gamma(\pi^! L).
\end{equation}
\end{theorem}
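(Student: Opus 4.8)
The plan is to deduce \eqref{eq:piL_contraction} from a single application of the compact homological perturbation lemma (Theorem~\ref{thm:HPLCptForm}) to the basic contraction $\big(\Gamma(\pi^!L),\incl;\hpt\big)$ of Proposition~\ref{prop:BasicContractionLiePair}, perturbed by $\partial=\cQ-\incl$, and then to read off the transferred data from Corollary~\ref{cor:FormulaPertOp}. First I would verify the two hypotheses. Smallness of $\partial$ follows from the preceding lemma: $(\partial\hpt)\big(F^q(\pi^!L)\big)\subset F^{q+1}(\pi^!L)$ together with $F^q(\pi^!L)=0$ for $q\geq2$ and the fact that $F$ is exhaustive and complete forces $(\partial\hpt)^2=0$, so $\partial$ is a small perturbation over $\KK$. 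For the ``relative'' part of the statement I would note that $\partial$ is a $d_A$-derivation of the $C^\infty(A[1])$-module $\Gamma(\pi^!L)$: since the anchor of $s_{\ia}$ is $d_A$, one has $\cQ(f\cdot\lambda)=d_A(f)\cdot\lambda+(-1)^{|f|}f\cdot\cQ(\lambda)$, and $\incl$ is $C^\infty(A[1])$-linear of degree $+1$; hence Theorem~\ref{thm:HPLCptForm} yields a contraction over the dg ring $(C^\infty(A[1]),d_A)$, not merely over $\KK$.

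Next I would record the handful of elementary identities that make the perturbation series of Corollary~\ref{cor:FormulaPertOp} truncate after the linear term. Using the trivialization $\Gamma(\pi^!L)\cong\Gamma(\pi^*A[1])\oplus\Gamma(\pi^*L)$ and the definitions of $\ia,\pa,\ib,\pb,\incl,\hpt$ one checks that $\incl\,\ib=0$, that $\hpt$, $\partial\hpt$ and the restriction of $\partial$ to $\Gamma(\pi^*A[1])$ are all valued in $\Gamma(\pi^*A[1])$, and that $\hpt$ and $\pb$ both vanish on $\Gamma(\pi^*A[1])$; the local formulas \eqref{eq:cQ_A[1]}--\eqref{eq:cQ_L} also give $\pb\,\cQ\,\hpt=0$ (check on the frame $\peta{l}$). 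Feeding these into Corollary~\ref{cor:FormulaPertOp} shows that the transferred projection is again $\pb$ (because $\pb\,\partial\hpt=0$), the transferred homotopy is again $\hpt$ (because $\hpt\,\partial\hpt=0$), and the transferred inclusion is $\tau=\ib-\hpt\,\partial\,\ib$, which equals $\ib-\hpt\,\cQ\,\ib$ since $\incl\,\ib=0$; this is \eqref{eq:tauMain}.

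It remains to identify the transferred differential $d'$ on $\Gamma(\pi^*B)$ with the Bott differential. Since $\pb\,\tau=\id$ and $\tau$ is a cochain map $(\Gamma(\pi^*B),d')\to(\Gamma(\pi^!L),\cQ)$, applying $\pb$ to $\cQ\,\tau=\tau\,d'$ gives $d'=\pb\,\cQ\,\tau=\pb\,\cQ\,\ib-\pb\,\cQ\,\hpt\,\cQ\,\ib=\pb\,\cQ\,\ib$, the last step using $\pb\,\cQ\,\hpt=0$. The operator $\pb\,\cQ\,\ib$ is itself a $d_A$-derivation of $\Gamma(\pi^*B)\cong\Gamma(\Lambda^\bullet A\dual\otimes B)$ (because $\cQ$ is and $\ib,\pb$ are $C^\infty(A[1])$-linear), so it is determined by its values on the module generators $\pb(e_l)$, $l=r+1,\dots,r+r'$; for these, \eqref{eq:cQ_L} gives $\pb\,\cQ(e_l)=\sum_{i=1}^r\eta^i\,\pb\big([e_i,e_l]\big)=\sum_{i=1}^r\eta^i\,\nabla^{\Bott}_{e_i}\big(\pb(e_l)\big)$, which is exactly $d^{\Bott}\big(\pb(e_l)\big)$ by the covariant-derivative formula. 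Hence $d'=d^{\Bott}$ and \eqref{eq:piL_contraction} is established. The one non-formal step is this last identification of the transferred differential; otherwise the difficulty is purely bookkeeping — matching the sign and ordering conventions of Corollary~\ref{cor:FormulaPertOp} to the truncated series, and being careful that the output of Theorem~\ref{thm:HPLCptForm} really is a contraction over $(C^\infty(A[1]),d_A)$ rather than merely over $\KK$.
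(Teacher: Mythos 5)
Your proposal is correct and follows essentially the same route as the paper: verify the vanishing identities $\hpt\partial\hpt=0$, $\pb\cQ\hpt=0$, $\partial\ib=\cQ\ib$ so that the perturbation series of Corollary~\ref{cor:FormulaPertOp} truncates, then identify $\pb\cQ\ib$ with $d^{\Bott}$ by checking on the frame $\pb(e_l)$ and invoking the common $d_A$-derivation property. The only cosmetic difference is that you obtain $\delta_\partial=\pb\cQ\tau$ from the cochain-map property rather than reading it directly off the transferred-differential formula, which amounts to the same computation.
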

\begin{proof}
Observe that 
$$
\hpt \partial\, \hpt =0, \quad \partial \ib = \cQ \ib, \quad \pb \partial = \pb \cQ, \quad \pb \cQ \hpt =0.
$$
Thus, by Corollary~\ref{cor:FormulaPertOp}, the perturbed operators are 
\begin{align*}
(\hpt)_\partial & = \sum_{k=0}^\infty \hpt(-\partial \hpt)^k = \hpt, \\
(\ib)_\partial & = \sum_{k=0}^\infty (- \hpt \partial)^k \ib   = \ib - \hpt  \cQ   \ib,  \\
(\pb)_\partial & =  \sum_{k=0}^\infty \pb (-\partial \hpt)^k = \pb - \pb \cQ  \hpt = \pb, \\
\delta_\partial  & =  \sum_{k=0}^\infty \pb  \partial(- \hpt \partial)^k \ib = \pb \cQ \ib - \pb \cQ \hpt \cQ   \ib= \pb \cQ \ib, 
\end{align*}
where $\delta_\partial:\Gamma(\pi^* B) \to \Gamma(\pi^* B)$ is the perturbed differential defined as in Definition~\ref{def:PertOp}. 
By $\hpt \ib =0$, it can be easily shown that the perturbed inclusion $\tau = (\ib)_\partial= \ib - \hpt  \cQ   \ib$ is $C^\infty(A[1])$-linear.

Let $\bar e_l = \pb(e_l)$, $l=r+1, \cdots, r+r'$. 
Since  
$$
\pair{e_i}{d^{\Bott}(\bar e_l)} = \nabla^{\Bott}_{e_i} \bar e_l = \sum_{k=r+1}^{r+r'} c_{il}^k \bar e_k, \qquad \forall \, i = 1, \cdots, r, \, \forall \, l = r+1, \cdots, r+r',
$$
we have 
$$
d^{\Bott}(\bar e_l) = \sum_{i=1}^r \sum_{k=r+1}^{r+r'} \eta^i c_{il}^k \bar e_k.
$$
Furthermore, 
\begin{align*}
\pb \cQ   \ib(\bar e_l) &= \pb \cQ  (e_l) \\
& = \pb \Big(\frac{1}{2} \sum_{s=1}^{n} \sum_{i,j,k=1}^r \rho_l(c_{ij}^k) \eta^i \eta^j \peta k + \sum_{k=1}^{r+r'} \sum_{i=1}^r \eta^i c_{il}^k e_k\Big) \\
& = \sum_{k=r+1}^{r+r'} \sum_{i=1}^r \eta^i c_{il}^k \bar e_k \\
& = d^{\Bott}(\bar e_l),
\end{align*}
for $l = r+1, \cdots, r+r'$. Since both the operators $\pb \cQ   \ib$ and $d^{\Bott}$ satisfy the equation
$$
D(f \cdot b) = d_A(f) \cdot b + (-1)^{|f|} f \cdot  D(b),
$$
for $f \in C^\infty(A[1])$, $b \in \Gamma(B)$, $D = \hpt \cQ   \ib$ or 
$d^{\Bott}$, we conclude that $\delta_\partial   =  d^{\Bott}$.
\end{proof}

The contraction \eqref{eq:piL_contraction} coincides with Sti\'{e}non--Vitagliano--Xu's contraction in \cite{2022arXiv221016769S}.

\section{Two Atiyah classes associated with a Lie pair}

Let $(L,A)$ be a Lie pair over a manifold $M$, and let $\nabla$ be an $L$-connection on $B=L/A$ extending the Bott connection. We denote by $\pairAtiyah\nabla \in \Gamma(A\dual \otimes B\dual \otimes \End B)$ the Atiyah cocycle of the Lie pair $(L,A)$ associated with the connection $\nabla$. Let $\cL$ be the dg Lie algebroid $\pi^!L \to A[1]$, as described in Proposition~\ref{prop:DGLieAbdFromLiePair}. The contraction \eqref{eq:piL_contraction} induces a contraction $\big( \Gamma( \cL\dual \otimes \End \cL), \cQ ; H^1_2 \big)$, with the projection: 
\begin{gather*}
 \Pi^1_2: \Gamma( \cL\dual \otimes \End \cL) \onto \Gamma\big(\pi^* (B\dual \otimes \End B)\big)\cong \Gamma\big(\Lambda^\bullet A\dual \otimes \Hom(B\otimes B, B)\big) , \\
 \Pi^1_2(\Theta) = \pb \circ \Theta \circ (\tau  \otimes \tau),
\end{gather*}
where $\tau= \ib - \hpt \cQ \ib: \Gamma(\pi^* B) \to \Gamma(\cL)$, and $\cL\dual \otimes \End \cL$ is identified with $\Hom(\cL\otimes \cL, \cL)$. See Proposition~\ref{prop:HomTensorCont}.

Our main theorem is the following

\begin{theorem}\label{thm:Main:TwoAtiyahAreSame}
There exists an $\cL$-connection $\nabla^\cL$ on $\cL$ with the property: 
$$
\Pi^1_2(\cLAtiyah) = \pairAtiyah\nabla,
$$ 
where $\cLAtiyah$ is the Atiyah cocycle of the dg Lie algebroid $\cL$ associated with $\nabla^\cL$. 
In particular, the isomorphism 
$$
(\Pi^1_2)_\ast: H^1\big(\Gamma( \cL\dual \otimes \End \cL), \cQ \big) \xto\cong H_{\CE}^1(A,B\dual \otimes \End B)
$$
induced by $\Pi^1_2$ sends the Atiyah class of the dg Lie algebroid $\cL = \pi^! L$ to the Atiyah class of the Lie pair $(L,A)$. 
\end{theorem}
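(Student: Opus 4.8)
The plan is to build the $\cL$-connection $\nabla^\cL$ on $\cL = \pi^!L$ out of the given $L$-connection $\nabla$ on $B$ together with the chosen splitting \eqref{eq:splitting} and the trivialization \eqref{eq:TrivializedPullbackAbd}, and then to compute the push-forward $\Pi^1_2(\cLAtiyah)$ directly in the local frame of Remark~\ref{rmk:LocFormula}. First I would describe the quasi-isomorphism $\tau = \ib - \hpt\cQ\ib$ explicitly on the basis $\bar e_l = \pb(e_l)$, $l = r+1,\dots,r+r'$: from the computation already carried out in the proof of Theorem~\ref{prop:ContPullbackLieAbd} one has $\hpt\cQ\ib(\bar e_l) = \sum_{i=1}^r \sum_{k=1}^{r} \eta^i c_{il}^k \peta{k}$, so $\tau(\bar e_l) = e_l - \sum_{i,k=1}^r \eta^i c_{il}^k \peta{k} \in \Gamma(\cL)$. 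The key point is that $\Pi^1_2(\Theta) = \pb\circ\Theta\circ(\tau\otimes\tau)$ only sees the components of $\Theta$ landing in $\pi^*B$ after evaluation on $\tau$-images; since $\tau(\bar e_l)$ has leading term $e_l$ plus a term of positive $\Lambda A\dual$-degree valued in $\pi^*A[1]$, and $\pb$ kills the $\pi^*A$-part, only a restricted piece of $\cLAtiyah$ contributes.

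Next I would construct $\nabla^\cL$. The natural candidate: choose any $\cL$-connection on $\cL$ which, with respect to the decomposition $\Gamma(\cL) \cong \Gamma(\pi^*A[1]) \oplus \Gamma(\pi^*L)$ and the further splitting $\pi^*L \cong \pi^*A \oplus \pi^*B$, restricts on the $\pi^*B$-summand to (the pullback of) the chosen $L$-connection $\nabla$ — via the anchor $\rho:\cL \onto T_{A[1]}$, an $L$-connection on $B$ together with a $T_M$-connection on $A$ and the horizontal lift $\psi$ of Remark~\ref{rmk:TwoExpressionpi!L} assemble into an $\cL$-connection on all of $\cL$. Then $\cLAtiyah = L_\cQ(\nabla^\cL)$, i.e.\ $\cLAtiyah(\lambda,\mu) = \cQ(\nabla^\cL_\lambda\mu) - \nabla^\cL_{\cQ\lambda}\mu - (-1)^{|\lambda|}\nabla^\cL_\lambda(\cQ\mu)$. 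I would evaluate $\Pi^1_2(\cLAtiyah)$ on $e_i \otimes \bar e_l$ (for $e_i$ a local section of $A$, here of $\Lambda A\dual$-degree one after $\tau$), using \eqref{eq:cQ_A[1]}, \eqref{eq:cQ_L} for the action of $\cQ$ and the fact that $\pb$ annihilates $\pi^*A$. The goal is to show the surviving terms reproduce exactly $\pairAtiyah\nabla(e_i,\bar e_l) = \nabla_{e_i}\nabla_{e_l} - \nabla_{e_l}\nabla_{e_i} - \nabla_{[e_i,e_l]}$ acting on $B$, i.e.\ the curvature $R^\nabla(e_i, e_l)$ restricted appropriately; the structure constants $c_{il}^k$ appearing in $\cQ$ are precisely those that encode the bracket $[e_i,e_l]$ and the Bott action, so the match should be forced.

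The main obstacle I anticipate is bookkeeping of the correction term $-\hpt\cQ\ib$ in $\tau$: because $\tau(\bar e_l)$ is not simply $e_l$, the composition $\pb\circ\cLAtiyah\circ(\tau\otimes\tau)$ produces cross-terms involving $\cLAtiyah$ evaluated on $\peta{k}$-type arguments, and one must check these either vanish or combine into the curvature. A clean way to handle this is to exploit that $\tau$ is, by Theorem~\ref{prop:ContPullbackLieAbd}, a chain map from $(\Gamma(\pi^*B), d^{\Bott})$ to $(\Gamma(\cL),\cQ)$, hence $\cQ\tau = \tau d^{\Bott}$; differentiating the connection identity and using $\cQ\,\cLAtiyah = 0$ lets one replace the troublesome terms by coboundary terms that $\Pi^1_2$ (a chain map by Proposition~\ref{prop:HomTensorCont}) sends to the $d^{\Bott}$-image of something, which does not affect the cohomology class — and for the cocycle-level statement one either absorbs it by modifying $\nabla^\cL$ by an $\cL$-connection correction (permissible since the Atiyah class is connection-independent) or chooses $\nabla^\cL$ so that $\nabla^\cL_{\peta{k}}$ vanishes on the relevant sections, which is possible because the $\peta{k}$ are vertical. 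Once $\Pi^1_2(\cLAtiyah) = \pairAtiyah\nabla$ is established at the cocycle level, the statement about $(\Pi^1_2)_\ast$ on cohomology is immediate since $\Pi^1_2$ is a quasi-isomorphism.
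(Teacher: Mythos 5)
Your proposal follows essentially the same route as the paper: $\nabla^\cL$ is assembled from $\nabla$, the splitting and the horizontal lift exactly as you describe (Lemma~\ref{lem:pi^!Lcntn}, with $\nabla^\cL$ vanishing on pairs of vertical arguments), and the identity $\Pi^1_2(\cLAtiyah)=\pairAtiyah\nabla$ is then verified at the cocycle level by the same explicit computation in the local frame of Remark~\ref{rmk:LocFormula}, the cross-terms produced by the correction $-\hpt\cQ\,\ib$ in $\tau$ either vanishing for degree reasons or combining into the curvature, just as you anticipate. The only slip is a sign: $\hpt\cQ\,\ib(\bar e_l)=-\sum_{i,k=1}^{r}\eta^i c_{il}^k\peta{k}$, hence $\tau(\bar e_l)=e_l+\sum_{i,k=1}^{r}\eta^i c_{il}^k\peta{k}$; this does not affect the strategy.
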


We will prove Theorem~\ref{thm:Main:TwoAtiyahAreSame} in Section~\ref{sec:AtiyahPullBackLieAbd}.

\subsection{A $(\pi^!L)$-connection on $\pi^!L$}

Let $\nabla: \Gamma(L) \times \Gamma(B) \to \Gamma(B)$ be an $L$-connection on $B$ extending the Bott connection. 
By choosing a splitting \eqref{eq_intro:splitting}, we further extend $\nabla$ to an $L$-connection $\tilde\nabla$ on $A[1]\oplus L$. 
The connection $\tilde \nabla: \Gamma(L) \times \Gamma(A[1] \oplus L) \to \Gamma(A[1] \oplus L)$ chosen in this way has the property 
$$
\tilde\nabla_{l} \big(\ib(b)\big) = \ib(\nabla_l b)   , \qquad \forall \, l \in \Gamma(L), b \in \Gamma(B).
$$

By choosing a connection, we have an isomorphism $\Gamma(\pi^!L) \cong \Gamma(\pi^* A[1]) \oplus \Gamma(\pi^* L)$. See \eqref{eq:TrivializedPullbackAbd}. We identify the graded vector bundle $\pi^*A[1]$ with the vertical tangent bundle $T_{A[1]}^{\ver}$ which is a graded Lie subalgebroid of $T_{A[1]}$.  
Let $\nabla^{A[1]}$ be a  $T_{A[1]}^{\ver}$-connection  on $\pi^! L$, and let 
$$
\nabla^\cL: \Gamma(\pi^! L) \times \Gamma(\pi^! L) \to \Gamma(\pi^! L)
$$
be the map 
$$
\nabla^\cL_{(x,v)} \lambda = \nabla^{A[1]}_x \lambda + \nabla^L_v \lambda, 
$$
for $(x,v) \in \Gamma(\pi^*A[1])\oplus \Gamma(\pi^*L) \cong \Gamma(\pi^!L)$, $\lambda \in \Gamma(\pi^!L)$, where 
$$
\nabla^L: \Gamma(\pi^* L) \times \Gamma\big(\pi^* (A[1]\oplus  L)\big) \to \Gamma\big(\pi^* (A[1]\oplus  L)\big),
$$
$$
 \nabla^L_{f \otimes l} \big(g \otimes (a,l')\big) = \big(\psi\big(f\otimes \anchorL(l)\big)(g) \big) \otimes (a,l') + (f g) \otimes \big(\tilde\nabla_l (a,l')\big),
$$
for $f,g \in C^\infty(A[1])$, $l,l' \in \Gamma(L)$ and $a \in \Gamma(A)$. In the definition of $\nabla^L$, we use the isomorphism $\psi$ between $\Gamma(\pi^* T_M)$ and the space of horizontal vector fields on $A[1]$ described in Remark~\ref{rmk:TwoExpressionpi!L}, and use the identification $\Gamma\big(\pi^* (A[1]\oplus  L)\big) \cong C^\infty(A[1]) \otimes_{C^\infty(M)} \Gamma(A[1]\oplus  L)$. Also note that $\nabla^L$ is well-defined because 
$$
\nabla^L_{f \otimes l} \big((bg) \otimes (a,l')\big) =  \big(\anchorL(l) b \big) \cdot fg \otimes (a,l') + b \cdot \nabla^L_{f \otimes l} \big(g \otimes (a,l')\big) = \nabla^L_{f \otimes l} \big(g \otimes (ba,bl')\big) 
$$
for any $b \in C^\infty(M)$.

\begin{lemma}\label{lem:pi^!Lcntn}
The bilinear map $\nabla^\cL$ is an $\cL$-connection on $\cL$ with the following properties: 
\begin{gather*}
\nabla^\cL_{(0,l\circ \pi)}(a\circ \pi,l'\circ \pi) = \big(\tilde\nabla_l (a,l')\big) \circ \pi, \\
\nabla^\cL_{(a\circ \pi, 0)} (a'\circ \pi, 0) =0,
\end{gather*}
for $a,a' \in \Gamma(A[1])$ and $l,l' \in \Gamma(L)$, where the pairs are elements of $\Gamma(\pi^*A[1])\oplus \Gamma(\pi^*L) \cong \Gamma(\pi^!L)$.  
In particular, 
$$
\nabla^\cL_{(0,l\circ \pi)}(0,\ib(b)\circ \pi) = \ib(\nabla_l b),
$$
for $b \in \Gamma(B)$. 
\end{lemma}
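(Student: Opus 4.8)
The plan is to verify directly the two defining properties of an $\cL$-connection---$C^\infty(A[1])$-linearity in the first argument, and the graded Leibniz rule in the second---together with degree-preservation, and then to read off the two displayed identities by unwinding the formulas defining $\nabla^{A[1]}$ and $\nabla^L$. The observation that organizes the computation is that, under the trivialization \eqref{eq:TrivializedPullbackAbd} and the description in Remark~\ref{rmk:TwoExpressionpi!L}, the anchor in $\XX(A[1])$ of a section $(x,v)$ of $\pi^!L$ equals $x+\psi\big((\id\otimes\anchorL)(v)\big)$, the sum of a vertical and a horizontal vector field; correspondingly, in $\nabla^\cL_{(x,v)}=\nabla^{A[1]}_x+\nabla^L_v$ the summand $\nabla^{A[1]}_x$ accounts for the vertical part of the anchor and $\nabla^L_v$ for the horizontal part.

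First I would record that $\nabla^\cL$ is well defined: the direct sum decomposition \eqref{eq:TrivializedPullbackAbd} makes the expression $(x,v)$ for a section unambiguous, and $\nabla^L$ is well defined (as checked in the text; the analogous check for varying $f\otimes l$ uses the $C^\infty(M)$-linearity of $\anchorL$ and of $\psi$ and the $C^\infty(M)$-linearity of $\tilde\nabla$ in its first argument). For the first axiom, $\nabla^{A[1]}$ is $C^\infty(A[1])$-linear in its first argument because it is a connection, and $\nabla^L_v\lambda$ is $C^\infty(A[1])$-linear in $v$ because $\psi$ is $C^\infty(A[1])$-linear while the term $(fg)\otimes\big(\tilde\nabla_l(a,l')\big)$ of the defining formula is manifestly linear in $f$. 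For the second axiom I would first check, directly from the defining formula of $\nabla^L$ and the graded Leibniz rule for the vector field $\psi\big((\id\otimes\anchorL)(v)\big)$ acting on a product of functions on $A[1]$, that $\nabla^L_v$ satisfies the Leibniz rule in its second argument with ``anchor'' $v\mapsto\psi\big((\id\otimes\anchorL)(v)\big)$; adding this to the Leibniz rule for $\nabla^{A[1]}$, whose anchor is the inclusion $\Gamma(T_{A[1]}^{\ver})\into\XX(A[1])$, yields the Leibniz rule for $\nabla^\cL$ with the anchor $x+\psi\big((\id\otimes\anchorL)(v)\big)$ predicted above. Degree-preservation of both summands is immediate from the construction, using that $\tilde\nabla$ and the degree-shift maps preserve degrees.

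It remains to unwind the two identities. For the first, write $l\circ\pi=1\otimes l$ and $(a\circ\pi,l'\circ\pi)=1\otimes(a,l')$ under $\Gamma\big(\pi^*(A[1]\oplus L)\big)\cong C^\infty(A[1])\otimes_{C^\infty(M)}\Gamma(A[1]\oplus L)$; since $\nabla^{A[1]}_0=0$, one gets $\nabla^\cL_{(0,l\circ\pi)}(a\circ\pi,l'\circ\pi)=\nabla^L_{1\otimes l}\big(1\otimes(a,l')\big)=\psi\big(1\otimes\anchorL(l)\big)(1)\otimes(a,l')+1\otimes\big(\tilde\nabla_l(a,l')\big)$, and the first summand vanishes because any vector field kills the constant function $1$; hence the value is $\big(\tilde\nabla_l(a,l')\big)\circ\pi$. (This holds for every $T_{A[1]}^{\ver}$-connection $\nabla^{A[1]}$.) For the second identity we take $\nabla^{A[1]}$ to be the $T_{A[1]}^{\ver}$-connection determined by the trivialization \eqref{eq:TrivializedPullbackAbd}, i.e.\ the one characterized by $\nabla^{A[1]}_x(\xi\circ\pi)=0$ for all $\xi\in\Gamma(A[1]\oplus L)$; then $\nabla^\cL_{(a\circ\pi,0)}(a'\circ\pi,0)=\nabla^{A[1]}_{a\circ\pi}(a'\circ\pi,0)=0$. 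Finally, the displayed special case is the first identity with $a=0$ and $l'=\ib(b)$, combined with the relation $\tilde\nabla_l\big(\ib(b)\big)=\ib(\nabla_l b)$ built into the choice of $\tilde\nabla$.

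The one place where genuine care is needed is the Leibniz rule for $\nabla^L$ in its second argument: one must keep track of the Koszul signs produced when the degree-$|v|$ horizontal vector field $\psi\big((\id\otimes\anchorL)(v)\big)$ is distributed over a product of functions on $A[1]$, and confirm that the well-definedness computation already carried out in the text (for functions pulled back from $M$) upgrades to the full Leibniz identity for arbitrary functions on $A[1]$. Everything else is a direct unwinding of the definitions.
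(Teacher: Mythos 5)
Your verification of the connection axioms and of the first identity follows the same route as the paper: the key point in both is that the horizontal lift $\psi$ is $C^\infty(A[1])$-linear, so that $\nabla^L$ satisfies the Leibniz rule with ``anchor'' $v\mapsto\psi\big((\id\otimes\anchorL)(v)\big)=\anchor(0,v)$, and the first identity is then a direct unwinding of the definition (your remark that the term $\psi\big(1\otimes\anchorL(l)\big)(1)\otimes(a,l')$ dies because a derivation kills the constant function is exactly the content of ``follows directly from the definition'' in the paper). The special case $\tilde\nabla_l\big(\ib(b)\big)=\ib(\nabla_l b)$ is likewise handled identically.

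The one place you diverge is the second identity $\nabla^\cL_{(a\circ\pi,0)}(a'\circ\pi,0)=0$, and here your argument proves less than the lemma asserts. The lemma is stated for the $\nabla^\cL$ built from an \emph{arbitrary} $T_{A[1]}^{\ver}$-connection $\nabla^{A[1]}$ on $\pi^!L$, whereas you secure the vanishing only by replacing $\nabla^{A[1]}$ with the particular flat connection determined by the trivialization (the one with $\nabla^{A[1]}_x(\xi\circ\pi)=0$). The paper instead uses a degree count that works for every choice: both $(a\circ\pi,0)$ and $(a'\circ\pi,0)$ have degree $-1$, so $\nabla^\cL_{(a\circ\pi,0)}(a'\circ\pi,0)$ would have degree $-2$; but $\Gamma(\pi^!L)\cong\Gamma(\pi^*A[1])\oplus\Gamma(\pi^*L)$ is concentrated in degrees $\geq -1$, since $C^\infty(A[1])=\Gamma(\Lambda^\bullet A\dual)$ is non-negatively graded and the fibers sit in degrees $-1$ and $0$. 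Hence the element is forced to vanish with no hypothesis on $\nabla^{A[1]}$. Your restricted version would still suffice for the application in Theorem~\ref{thm:Main:TwoAtiyahAreSame} (only the existence of some $\cL$-connection with these properties is used there), but as a proof of the lemma as stated you should replace the specialization by the degree argument.
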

\begin{proof}
Since $\psi:\Gamma(\pi^* T_M) \to  \Gamma(T^{\hor}_{A[1]})$ is $C^\infty(A[1])$-linear, we have 
\begin{gather*}
\nabla^L_{f v} \lambda = f \nabla^L_{v} \lambda, \\
\nabla^L_{v} (f \lambda) = \big(\psi(\id\otimes \anchorL)(v) \big) (f) \cdot \lambda + f \nabla^L_{v} \lambda,
\end{gather*}
for $f \in C^\infty(A[1])$, $v \in \Gamma(\pi^*L)\cong C^\infty(A[1])\otimes \Gamma(L)$ and $\lambda \in\Gamma(\pi^!L)$. 
Note that, by Remark~\ref{rmk:TwoExpressionpi!L}, we have 
$$
\anchor(0,v) = \psi(\id\otimes \anchorL)(v),
$$
where $\anchor: \Gamma(\pi^!L) \cong \Gamma(\pi^*(A[1])\oplus \pi^* L) \to \XX(A[1])$ is the anchor map. 
Thus, for $f \in C^\infty(A[1])$, $v \in \Gamma(\pi^*L)$, $\lambda \in\Gamma(\pi^!L)$ and $x \in \Gamma(\pi^* A[1]) \cong \Gamma(T^{\ver}_{A[1]})$, 
\begin{gather*}
\nabla^\cL_{f (x,v)} \lambda = f \nabla^\cL_{(x,v)} \lambda, \\
\nabla^\cL_{(0,v)} (f \lambda) = \big(\anchor(0,v) \big) (f) \cdot \lambda + f \nabla^\cL_{(0,v)} \lambda, \\
\nabla^\cL_{(x,0)} (f \lambda) = \big(\anchor(x,0) \big) (f) \cdot \lambda + (-1)^{|f|} f \nabla^\cL_{(x,0)} \lambda,
\end{gather*}
where the last equation is from the fact $\nabla^{A[1]}$ is a $T_{A[1]}^{\ver}$-connection on $\pi^! L$. Therefore, $\nabla^\cL$ is an $\cL$-connection on $\cL$. 

The first property of $\nabla^\cL$ follows directly from the definition. For the second property, note that both $(a\circ \pi, 0)$ and $(a'\circ \pi, 0)$ are of degree $-1$, and thus $|\nabla^\cL_{(a\circ \pi, 0)} (a'\circ \pi, 0)|=-2$. Nevertheless, the degree of each homogeneous element in $\Gamma(\pi^!L)$ is at least $-1$. Thus, $\nabla^\cL_{(a\circ \pi, 0)} (a'\circ \pi, 0) =0$. 
\end{proof}

In \cite[Section~3.6]{2022arXiv221016769S}, Sti\'{e}non, Vitagliano and Xu independently constructed an $\cL$-connection on $\cL$ for a different purpose. One also can use their connection for Theorem~\ref{thm:Main:TwoAtiyahAreSame}.

\subsection{The two Atiyah classes}\label{sec:AtiyahPullBackLieAbd}

By Theorem~\ref{prop:ContPullbackLieAbd} and Proposition~\ref{prop:HomTensorCont}, we have the contraction 
\begin{equation}\label{eq:AtiyahContraction}
\begin{tikzcd}
\Big(\Gamma\big(\pi^*(B\dual \otimes \End B)\big),d^{\Bott}\Big)
\arrow[r, shift left,hook,"\cT^1_2"] &  \Big(\Gamma( \cL\dual \otimes \End \cL), \cQ \Big)
\arrow[l, " \Pi^1_2", shift left,two heads] \arrow[loop, "H^1_2",out=7,in= -7,looseness = 3]
\end{tikzcd},
\end{equation}
where 
\begin{equation}
\begin{split}
H^1_2& :\Gamma( \cL\dual \otimes \End \cL) \to \Gamma( \cL\dual \otimes \End \cL), \\
H^1_2(\Theta)  & = \hpt \circ \Theta + (-1)^{|\Theta|} \sigma \circ \Theta \circ \big(\hpt \otimes \id + \sigma \otimes \hpt \big),
\end{split}
\end{equation}
and
$$
\sigma =   (\ib - \hpt \cQ \ib)\pb = \id- [\cQ, \hpt] :\Gamma( \cL) \to \Gamma( \cL).
$$
The small space $\Gamma\big(\pi^*(B\dual \otimes \End B)\big)$ of the contraction \eqref{eq:AtiyahContraction} is identified with $\Gamma(\Lambda^\bullet A\dual \otimes B\dual \otimes \End B)$, equipped with the Bott differential $d^{\Bott}$. The projection map is defined as 
$$
\Pi^1_2: \Gamma( \cL\dual \otimes \End \cL) \onto \Gamma\big(\pi^*(B\dual \otimes \End B)\big), \; \Theta \mapsto \pb \circ \Theta \circ (\tau  \otimes \tau).
$$
Here, $\tau = \ib - \hpt \cQ \ib: \Gamma(\pi^*B) \to \Gamma( \cL)$, as defined in Theorem~\ref{prop:ContPullbackLieAbd}. 

Let $\pairAtiyah\nabla  \in \Gamma(\Lambda^1 A\dual \otimes B\dual \otimes \End B) \subset \Gamma\big(\pi^*(B\dual \otimes \End B)\big)$ be the Atiyah cocycle of the Lie pair $(L,A)$ associated with an $L$-connection $\nabla$ extending the Bott connection, and let $ \cLAtiyah \in \Gamma(\cL\dual \otimes \End \cL)$ be the Atiyah cocycle associated with the $\cL$-connection $\nabla^\cL$ constructed in Lemma~\ref{lem:pi^!Lcntn}. 
Let $\cLAtiyahProj \in \Gamma\big(\pi^*(B\dual \otimes \End B)\big)$ be the image  
$$
\cLAtiyahProj = \Pi^1_2(\cLAtiyah) = \pb \circ \cLAtiyah \circ (\tau \otimes \tau)
$$
of $\cLAtiyah$ under the projection $\Pi^1_2$. We will prove that $\cLAtiyahProj =\pairAtiyah\nabla$.

\begin{proof}[Proof of Theorem~\ref{thm:Main:TwoAtiyahAreSame}]
Following the notations in Remark~\ref{rmk:LocFormula}, we have 
\begin{align*}
\hpt \cQ   \ib(\bar e_l) 
& = \hpt\Big( \frac{1}{2}  \sum_{i,j,k=1}^r \rho_l(c_{ij}^k) \eta^i \eta^j \peta k   + \sum_{k=1}^{r+r'} \sum_{i=1}^r \eta^i c_{il}^k e_k \Big) \\
& = -\sum_{k=1}^{r} \sum_{i=1}^r \eta^i c_{il}^k \peta k,
\end{align*}
where $\bar e_l = \pb(e_l)$, $l = r+1, \cdots, r+r'$. Thus,
$$
\tau(\bar e_l) = \sigma(e_l) = 
e_l + \sum_{k=1}^{r } \sum_{i=1}^r \eta^i c_{il}^k \peta k,   \qquad \forall\, l= r+1, \cdots, r+r'.
$$

Let $\christoffelLL_{ij}^k$, $\christoffelLA_{ij}^k$, $\christoffelAA_{ij}^k$, $\christoffelAL_{ij}^k$ be the Christoffel symbols:  
\begin{gather*}
\nabla^\cL_{e_i} e_j =  \sum_{k=1}^{r+r'} \christoffelLL_{ij}^k e_k, \qquad \nabla^\cL_{e_i} \peta j = \sum_{k=1}^{r } \christoffelLA_{ij}^k \peta k,  \\
\nabla^{\cL}_{\peta i} \peta j = \sum_{k=1}^{r } \christoffelAA_{ij}^k \peta k =0, \qquad \nabla^{\cL}_{\peta i} e_j = \sum_{k=1}^{r } \christoffelAL_{ij}^k \peta k.
\end{gather*}
Note that since $\nabla^\cL_{e_i} e_j = (\tilde\nabla_{e_i} e_j)\circ \pi \in \Gamma(\pi^*L)$, we do not have $\peta k$-terms in $\nabla^\cL_{e_i} e_j$. 
Furthermore, it follows from Lemma~\ref{lem:pi^!Lcntn} that 
\begin{align}
\christoffelAA_{ij}^k &= 0,   \qquad \forall \, i,j,k, \\
\christoffelLL_{ij}^k  & = 0,   \qquad \forall \, j \geq r+1,\, k \leq r, \label{eq:LcntnLem} \\
\christoffelLL_{ij}^k   &=c_{ij}^k,   \qquad \forall \, i \leq r, \, j,k \geq r+1.\label{eq:LcntnBott}
\end{align}
For $i,j = r+1, \cdots, r+r'$, we have   
\begin{align*}
\cLAtiyahProj(\bar e_i,\bar e_j) & = \pb \cLAtiyah(\sigma e_i, \sigma e_j) \\
& =\pb \cLAtiyah(e_i +  \sum_{s,t=1}^r \eta^s c_{si}^t \peta t, e_j +  \sum_{u,v=1}^r \eta^u c_{uj}^v \peta v)  \\
& = \sA + \sB +\sC + \sD, 
\end{align*}
where
\begin{align*}
\sA &= \pb \cLAtiyah(e_i,e_j), \\
\sB & = \pb \cLAtiyah(e_i,   \sum_{u,v=1}^r \eta^u c_{uj}^v \peta v), \\
\sC & = \pb \cLAtiyah(\sum_{s,t=1}^r \eta^s c_{si}^t \peta t, e_j), \\
\sD & = \pb \cLAtiyah(\sum_{s,t=1}^r \eta^s c_{si}^t \peta t, \sum_{u,v=1}^r \eta^u c_{uj}^v \peta v).
\end{align*}

Using \eqref{eq:cQ_L}, \eqref{eq:LcntnLem} and \eqref{eq:LcntnBott}, one can show that 
\begin{align*}
\sA & = \sum_{p=1}^r \sum_{k=r+1}^{r+r'}   \eta^p \rho_p (\christoffelLL_{ij}^k) \bar e_k +  \sum_{p=1}^r \sum_{q,k =r+1}^{r+r'}  \eta^p \christoffelLL_{ij}^q  c_{pq}^k \bar e_k     - \sum_{p,q =1}^{r }\sum_{ k =r+1}^{r+r'}  \eta^p c_{pi}^q  {c}_{qj}^k \bar e_k \\
& \qquad  - \sum_{p=1}^r \sum_{q,k =r+1}^{r+r'}\eta^p c_{pi}^q  \christoffelLL_{qj}^k \bar e_k  - \sum_{p=1}^r\sum_{k=r+1}^{r+r'}    \eta^p \rho_i(c_{pj}^k) \bar e_k - \sum_{p=1}^r \sum_{q=1}^{r+r'}  \sum_{ k =r+1}^{r+r'} \eta^p  c_{pj}^q  \christoffelLL_{iq}^k \bar e_k.
\end{align*}
By \eqref{eq:cQ_A[1]}, \eqref{eq:cQ_L} and \eqref{eq:LcntnBott}, it is straightforward to show that 
$$
\pb \cLAtiyah(e_i,\peta v)  =  -  \sum_{k=r+1}^{r+r'} \christoffelLL_{iv}^k \bar e_k , \qquad \text{and} \qquad 
\pb \cLAtiyah(\peta t,e_j)  =  0.
$$
Thus, 
\begin{align*}
\sB & =  \sum_{p,q=1}^r \sum_{k=r+1}^{r+r'}  \eta^p c_{pj}^q      \christoffelLL_{iq}^k \bar e_k, \\
\sC &= 0. 
\end{align*}
Since the degree of $\cLAtiyah(\peta t,\peta v )$ is  $-1$, we have 
$$
\cLAtiyah(\peta t,\peta v ) \in \Gamma(\pi^*A[1]) \subset \ker(\pb),
$$
and thus 
$$
\sD = 0.
$$

Therefore, 
\begin{align*}
\cLAtiyahProj(\bar e_i,\bar e_j) & =   \sA + \sB + \sC + \sD \\
&= \sum_{p=1}^r \sum_{k=r+1}^{r+r'}   \eta^p \rho_p (\christoffelLL_{ij}^k) \bar e_k +  \sum_{p=1}^r \sum_{q,k =r+1}^{r+r'}  \eta^p \christoffelLL_{ij}^q  c_{pq}^k \bar e_k     - \sum_{p,q =1}^{r }\sum_{ k =r+1}^{r+r'}  \eta^p c_{pi}^q  {c}_{qj}^k \bar e_k \\
& \qquad  - \sum_{p=1}^r \sum_{q,k =r+1}^{r+r'}\eta^p c_{pi}^q  \christoffelLL_{qj}^k \bar e_k  - \sum_{p=1}^r\sum_{k=r+1}^{r+r'}    \eta^p \rho_i(c_{pj}^k) \bar e_k - \sum_{p=1}^r   \sum_{q, k =r+1}^{r+r'} \eta^p  c_{pj}^q  \christoffelLL_{iq}^k \bar e_k.
\end{align*}

For $p=1, \cdots, r$ and $i,j = r+1, \cdots, r+r'$, we have
\begin{align*}
\pairAtiyah\nabla(e_p,\bar e_i) \bar e_j & = \nabla_{e_p} \nabla_{e_i} \bar e_j - \nabla_{e_i} \nabla_{e_p} \bar e_j - \nabla_{[e_p,e_i]} \bar e_j \\
& = \sum_{k=r+1}^{r+r'} \nabla_{e_p}  \big(\christoffelLL_{ij}^k \bar e_k \big) - \sum_{k=r+1}^{r+r'}  \nabla_{e_i} \big( c_{pj}^k \bar e_k \big)- \sum_{l= 1}^{r+r'}  c_{pi}^l \nabla_{e_l} \bar e_j \\
& = \sum_{k=r+1}^{r+r'} \Big(  \rho_p \big(\christoffelLL_{ij}^k \big) \bar e_k +\sum_{l=r+1}^{r+r'} \christoffelLL_{ij}^k c_{pk}^l \bar e_l   \Big) - \sum_{k=r+1}^{r+r'} \Big( \rho_i(c_{pj}^k) \bar e_k+  \sum_{l=r+1}^{r+r'}  c_{pj}^k \christoffelLL_{ik}^l \bar e_l  \Big)\\
& \qquad - \sum_{k=r+1}^{r+r'} \sum_{l= 1}^{r }  c_{pi}^l c_{lj}^k \bar e_k - \sum_{k=r+1}^{r+r'} \sum_{l= r+1}^{r+r'}  c_{pi}^l \christoffelLL_{lj}^k \bar e_k .
\end{align*}
Equivalently,  
\begin{align*}
\pairAtiyah\nabla(\bar e_i, \bar e_j) & = \sum_{p=1}^r \sum_{k=r+1}^{r+r'} \eta^p \rho_p \big(\christoffelLL_{ij}^k \big) \bar e_k +\sum_{p=1}^r   \sum_{q,k=r+1}^{r+r'} \eta^p \christoffelLL_{ij}^q  c_{pq}^k \bar e_k     - \sum_{p=1}^r \sum_{k=r+1}^{r+r'}   \eta^p \rho_i(c_{pj}^k) \bar e_k  \\ 
& \qquad -\sum_{p=1}^r  \sum_{q,k=r+1}^{r+r'} \eta^p c_{pj}^q \christoffelLL_{iq}^k \bar e_k    - \sum_{p,q= 1}^{r } \sum_{k=r+1}^{r+r'}  \eta^p  c_{pi}^q c_{qj}^k \bar e_k - \sum_{p=1}^{r  } \sum_{q,k= r+1}^{r+r'} \eta^p c_{pi}^q \christoffelLL_{qj}^k \bar e_k . 
\end{align*}
By comparing the formulas of $\cLAtiyahProj(\bar e_i,\bar e_j)$ and $\pairAtiyah\nabla(\bar e_i, \bar e_j)$, we conclude that 
$$
\Pi^1_2(\cLAtiyah) = \cLAtiyahProj = \pairAtiyah\nabla.
$$ 
The proof of Theorem~\ref{thm:Main:TwoAtiyahAreSame} is complete. 
\end{proof}

\subsection{The two Todd classes}

By Corollary~\ref{cor:ToddContraction}, the contraction \eqref{eq:piL_contraction} generates the contraction data
$$
\begin{tikzcd}
\big(\Gamma\big(\pi^* (\Lambda B\dual \otimes \End B)\big),d^{\Bott} \big)
\arrow[r, " \widehat{\cT} ", shift left,hook] &  \big(\Gamma(\Lambda \cL\dual \otimes \End \cL), \cQ \big)
\arrow[l, " \widehat{\Pi}", shift left,two heads] \arrow[loop, "\widehat{H}",out=8,in= -8,looseness = 3]
\end{tikzcd}
$$
whose inclusion map $\widehat{\cT}$ is an algebra morphism.  
Furthermore, we also have the contraction data  
\begin{equation}\label{eq:WedgeContraction}
\begin{tikzcd}
\big(\Gamma\big(\pi^* (\Lambda B\dual)\big),d^{\Bott} \big)
\arrow[r, " \cT_\Lambda", shift left,hook] &  \big(\Gamma(\Lambda \cL\dual ), \cQ \big)
\arrow[l, " \Pi_\Lambda", shift left,two heads] \arrow[loop, "H_\Lambda",out=8,in= -8,looseness = 3]
\end{tikzcd}
\end{equation}
by Lemma~\ref{lem:HomContraction} and Proposition~\ref{prop:ExteriorAlgContraction}. 

\begin{lemma}\label{lem:Trace}
The inclusion maps $\widehat\cT$ and $\cT_\Lambda$ commute with the (super)traces: 
$$
\begin{tikzcd}
\Gamma\big(\pi^* (\Lambda B\dual \otimes \End B)\big) \ar[r,"\widehat\cT"]  \ar[d,"\tr"'] & \Gamma(\Lambda\cL\dual \otimes \End \cL) \ar[d,"\str"] \\
\Gamma\big(\pi^* (\Lambda B\dual)\big) \ar[r,"\cT_\Lambda"] &  \Gamma(\Lambda\cL\dual) 
\end{tikzcd}
$$ 
\end{lemma}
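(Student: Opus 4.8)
The plan is to reduce the statement to a pointwise (in fact purely linear-algebraic) check, using the explicit descriptions of the maps $\widehat\cT$ and $\cT_\Lambda$ together with the fact that both are built by the homological-tensor constructions of Section~\ref{sec:HomTensorConstruction} applied to the single contraction $\tau$. First I would recall that $\widehat\cT$ is the inclusion of the contraction on $\Gamma(\Lambda\cL\dual\otimes\End\cL)$ obtained from $\tau$ by first forming $\End$ and then the exterior algebra, so that on a decomposable element $\omega\otimes\Phi$ with $\omega\in\Gamma(\pi^*\Lambda B\dual)$ and $\Phi\in\Gamma(\pi^*\End B)$ one has $\widehat\cT(\omega\otimes\Phi)=\cT_\Lambda(\omega)\otimes(\tau\circ\Phi\circ\pb)$, where $\tau\circ\Phi\circ\pb$ is the ``extension by zero'' of $\Phi$ along the splitting $\pb,\ib$; similarly, on $\pi^*B$ the retraction $\sigma=\tau\pb$ satisfies $\sigma^2=\sigma$ and $\pb\,\tau=\id$, so $\tau\pb$ is an idempotent of $\Gamma(\cL)$ whose image is exactly the ``$B$-direction'' and whose kernel contains the fiberwise part $\Gamma(\pi^*A[1])$.

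The key algebraic step is then the following observation about the supertrace. If $V=V_0\oplus V_1$ is a graded vector space with an idempotent $P:V\to V$ whose image is a purely even subspace $V'$, and $\Phi:V'\to V'$ is any endomorphism, then $\str_V(P\Phi P)=\tr_{V'}(\Phi)$, because the contribution of the complement $\ker P$ vanishes. In our situation $V=\cL_{\xi}$ (the fiber of $\cL$ at a point $\xi\in A[1]$), $P=\sigma_\xi$, $V'\cong B_{\pi(\xi)}$, and $\tau\circ\Phi\circ\pb$ restricted to $V'$ is exactly $\Phi$; hence
\[
\str\bigl(\widehat\cT(\omega\otimes\Phi)\bigr)
=\cT_\Lambda(\omega)\cdot\str\bigl(\sigma\circ(\tau\Phi\pb)\circ\sigma\bigr)
=\cT_\Lambda(\omega)\cdot\tr(\Phi)
=\cT_\Lambda\bigl(\tr(\omega\otimes\Phi)\bigr),
\]
which is exactly commutativity of the square on decomposable elements. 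Since both $\str\circ\widehat\cT$ and $\cT_\Lambda\circ\tr$ are $\Gamma(\Lambda^\bullet A\dual)$-linear (indeed $C^\infty(A[1])$-linear) and the decomposable elements $\omega\otimes\Phi$ generate $\Gamma(\pi^*(\Lambda B\dual\otimes\End B))$ as such a module, the identity extends to the whole space.

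The only point requiring genuine care — and the step I expect to be the main obstacle — is making the first identity, $\widehat\cT(\omega\otimes\Phi)=\cT_\Lambda(\omega)\otimes(\tau\Phi\pb)$, completely precise: one must unwind the homological-tensor machinery of Section~\ref{sec:HomTensorConstruction} to confirm that the $\End$-component of $\widehat\cT$ on a decomposable tensor is literally $\tau(\argument)\pb$ applied fiberwise, with no correction terms coming from the homotopy $\hpt$ (the correction terms all involve $\hpt$, hence land in $\Gamma(\pi^*A[1])$, on which $\pb$ vanishes, so they do not affect the supertrace — but this needs to be said). Once that bookkeeping is in place, the supertrace computation is the short linear-algebra fact above, and the proof is complete.
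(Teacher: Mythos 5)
Your proposal is correct and follows essentially the same route as the paper: identify $\widehat\cT(\omega\otimes\Phi)=\cT_\Lambda(\omega)\otimes(\tau\Phi\pb)$ (which indeed carries no homotopy corrections, since the Hom-contraction inclusion of Lemma~\ref{lem:HomContraction} is just $g\mapsto\tau\circ g\circ\pb$), and then show $\str(\tau\Phi\pb)=\tr(\Phi)$. One formulation in your middle step is inaccurate, though: the image of the idempotent $\sigma=\tau\pb$ is \emph{not} a purely even subspace of the fiber, since $\tau=\ib-\hpt\cQ\,\ib$ has a nonzero component in the odd summand $\Gamma(\pi^*A[1])$, so the linear-algebra lemma as you state it does not literally apply. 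What saves the argument --- and what the paper uses, and what your final parenthetical already contains --- is that $\tau\Phi\pb$ differs from $\ib\Phi\pb$ only by the block $-\hpt\cQ\,\ib\Phi\pb$ mapping the even summand $\Gamma(\pi^*L)$ into the odd summand $\Gamma(\pi^*A[1])$; being off-diagonal, this block contributes nothing to the supertrace, whence $\str(\tau\Phi\pb)=\str(\ib\Phi\pb)=\tr(\Phi)$.
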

\begin{proof}
Recall that for $\omega   \in \Gamma\big(\pi^* (\Lambda B\dual)\big)$ and $\Phi \in \Gamma\big(\pi^* (\End B)\big)$, we have $\widehat\cT(\omega \otimes \Phi) = \cT_\Lambda(\omega) \otimes (\tau \Phi \pb)$, where $\tau   = \ib - \hpt \partial \, \ib $. Since $\im(\hpt \partial) \subset \Gamma(\pi^* A[1]) \subset \Gamma(\cL) \cong \Gamma(\pi^* A[1]) \oplus \Gamma(\pi^* L)$, the matrix representation of  $\tau \Phi \pb$ is of the form
$$
\begin{pmatrix}
\ib \Phi \pb & \; 0 \quad \\
- \hpt \partial \, \ib \Phi \pb & \; 0 \quad
\end{pmatrix},
$$
where the first row/column represents the even component $\Gamma(\pi^* L)$, and the second row/column represents the odd component $\Gamma(\pi^* A[1])$.
Thus, 
$$
\str(\tau \Phi \pb) = \str(\ib \Phi \pb) = \tr(\Phi).
$$
As a result, we have
$$
\str \widehat\cT(\omega \otimes \Phi) = \cT_\Lambda(\omega)  \str(\tau \Phi \pb)= \cT_\Lambda(\omega)  \tr(\Phi) = \cT_\Lambda(\tr(\omega \otimes \Phi)).
$$
This completes the proof.
\end{proof}

\begin{theorem}\label{thm:Todd}
The projection map $\Pi_\Lambda:\Gamma(\Lambda \cL\dual) \to \Gamma(\pi^* \Lambda B\dual)$ induces an isomorphism 
$$
(\Pi_\Lambda)_* :H^\bullet\big(\Gamma(\Lambda^k \cL\dual ), \cQ \big) \xto{\cong}  H_{\CE}^\bullet\big(A,\Lambda^k B\dual \big)
$$
for each $k$, and 
\begin{equation}\label{eq:2Todd}
(\Pi_\Lambda)_*(\Todd_\cL) = \Todd_{L/A}.
\end{equation}
\end{theorem}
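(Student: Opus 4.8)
The plan is to deduce the Todd-class identity from the Atiyah-class identity of Theorem~\ref{thm:Main:TwoAtiyahAreSame}, from Lemma~\ref{lem:Trace}, and from multiplicativity of the transfer maps, using that the Todd cocycle is a universal power series in the Atiyah cocycle. The cohomology isomorphism comes first, and it is formal: the contraction \eqref{eq:WedgeContraction}, built by Lemma~\ref{lem:HomContraction} and Proposition~\ref{prop:ExteriorAlgContraction} out of $\ib$, $\pb$, $\hpt$ and $\cQ$, is compatible with the grading of $\Gamma(\Lambda\cL\dual)$ (resp.\ $\Gamma(\pi^*\Lambda B\dual)$) by the number of $\cL\dual$- (resp.\ $B\dual$-) factors, since all four operators preserve that number. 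Hence \eqref{eq:WedgeContraction} splits as a product over $k\ge0$ of contractions whose small complex is $\big(\Gamma(\Lambda^\bullet A\dual\otimes\Lambda^k B\dual),d^{\Bott}\big)$, i.e.\ the Chevalley--Eilenberg complex computing $H_{\CE}^\bullet(A,\Lambda^k B\dual)$; so $\Pi_\Lambda$ is a quasi-isomorphism in each wedge degree, which gives the first assertion.

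For the Todd classes, I would first record that $\Ber\circ\exp=\exp\circ\str$ and $\det\circ\exp=\exp\circ\tr$; thus, writing $f(t)=\log\frac{t}{1-e^{-t}}$, one has $\Toddcocycle_\cL^{\nabla^\cL}=\exp\big(\str f(\cLAtiyah)\big)$ and $\Toddcocycle_{L/A}^{\nabla}=\exp\big(\tr f(\pairAtiyah\nabla)\big)$, and the class $\big[\exp(\str f(\eta))\big]$ attached to a degree-$1$ $\cQ$-cocycle $\eta\in\Gamma(\cL\dual\otimes\End\cL)$ depends only on $[\eta]$, because each $[\eta^{\wedge k}]$ does and $\str$ is a chain map. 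Since $\cT_\Lambda$ is an algebra morphism (Proposition~\ref{prop:ExteriorAlgContraction}) with $\Pi_\Lambda\cT_\Lambda=\id$, the map $(\Pi_\Lambda)_*=((\cT_\Lambda)_*)^{-1}$ is a ring isomorphism, so it is enough to prove $(\cT_\Lambda)_*(\Todd_{L/A})=\Todd_\cL$.

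To this end I would take $\nabla=\nabla^\cL$ as in Lemma~\ref{lem:pi^!Lcntn} and use the algebra-morphism inclusion $\widehat\cT\colon\Gamma(\pi^*(\Lambda B\dual\otimes\End B))\to\Gamma(\Lambda\cL\dual\otimes\End\cL)$ supplied by Corollary~\ref{cor:ToddContraction}. Because $\widehat\cT$ and $\cT_\Lambda$ are algebra morphisms they commute with $f$ and with $\exp$, so Lemma~\ref{lem:Trace} gives
\[
\cT_\Lambda\big(\Toddcocycle_{L/A}^{\nabla}\big)=\exp\big(\cT_\Lambda(\tr f(\pairAtiyah\nabla))\big)=\exp\big(\str f(\widehat\cT(\pairAtiyah\nabla))\big)=\Ber\Big(\frac{\widehat\cT(\pairAtiyah\nabla)}{1-e^{-\widehat\cT(\pairAtiyah\nabla)}}\Big).
\]
Next, by its construction in Section~\ref{sec:HomTensorConstruction} the map $\widehat\cT$ restricts, on the component with a single $B\dual$-factor, to the inclusion $\cT^1_2$ of \eqref{eq:AtiyahContraction}; since $(\Pi^1_2)_*(\cT^1_2)_*=\id$, Theorem~\ref{thm:Main:TwoAtiyahAreSame} shows that the $\cQ$-cocycle $\widehat\cT(\pairAtiyah\nabla)$ is cohomologous to $\cLAtiyah$. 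By the cohomology-invariance noted above, the right-hand side of the displayed identity is then cohomologous to $\Ber\big(\frac{\cLAtiyah}{1-e^{-\cLAtiyah}}\big)=\Toddcocycle_\cL^{\nabla^\cL}$, whence $(\cT_\Lambda)_*(\Todd_{L/A})=\Todd_\cL$, i.e.\ \eqref{eq:2Todd}.

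The main obstacle is the compatibility needed in the last paragraph: one must check that $\widehat\cT$ genuinely restricts to $\cT^1_2$ on the linear part --- so that the single-cocycle conclusion of Theorem~\ref{thm:Main:TwoAtiyahAreSame} can be invoked --- and must handle with care the interaction of the power series $f$ and of $\exp$ with the algebra morphisms $\widehat\cT$, $\cT_\Lambda$ and with the supertrace, as well as the passage to cohomology for wedge powers of $\End$-valued forms.
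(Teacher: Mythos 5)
Your proposal is correct and follows essentially the same route as the paper: decompose the contraction \eqref{eq:WedgeContraction} by wedge degree for the first assertion, reduce \eqref{eq:2Todd} to $(\cT_\Lambda)_*(\Todd_{L/A})=\Todd_\cL$, and then combine the algebra-morphism property of $\widehat\cT$ and $\cT_\Lambda$, Lemma~\ref{lem:Trace}, and Theorem~\ref{thm:Main:TwoAtiyahAreSame} to match the scalar Atiyah classes. The paper phrases the power-series step directly as $(\cT_\Lambda)_*(\tr\alpha_{L/A}^k)=\str\widehat\cT_*(\alpha_{L/A}^k)=\str\alpha_\cL^k$ rather than via $\Ber\circ\exp=\exp\circ\str$, but this is only a cosmetic difference.
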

\begin{proof}
Note that all the operators in the contraction \eqref{eq:WedgeContraction} respect to $\Lambda^k$. Thus, one can decompose \eqref{eq:WedgeContraction} to contractions for $\Lambda^k$, and the first assertion follows. 

Equation \eqref{eq:2Todd} is equivalent to 
$$
(\cT_\Lambda)_*(\Todd_{L/A}) =\Todd_\cL.
$$
Since the Todd classes can be expressed in terms of scalar Atiyah classes, it suffices to show that 
$$
(\cT_\Lambda)_*(\tr \alpha_{L/A}^k) =\str \alpha_{\cL}^k,
$$
for each $k$. Since $\widehat{\cT}_*$ is an algebra isomorphism, it follows from Lemma~\ref{lem:Trace} and Theorem~\ref{thm:Main:TwoAtiyahAreSame} that 
$$
(\cT_\Lambda)_*(\tr \alpha_{L/A}^k) = \str \widehat\cT_*(\alpha_{L/A}^k)= \str\big( \big((\cT^1_2)_*(\alpha_{L/A})\big)^k\big) =\str \alpha_{\cL}^k,
$$
where $(\cT^1_2)_*$ is induced by the contraction \eqref{eq:AtiyahContraction}.
\end{proof}

\subsection{Applications}\label{sec:Application}

\subsubsection{Integrable distributions}

Let $L=T_M$ be the tangent bundle of a manifold $M$. Let $A=F \subset T_M$ be a Lie subalgebroid whose sections form an integrable distribution. The pullback Lie algebroid 
$$
\pi^! T_M = T_{F[1]}\times_{T_M} T_M = T_{F[1]}
$$
can be identified with the Lie algebroid $T_{F[1]}$. Furthermore, the dg structure on $\Gamma( T_{F[1]})$ is given by $[s_{i_F},\argument]$, where $s_{i_F} = d_F \in \Gamma(T_{F[1]})$ is the Chevalley--Eilenberg differential. 
See Proposition~\ref{prop:DGLieAbdFromLiePair} and Lemma~\ref{lem:SecFromLieAbdMor}. Thus, by Theorem~\ref{prop:ContPullbackLieAbd}, we have the contraction data 
\begin{equation}\label{eq:CXX_contraction}
\begin{tikzcd}
\big(\Gamma(\Lambda^\bullet F\dual \otimes B),d^{\Bott}\big)
\arrow[r, " \tau ", shift left,hook] &  \big(\Gamma(T_{F[1]}), L_{d_F} \big)
\arrow[l, " \pb", shift left,two heads] \arrow[loop, "\hpt",out=12,in= -12,looseness = 3]
\end{tikzcd},
\end{equation}
where $B = T_M/F$. 
This contraction \eqref{eq:CXX_contraction} coincides with Chen--Xiang--Xu's contraction in \cite[Lemma~2.2]{MR3877426}. 

As a consequence of Theorem~\ref{thm:Main:TwoAtiyahAreSame}, we have the following 

\begin{corollary}
The contraction \eqref{eq:CXX_contraction} induces an isomorphism 
$$
H^1(\Gamma(T_{F[1]}\dual\otimes \End T_{F[1]}), L_{d_F}) \xto\cong H_{\CE}^1(F,B\dual\otimes \End B)
$$
which sends the Atiyah class of the dg manifold $F[1]$ to the Atiyah class of the Lie pair $(F,T_M)$.
\end{corollary}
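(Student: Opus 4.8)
The plan is to obtain the corollary as a direct specialization of Theorem~\ref{thm:Main:TwoAtiyahAreSame} to the Lie pair $(L,A) = (T_M,F)$, together with the identifications recorded in the paragraph preceding the statement. First I would recall that $\pi^! T_M = T_{F[1]}\times_{T_M} T_M$ is canonically isomorphic to $T_{F[1]}$, and that under this isomorphism the section $s_{i_F}\in\Gamma(\pi^! T_M)$ attached by Lemma~\ref{lem:SecFromLieAbdMor} to the inclusion $i_F:F\into T_M$ is the Chevalley--Eilenberg homological vector field $d_F\in\XX(F[1])$; hence the operator $\cQ = [s_{i_F},\argument]$ becomes the Lie derivative $L_{d_F}$ on $\Gamma(T_{F[1]})$. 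Thus the dg Lie algebroid $\cL = \pi^! T_M$ of Proposition~\ref{prop:DGLieAbdFromLiePair} is precisely the tangent dg Lie algebroid $(T_{F[1]}, L_{d_F})$ of the dg manifold $(F[1],d_F)$.

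Next I would observe that, by the definitions recalled in the preliminaries, the Atiyah class of the dg Lie algebroid $\cL = T_{F[1]}$ relative to itself is the case $\cE = \cL = T_{\cM}$ with $\cM = F[1]$, i.e.\ it is by definition the Atiyah class of the dg manifold $F[1]$; similarly the Atiyah class of the Lie pair $(T_M,F)$ is $\alpha_{T_M/F}\in H^1_{\CE}(F,B\dual\otimes\End B)$ with $B = T_M/F$. Theorem~\ref{thm:Main:TwoAtiyahAreSame} then supplies the isomorphism $(\Pi^1_2)_*:H^1\big(\Gamma(\cL\dual\otimes\End\cL),\cQ\big)\xto{\cong}H^1_{\CE}(F,B\dual\otimes\End B)$ together with the identity $(\Pi^1_2)_*(\alpha_\cL)=\alpha_{T_M/F}$, which is exactly the claim once the source is rewritten as $H^1(\Gamma(T_{F[1]}\dual\otimes\End T_{F[1]}),L_{d_F})$.

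It remains only to check that this isomorphism is the one ``induced by the contraction \eqref{eq:CXX_contraction}'', but this is immediate: the map $\Pi^1_2$ is assembled from the maps $\pb$ and $\tau$ of the perturbed contraction \eqref{eq:piL_contraction} through the homological--tensor construction of Section~\ref{sec:HomTensorConstruction}, and for $(L,A)=(T_M,F)$ the contraction \eqref{eq:piL_contraction} is literally \eqref{eq:CXX_contraction}, which by Theorem~\ref{prop:ContPullbackLieAbd} coincides with Chen--Xiang--Xu's contraction \cite[Lemma~2.2]{MR3877426}. There is no genuine difficulty here; the only point deserving attention is the bookkeeping of the identifications $\pi^! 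T_M\cong T_{F[1]}$ and $s_{i_F}\leftrightarrow d_F$, and this has already been carried out in the text, so the proof reduces to a short invocation of Theorem~\ref{thm:Main:TwoAtiyahAreSame}.
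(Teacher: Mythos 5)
Your proposal is correct and follows essentially the same route as the paper: the corollary is obtained by specializing Theorem~\ref{thm:Main:TwoAtiyahAreSame} to the Lie pair $(T_M,F)$ via the identifications $\pi^!T_M\cong T_{F[1]}$, $s_{i_F}\leftrightarrow d_F$, and $\cQ = L_{d_F}$, with the contraction \eqref{eq:piL_contraction} reducing to \eqref{eq:CXX_contraction}. Nothing further is needed.
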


Similarly, by Theorem~\ref{thm:Todd}, we have 

\begin{corollary}
The contraction \eqref{eq:CXX_contraction} induces an isomorphism 
$$
\prod_{k=0}^\infty H^k(\Gamma(\Lambda^k T_{F[1]}\dual), L_{d_F}) \xto\cong  \prod_{k=0}^\infty H_{\CE}^k(F,\Lambda^k B\dual)
$$
which sends the Todd class of the dg manifold $F[1]$ to the Todd class of the Lie pair $(F,T_M)$.
\end{corollary}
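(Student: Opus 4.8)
The plan is to obtain this corollary as a direct specialization of Theorem~\ref{thm:Todd} to the Lie pair $(L,A) = (T_M, F)$, so that essentially no new computation is needed. First I would recall the identification already set up in this subsection: by the tautological equality $\pi^! T_M = T_{F[1]}\times_{T_M} T_M = T_{F[1]}$, together with Proposition~\ref{prop:DGLieAbdFromLiePair} and Lemma~\ref{lem:SecFromLieAbdMor}, the pullback dg Lie algebroid $\cL = \pi^! T_M \to F[1]$ is precisely the tangent dg Lie algebroid $(T_{F[1]}, L_{d_F})$, where the distinguished section $s_{i_F}$ is the Chevalley--Eilenberg differential $d_F \in \Gamma(T_{F[1]})$ and $\cQ = [s_{i_F},\argument]$ becomes the Lie derivative $L_{d_F}$. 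Under this identification the contraction \eqref{eq:piL_contraction} becomes exactly \eqref{eq:CXX_contraction}, with $B = T_M/F$.

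Next I would apply Theorem~\ref{thm:Todd} verbatim with $\cL = \pi^! T_M$. The functorial exterior-algebra construction (Lemma~\ref{lem:HomContraction}, Proposition~\ref{prop:ExteriorAlgContraction}) applied to \eqref{eq:CXX_contraction} produces the contraction \eqref{eq:WedgeContraction}, whose projection $\Pi_\Lambda$ decomposes degree by degree and therefore yields the isomorphism
\[
(\Pi_\Lambda)_* : \prod_{k=0}^\infty H^k\big(\Gamma(\Lambda^k T_{F[1]}\dual), L_{d_F}\big) \xto{\cong} \prod_{k=0}^\infty H_{\CE}^k\big(F, \Lambda^k B\dual\big),
\]
sending $\Todd_{\pi^! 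T_M}$ to $\Todd_{T_M/F}$ by Theorem~\ref{thm:Todd}.

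Finally I would observe that, by the definitions recalled in Section~\ref{sec:Application} (namely $\cE = \cL = T_\cM$ with $\cM = F[1]$), the left-hand class $\Todd_{\pi^! T_M} = \Todd_{T_{F[1]}}$ is by definition the Todd class of the dg manifold $F[1]$, while $\Todd_{T_M/F}$ is the Todd class of the Lie pair $(T_M, F)$. Combining these identifications with the displayed isomorphism gives the statement. The only point I would take care to spell out is that the abstract contraction data furnished by Theorem~\ref{thm:Todd} agrees on the nose with the concrete contraction \eqref{eq:CXX_contraction} named in the corollary; but this is immediate, since both arise by applying the same exterior-algebra construction to the single contraction \eqref{eq:piL_contraction}. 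There is no genuine obstacle here: the corollary is a pure specialization, and the work has all been done in Theorems~\ref{thm:Main:TwoAtiyahAreSame} and~\ref{thm:Todd}.
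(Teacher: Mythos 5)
Your proposal is correct and matches the paper's own treatment: the paper derives this corollary as an immediate specialization of Theorem~\ref{thm:Todd} to the Lie pair $(T_M,F)$, using the identification $\pi^!T_M = T_{F[1]}$ with $\cQ = L_{d_F}$ and the convention that the Todd class of the dg Lie algebroid $T_{\cM}$ is by definition the Todd class of the dg manifold $\cM$. Your care in checking that the abstract contraction from Theorem~\ref{thm:Todd} coincides with \eqref{eq:CXX_contraction} is exactly the right (and only) point to verify.
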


We recover Chen--Xiang--Xu's theorems in \cite{MR3877426}. 
In particular, the Atiyah class and the Todd class of a complex manifold $X$ can be identified with the Atiyah class and the Todd class of the dg manifold $T_X^{0,1}[1]$, respectively. See \cite[Theorem~C]{MR3877426}.

\subsubsection{$\frakg$-manifolds}

Let $\frakg$ be a finite-dimensional Lie algebra. 
A {\bf $\frakg$-manifold} is a smooth manifold $M$ together with a Lie algebra action, i.e.\ a morphism of Lie algebras $\frakg \ni a \mapsto \hat{a} \in \XX(M)$. 
Given a $\frakg$-manifold $M$, the action Lie algebroid $\frakg \ltimes M$ and the tangent bundle $T_M$ naturally form a matched pair of Lie algebroids \cite{MR1460632}. Thus, we have a Lie pair $(L,A)$, where $L = (\frakg \ltimes M)\bowtie T_M$ and $A = \frakg \ltimes M$. 
See \cite{MR2534186} for its relation with BRST complexes.

More explicitly, the vector bundle $L$ is isomorphic to $(\frakg \times M) \oplus T_M$ as vector bundles over $M$, and the anchor $\anchorL: \Gamma(L) \to \XX(M)$ is given by the formula $\anchorL(a + X) = \hat{a} +X$. The bracket $[\argument,\argument]:\Gamma(L) \times \Gamma(L) \to \Gamma(L)$ is determined by 
$$
[a,b] = [a,b]_\frakg, \qquad [X,Y] = [X,Y]_{\XX(M)}, \qquad [a,X] = [\hat{a},X].
$$
Here $a,b \in \frakg$ are identified with the corresponding constant functions in $ C^\infty(M,\frakg)\cong \Gamma(A)$, $X$ and $Y$ are vector fields on $M$, $[\argument,\argument]_\frakg$ is the Lie bracket of $\frakg$, and $[\argument,\argument]_{\XX(M)}$ is the Lie bracket of vector fields.

Let $B$ be the quotient vector bundle $B=L/A \cong T_M$. In this case, the graded vector bundle $\cL = \pi^!L$ admits a natural Whitney sum decomposition over $A[1] \cong \frakg[1] \times M$:
$$
\cL \cong \pi^\ast A[1] \oplus \pi^\ast A \oplus \pi^\ast B,
$$
where 
$$
\pi^\ast A[1] \cong  ( \frakg[1] \times M) \times \frakg[1], \quad \pi^\ast A \cong ( \frakg[1] \times M) \times \frakg \quad \text{and} \quad \pi^\ast B \cong \frakg[1] \times T_M.
$$
Consequently, its space of sections admits the decomposition
$$
\Gamma(\cL) \cong ( \Lambda^\bullet\frakg\dual \otimes C^\infty(M)   \otimes \frakg[1]) \oplus (\Lambda^\bullet\frakg\dual \otimes C^\infty(M)   \otimes \frakg) \oplus (\Lambda^\bullet \frakg\dual \otimes \XX(M)). 
$$

Now we describe the contraction \eqref{eq:piL_contraction} in this situation. By \eqref{eq:cQ_A[1]} and \eqref{eq:cQ_L}, a direct computation shows that the differential $\cQ$ acts on $\Gamma(\cL)$ as follows:
$$
\begin{tikzcd}
\Gamma(\cL) \cong (\Lambda^\bullet\frakg\dual \otimes C^\infty(M)   \otimes \frakg[1]) \oplus (\Lambda^\bullet\frakg\dual \otimes C^\infty(M)   \otimes \frakg) \oplus (\Lambda^\bullet \frakg\dual \otimes \XX(M)) \ar[d,xshift=-5.8cm,"\cQ"'] \ar[d,start anchor={[xshift=-2.7cm]}, end anchor={[xshift=0.8cm]}] \ar[d,xshift=-3.2cm] \ar[d,xshift=1.3cm] \ar[d,xshift=4.9cm] \\
\Gamma(\cL) \cong (\Lambda^\bullet\frakg\dual \otimes C^\infty(M)   \otimes \frakg[1]) \oplus (\Lambda^\bullet\frakg\dual \otimes C^\infty(M)   \otimes \frakg) \oplus (\Lambda^\bullet \frakg\dual \otimes \XX(M))
\end{tikzcd}
$$
The homotopy operator $\hpt: \Gamma(\cL) \to \Gamma(\cL)$ is linear over $C^\infty(A[1]) \cong \Lambda^\bullet\frakg\dual \otimes C^\infty(M)$ and is determined by its values on the three components $\Gamma(\pi^\ast A[1])$, $\Gamma(\pi^\ast A)$ and $\Gamma(\pi^\ast B)$. The values of $\hpt$ on the components $\Gamma(\pi^\ast A[1])$ and $\Gamma(\pi^\ast B)$ vanish. On the component $\Gamma(\pi^\ast A)$, it is given by extending the degree-shifting map $\nshift:\frakg \to \frakg[1]$ via $C^\infty(A[1])$-linearity.  

For the small complex $(\Gamma(\pi^\ast B), d^{\Bott})$, it is clear that $\Gamma(A) \cong C^\infty(M,\frakg)$ and $\Gamma(B) \cong \XX(M)$. The Bott connection $\nabla^{\Bott}:C^\infty(M,\frakg) \times \XX(M) \to \XX(M)$ is determined by the formula 
$$
\nabla^{\Bott}_{a} X = [\hat{a},X], \qquad \forall\, a \in \frakg, \, X \in \XX(M),
$$ 
where an element $a \in \frakg$ is identified with the constant function with value $a$. 
The complex $(\Gamma(\pi^* B),d^{\Bott})$ coincides with the Chevalley--Eilenberg complex $(\Lambda^\bullet \frakg\dual \otimes \XX(M),d_{\CE})$ of the $\frakg$-action $\frakg \to \End\XX(M), \, a \mapsto L_{\hat{a}}$. 

According to Theorem~\ref{prop:ContPullbackLieAbd}, the projection map $\pb:\Gamma(\cL) \to \Gamma(\pi^\ast B)$ of the contraction \eqref{eq:piL_contraction} is the canonical projection onto $\Gamma(\pi^\ast B)$. 
Since the subset $ 0 \oplus 0 \oplus \Gamma(\pi^\ast B) \subset \Gamma(\cL)$ is stable under $\cQ$, it follows from \eqref{eq:tauMain} that the map $\tau$ coincides with the canonical inclusion in the case of $\frakg$-manifolds. 
As a consequence, the projection maps in the contractions \eqref{eq:AtiyahContraction} and \eqref{eq:WedgeContraction} are the canonical projections. 

By Theorem~\ref{thm:Main:TwoAtiyahAreSame} and Theorem~\ref{thm:Todd}, we have the following 

\begin{corollary}
The canonical projections 
$$\Pi^1_2: \big(\Gamma(\cL\dual \otimes \End \cL), \cQ\big)  \to \big(\Lambda^\bullet\frakg\dual \otimes \Gamma(T_M\dual \otimes \End T_M), d_{\CE}\big)
$$
and
$$
\Pi_\Lambda : \big( \Gamma(\Lambda^k \cL\dual), \cQ \big)  \to  \big( \Lambda^\bullet\frakg\dual \otimes \Gamma(\Lambda^k T_M\dual), d_{\CE}\big)
$$
are quasi-isomorphisms whose induced maps on cohomologies send the Atiyah class and the Todd class of the dg Lie algebroid $\cL=\pi^! L$
to the Atiyah class and the Todd class of the Lie pair $(L,A) = ((\frakg \ltimes M)\bowtie T_M,\frakg \ltimes M)$, respectively.
\end{corollary}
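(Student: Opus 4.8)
The plan is to obtain the Corollary as a direct specialization of Theorem~\ref{thm:Main:TwoAtiyahAreSame} and Theorem~\ref{thm:Todd}, after unwinding the identifications peculiar to the $\frakg$-manifold situation. First I would assemble the ingredients already in place from the preceding discussion: the matched pair $(\frakg\ltimes M)\bowtie T_M$ produces the Lie pair $(L,A)$ with $A=\frakg\ltimes M$, so $A[1]\cong\frakg[1]\times M$ and $C^\infty(A[1])\cong\Lambda^\bullet\frakg\dual\otimes C^\infty(M)$; the quotient bundle is $B=L/A\cong T_M$; and the Bott connection $\nabla^{\Bott}_aX=[\hat a,X]$ identifies $(\Gamma(\pi^\ast B),d^{\Bott})$ with the Chevalley--Eilenberg complex $(\Lambda^\bullet\frakg\dual\otimes\XX(M),d_{\CE})$ of the $\frakg$-action $a\mapsto L_{\hat a}$. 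By functoriality of the tensor and exterior constructions of Section~\ref{sec:HomTensorConstruction}, this upgrades to identifications $\big(\Gamma(\pi^\ast(B\dual\otimes\End B)),d^{\Bott}\big)\cong\big(\Lambda^\bullet\frakg\dual\otimes\Gamma(T_M\dual\otimes\End T_M),d_{\CE}\big)$ and $\big(\Gamma(\pi^\ast\Lambda^k B\dual),d^{\Bott}\big)\cong\big(\Lambda^\bullet\frakg\dual\otimes\Gamma(\Lambda^k T_M\dual),d_{\CE}\big)$, which are exactly the target complexes named in the statement.

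Second, I would verify that in this case the inclusion $\tau=\ib-\hpt\cQ\,\ib$ of the contraction \eqref{eq:piL_contraction} reduces to the canonical inclusion, so that $\Pi^1_2(\Theta)=\pb\circ\Theta\circ(\tau\otimes\tau)$ and $\Pi_\Lambda$ become the canonical projections claimed in the Corollary. By \eqref{eq:tauMain} it is enough to show that $0\oplus0\oplus\Gamma(\pi^\ast B)\subset\Gamma(\cL)$ is $\cQ$-stable, and this follows from the local formulas \eqref{eq:cQ_A[1]}--\eqref{eq:cQ_L}: working with the global frame of $A=\frakg\times M$ furnished by a basis of $\frakg$, the structure constants $c^k_{ij}$ with all indices $\le r$ are the constant structure constants of $\frakg$, so the $\peta k$-term in $\cQ(e_l)$ vanishes, while the remaining $e_k$-term involves only the coefficients of the bracket $[\hat e_i,e_l]$, which is a vector field and hence has no $A$-component. (This $\cQ$-stability was already recorded in the discussion above.)

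Finally, the conclusion is immediate. The contraction data \eqref{eq:AtiyahContraction} and the $\Lambda^k$-graded pieces of \eqref{eq:WedgeContraction} make $\Pi^1_2$ and each $\Pi_\Lambda|_{\Lambda^k}$ quasi-isomorphisms; Theorem~\ref{thm:Main:TwoAtiyahAreSame} gives $(\Pi^1_2)_\ast(\alpha_\cL)=\alpha_{L/A}$ and Theorem~\ref{thm:Todd} gives $(\Pi_\Lambda)_\ast(\Todd_\cL)=\Todd_{L/A}$, and under the canonical identifications just set up these are precisely the Atiyah and Todd classes of the Lie pair $((\frakg\ltimes M)\bowtie T_M,\frakg\ltimes M)$. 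I do not expect a genuine obstacle: the one computational point is the $\cQ$-stability of the $\pi^\ast B$-summand, which is the short check indicated above once the matched-pair bracket relations are invoked.
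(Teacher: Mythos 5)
Your proposal is correct and follows essentially the same route as the paper: set up the identifications for the $\frakg$-manifold case, observe that $0\oplus 0\oplus\Gamma(\pi^\ast B)$ is $\cQ$-stable so that $\tau=\ib$ and the projections become the canonical ones, and then invoke Theorem~\ref{thm:Main:TwoAtiyahAreSame} and Theorem~\ref{thm:Todd}. Your explicit verification of the $\cQ$-stability via the local formulas \eqref{eq:cQ_A[1]}--\eqref{eq:cQ_L} (constancy of the $\frakg$-structure constants killing the $\peta{k}$-term, and $[\hat a,X]$ having no $A$-component) is exactly the ``direct computation'' the paper leaves implicit.
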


\appendix

\section{Contractions over a dg ring}\label{sec:HPL}

A contraction is an algebraic analogue of a deformation retract. The key proposition for us is the homological perturbation lemma which is an algebraic tool for perturbing a contraction to another contraction. See, for example, \cite{MR56295,MR2760671,2004math......3266C}. 
In this appendix, we summarize the necessary facts about contractions and the homological perturbation lemma.

We formulate contractions by characterizing the homotopy operator. 
In this formulation, one needs only a complex and a homotopy operator satisfying certain conditions, while in the classical formulation \cite{MR56295}, one needs a small complex, a projection map and an inclusion map in addition. 
One can find an $L_\infty$ version of our formulation in \cite[Appendix~B]{2020arXiv200601376B}. 
In Section~\ref{sec:CompareHLP}, we describe how one can generate the additional data in the usual definition of contraction data from the homotopy operator.

\subsection{Homological perturbation lemma}

Let $R =(R,d_R)$ be a commutative dg ring.
 
\begin{definition}\label{def:ContractionCptDef}
A \textbf{contraction} over $R$ is a triple $(W,\delta; h)$, where $(W,\delta)$ is a dg module over $R$, and $h$ is an  $R$-linear operator $h:W^p \to W^{p-1}$ of degree $-1$ such that 
$$
h^2 =0 \qquad \text{and} \qquad h \delta h =h.
$$
The operator $h$ is referred to as the \textbf{homotopy operator} of the contraction $(W,\delta; h)$. 
A \textbf{perturbation} $\partial$ of $(W,\delta)$ over $R$ is an $R$-linear  operator $\partial: W^p \to W^{p+1}$ of degree $+1$ such that 
$$
(\delta + \partial)^2 =0.
$$
\end{definition}

Note that if $\partial$ is a perturbation
of a dg module $(W,\delta)$ over $R$, then $(W,\delta + \partial)$
is also a dg module over $R$.

\begin{definition}
We say a perturbation $\partial$ is a \textbf{small perturbation} of a contraction $(W,\delta; h)$ if there exists a  descending exhaustive (i.e. $\cup_q F^q(W) = W$) complete (i.e. $W = \varprojlim_q W/F^qW$) filtration 
$$
F = \cdots \supset F^q W \supset F^{q+1} W \supset \cdots
$$ 
of the space $W$ (\emph{not} necessarily compatible with $\delta$) such that 
$$
(\partial h)(F^q W) \subset F^{q+1}W, \qquad \forall q.
$$
\end{definition}

The following theorem is well-known.

\begin{theorem}[Homological perturbation lemma]\label{thm:HPLCptForm}
Let $\partial$ be a small perturbation of a contraction $(W,\delta;h)$ over $R$.
Then (i) the operators $\id+h\partial$ and $\id+\partial h$ are invertible
and their inverses are the convergent series
\[ (\id+h\partial)^{-1}=\sum_{k=0}^\infty (-h\partial)^k
\qquad\text{and}\qquad
(\id+\partial h)^{-1}=\sum_{k=0}^\infty (-\partial h)^k ;\]
(ii) the operator
\begin{equation}\label{eq:PertHpt}
h_\partial:=(\id+h\partial)^{-1}h=h(\id+\partial h)^{-1}
\end{equation}
is well-defined; and (iii) the triple $(W,\delta +\partial; h_\partial)$
forms a contraction over $R$.
\end{theorem}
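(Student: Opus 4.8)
The plan is to prove the Homological Perturbation Lemma (Theorem~\ref{thm:HPLCptForm}) by the standard geometric-series argument, with the smallness hypothesis used precisely to guarantee convergence in the filtration topology. I would proceed in four steps.

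\medskip

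\noindent\textbf{Step 1: Convergence of the geometric series.}
First I would show that the series $\sum_{k=0}^\infty(-h\partial)^k$ and $\sum_{k=0}^\infty(-\partial h)^k$ converge. The smallness condition gives $(\partial h)(F^qW)\subset F^{q+1}W$ for all $q$; since $h$ is $R$-linear and the filtration is by $R$-submodules, one also gets $(h\partial)(F^qW)\subset F^{q+1}W$ after noting $h\partial = h(\partial h)h^{-1}$ informally — more carefully, from $h^2=0$ and $h\delta h = h$ one checks $(h\partial)^k = h(\partial h)^{k-1}\partial$, so $(h\partial)^k(W)\subset (h\partial)(F^0 W+\cdots)\subset\cdots$ lands in deeper and deeper filtration pieces. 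Hence for each $w$, the partial sums of $\sum(-h\partial)^k w$ form a Cauchy sequence in the complete filtered module $W = \varprojlim W/F^qW$, so the series converges; likewise for $\sum(-\partial h)^k$. This is the step where I expect to have to be most careful, since one must verify that the filtration pieces absorb iterated applications at the correct rate and that $R$-linearity of $h$ (together with $d_R$-compatibility being irrelevant here) is enough; $\partial$ itself need not respect $F$.

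\medskip

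\noindent\textbf{Step 2: The inverses.}
With convergence in hand, $(\id+h\partial)\sum_{k\ge0}(-h\partial)^k = \id$ and $\sum_{k\ge0}(-h\partial)^k(\id+h\partial)=\id$ are telescoping identities that hold at the level of partial sums modulo $F^q$ and pass to the limit, proving (i). The identity $h(\id+\partial h)^{-1} = (\id+h\partial)^{-1}h$ — hence the well-definedness of $h_\partial$ in (ii) — follows from the elementary algebraic identity $h(\id+\partial h) = (\id+h\partial)h$, rearranged and multiplied by the two inverses.

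\medskip

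\noindent\textbf{Step 3: Verifying $h_\partial$ is a homotopy operator.}
For (iii) I would check $h_\partial^2 = 0$ and $h_\partial(\delta+\partial)h_\partial = h_\partial$. Using $h_\partial = h(\id+\partial h)^{-1}$, the identity $h^2=0$, and the commutation relation from Step~2, one computes $h_\partial^2 = (\id+h\partial)^{-1}h\cdot h(\id+\partial h)^{-1} = 0$. For the side conditions, write $\delta' = \delta+\partial$ and use $h\delta h = h$ together with the series expansions; the computation is a finite rearrangement once one knows $(\id+h\partial)^{-1}h\delta = h\delta(\id+h\partial)^{-1}$-type identities, which again follow from $h\delta h=h$. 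I would also record that $h_\partial$ is $R$-linear of degree $-1$, which is immediate since $h$, $\partial$, and composition preserve $R$-linearity (degrees: $h$ has degree $-1$, $\partial h$ has degree $0$).

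\medskip

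\noindent\textbf{Step 4: $(W,\delta')$ is a dg module.}
Finally, $(\delta+\partial)^2=0$ is exactly the hypothesis that $\partial$ is a perturbation, and $\delta+\partial$ is $R$-linear of degree $+1$ and a derivation over $(R,d_R)$ in the appropriate sense because both $\delta$ and $\partial$ are; so $(W,\delta')$ is a dg $R$-module and $(W,\delta';h_\partial)$ is a contraction over $R$.

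\medskip

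The main obstacle is Step~1: making the convergence argument airtight requires tracking how iterated compositions interact with the filtration when only $\partial h$ (and not $\partial$ alone) is controlled, and invoking completeness in the correct form $W=\varprojlim W/F^qW$ to extract the limit. Everything after that is formal manipulation with the relations $h^2=0$, $h\delta h=h$, and $(\delta+\partial)^2=0$.
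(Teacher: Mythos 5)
Your proposal is correct and follows essentially the same route as the paper: convergence of the geometric series via the filtration, the elementary commutation $h(\id+\partial h)=(\id+h\partial)h$ to make $h_\partial$ well defined, and then direct verification of the two contraction axioms from $h^2=0$ and $h\delta h=h$. (The paper dismisses (i) and (ii) as immediate; your Step~1, which isolates the genuine subtlety that only $\partial h$ is filtration-raising and resolves it by the regrouping $(h\partial)^k=h(\partial h)^{k-1}\partial$, is a welcome expansion, though note that this regrouping is pure associativity and needs neither $h^2=0$ nor $h\delta h=h$.) One correction in Step~3: the intermediate identity you write, $(\id+h\partial)^{-1}h\delta=h\delta(\id+h\partial)^{-1}$, is false in general (it is equivalent to $h\partial=h\partial h\delta$). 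The correct pivot, and the one the paper uses, is
\[
h(\delta+\partial)h=h\delta h+h\partial h=h+h\partial h=h(\id+\partial h)=(\id+h\partial)h,
\]
after which sandwiching between $(\id+h\partial)^{-1}$ and $(\id+\partial h)^{-1}$ gives $h_\partial(\delta+\partial)h_\partial=(\id+h\partial)^{-1}h=h_\partial$ in one line; with that substitution your argument closes completely.
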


The contraction $(W,\delta +\partial; h_\partial)$ is referred to as the \textbf{perturbed contraction}.

\begin{proof}
The first two assertions are immediate.
It follows from the equation \eqref{eq:PertHpt} and $h^2=0$ that
\[ h_\partial h_\partial=(\id+h\partial)^{-1}hh(\id+\partial h)^{-1}=0 .\]
From $h\delta h=h$, we obtain
\[ h(\delta+\partial)h=h+h\partial h=h(\id+\partial h) .\]
It follows from the equation \eqref{eq:PertHpt} that
\[ h_\partial(\delta+\partial)h_\partial
=(\id+h\partial)^{-1}h(\delta+\partial)h(\id+\partial h)^{-1}
% =(\id+h\partial)^{-1}h(\id+\partial h)(\id+\partial h)^{-1}
=(\id+h\partial)^{-1}h=h_\partial .\qedhere \]
\end{proof}

\subsection{Classical formulation of contractions}\label{sec:CompareHLP}

Let $(V,d)$ and $(W,\delta)$ be two dg modules over a commutative dg ring $R$. 
A \textbf{contraction data} is the data   
$$
\begin{tikzcd}
(V,d) \arrow[r, " \tau", shift left,hook] &  (W ,\delta)
\arrow[l, " \sigma", shift left,two heads] \arrow[loop, "h",out=12,in= -12,looseness = 3]
\end{tikzcd}
$$ 
where $\tau:V\to W$ is an injective $R$-linear cochain map,
$\sigma:W\to V$ is a surjective $R$-linear cochain map, 
$h: W \to W$ is an $R$-linear map of degree $-1$, and 
\begin{gather*}
\sigma\tau=\id_V, \qquad \id_W-\tau\sigma=h\delta+\delta h, \\
\sigma h=0, \qquad h\tau=0, \qquad h h=0 .
\end{gather*} 
The space $V$ is referred to as the \textbf{small space} of the contraction data.
The maps $\tau$ and $\sigma$ will be referred to respectively as the \textbf{injection} (or the \textbf{inclusion map})
and the \textbf{surjection} (or the \textbf{projection map}) of the contraction. 
We refer the reader to \cite{MR2760671} for the basic properties of contraction data.

Let $(W,\delta; h)$ be a contraction in the sense of Definition~\ref{def:ContractionCptDef}.
Since $h\delta h=h$, $hh=0$, and $\delta\delta=0$,
the operators $\delta h$, $h \delta$ and $[\delta,h]$ are projection operators.
Consider the subspace
\[ V:=\im(\id - [\delta,h])=\ker [\delta, h]= \ker(\delta h) \cap \ker(h \delta) \]
of $W$, let $\tau:V\into W$ denote the inclusion of $V$ into $W$,
and let $\sigma:W\onto V$ the surjection induced by the projection
operator $\varpi:=\id-[\delta,h]$.
Note that, since $[\delta,h]$ is $R$-linear, $V$ is an $R$-module.
Furthermore, since $\delta^2=0$, we have 
\[ \varpi\delta=(\id-h\delta-\delta h)\delta
=\delta-\delta h\delta=\delta(\id-h\delta-\delta h)=\delta\varpi ,\]
which shows that $V=\im(\varpi)$ is a subcomplex of $(W,\delta)$.

The next lemma follows from a direct computation. 

\begin{lemma}
The data 
$$
\begin{tikzcd}
(V,\delta|_V)
\arrow[r, " \tau", shift left,hook] &  (W ,\delta)
\arrow[l, " \sigma", shift left,two heads] \arrow[loop, "h",out=12,in= -12,looseness = 3]
\end{tikzcd}
$$
induced by $(W,\delta;h)$ forms a contraction data.
\end{lemma}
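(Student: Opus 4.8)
The plan is to extract every clause in the definition of contraction data from the two structural identities $h^{2}=0$ and $h\delta h=h$ (together with $\delta^{2}=0$), with all the bookkeeping organized around the operator $\varpi:=\id_{W}-[\delta,h]$. First I would check that $\varpi$ is an $R$-linear idempotent: expanding $[\delta,h]^{2}=(\delta h+h\delta)^{2}$, the mixed terms $\delta h\,h\delta$ and $h\delta\,\delta h$ vanish by $h^{2}=0$ and $\delta^{2}=0$, while $\delta h\delta h=\delta(h\delta h)=\delta h$ and $h\delta h\delta=(h\delta h)\delta=h\delta$ by $h\delta h=h$, so $[\delta,h]^{2}=[\delta,h]$ and hence $\varpi^{2}=\varpi$. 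Since $[\delta,h]$ is $R$-linear (as already noted in the excerpt), $\varpi$ is an $R$-linear idempotent, whence $W=V\oplus\ker\varpi$ with $V=\im\varpi=\ker[\delta,h]$, and $\varpi$ restricts to the identity on $V$.

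Next I would identify the data of the contraction: let $\tau:V\into W$ be the inclusion and $\sigma:W\onto V$ the corestriction of $\varpi$, both $R$-linear. The previous step gives $\sigma\tau=\varpi|_{V}=\id_{V}$ and $\tau\sigma=\varpi$ as endomorphisms of $W$, so $\id_{W}-\tau\sigma=[\delta,h]=\delta h+h\delta$, which is exactly the required homotopy identity. That $\tau$ is a cochain map is the assertion that $V$ is a $\delta$-stable $R$-submodule, which is the already-established relation $\varpi\delta=\delta\varpi$; the same relation, read on all of $W$, shows that $\sigma$ is a cochain map. It then remains only to verify the side conditions. One has $h^{2}=0$ by hypothesis; $\sigma h=0$ follows from $\varpi h=h-\delta h^{2}-h\delta h=h-0-h=0$; and $h\tau=0$ follows because $h$ annihilates $V=\im\varpi$, since $h\varpi=h-h\delta h-h^{2}\delta=h-h-0=0$. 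This exhausts the list of axioms.

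I do not expect a genuine obstacle — as the excerpt itself says, this is a direct computation, and every identity invoked above is already available. The only point deserving attention is compatibility with the dg ring $R$: the homotopy $h$ is $R$-linear of degree $-1$, whereas $\delta$ satisfies only the graded Leibniz rule over $R$. However, since the verification uses exclusively the sign-free operator identities $h^{2}=0$, $h\delta h=h$, $\delta^{2}=0$ and the already-recorded $R$-linearity of $[\delta,h]$, the Koszul signs never actually intervene, so the argument is purely formal.
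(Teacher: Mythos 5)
Your proof is correct and is precisely the direct computation the paper alludes to: all five axioms of a contraction data are extracted from $h^2=0$, $h\delta h=h$, $\delta^2=0$, and the identity $\varpi\delta=\delta\varpi$ already established in the surrounding text, with the side conditions $\sigma h=0$ and $h\tau=0$ correctly reduced to $\varpi h=0$ and $h\varpi=0$. Nothing further is needed.
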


Let $F$ be an exhaustive complete filtration on $W$,
and $\partial$ be a perturbation of $(W,\delta)$
which satisfies the assumptions of Theorem~\ref{thm:HPLCptForm}.
Consequently, the operators $\id +\partial h$ and $\id +h\partial$ are invertible
and we have a perturbed contraction $(W,\delta+\partial; h_\partial)$
with induced subcomplex
$$
V_\partial := \im(\id_W - [\delta+\partial , h_\partial]) 
=\ker([\delta+\partial , h_\partial]) 
= \ker\big((\delta+\partial)h_\partial\big) \cap \ker\big(h_\partial(\delta+\partial)\big)
,$$
the image of the projection operator $\varpi_\partial=\id-[\delta+\partial,h_\partial]$.

Hence, we obtain the contraction data
\begin{equation}\label{uranus} \begin{tikzcd}
(V_\partial, (\delta+\partial)|_{V_\partial})
\arrow[r, shift left,hook, "\psi"] &  (W ,\delta+\partial)
\arrow[l, shift left,two heads, "\varphi"]
\arrow[loop, "h_\partial",out=12,in= -12,looseness = 3]
\end{tikzcd} \end{equation}
where $\psi$ denotes the inclusion of $V_\partial=\im(\varpi_\partial)$ into $W$
and $\varphi:W\onto V_\partial$ is the surjection induced by the projection operator
$\varpi_\partial$.

We note that
\begin{equation}\label{sun}
(\id+\partial h)^{-1}=\sum_{k=0}^\infty (-\partial h)^k=\id-\partial h_\partial
\end{equation}
and
\begin{equation}\label{moon}
(\id+h \partial)^{-1}=\sum_{k=0}^\infty (-h\partial)^k=\id- h_\partial \partial
.\end{equation}

\begin{lemma}\label{neptune}
The following diagram is commutative:
\[ \begin{tikzcd}[column sep=huge,row sep=huge]
W \arrow[dd, bend right=75, "(\id+h\partial)^{-1}"'] && 
W \arrow[ll, "\varpi=\id-h\delta-\delta h"'] \arrow[ld, "\id-\delta h"] \\
& W \arrow[lu, "\id-h\delta"] \arrow[ld, "\id-h_\partial(\delta+\partial)"'] & \\
W && W \arrow[uu, bend right=75, "(\id+\partial h)^{-1}"']
\arrow[lu, "\id-(\delta+\partial)h_\partial"']
\arrow[ll, "\varpi_\partial=\id-h_\partial(\delta+\partial)-(\delta+\partial)h_\partial"]
\end{tikzcd} \]
\end{lemma}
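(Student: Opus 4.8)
The plan is to verify the commutativity of the diagram one elementary cell at a time; each identity reduces to a short algebraic manipulation using only $\delta^2=0$, $(\delta+\partial)^2=0$, $h^2=0$, the two presentations $h_\partial=(\id+h\partial)^{-1}h=h(\id+\partial h)^{-1}$ of the perturbed homotopy operator from \eqref{eq:PertHpt}, and the series identities \eqref{sun} and \eqref{moon}. From \eqref{eq:PertHpt} and \eqref{moon} one obtains the auxiliary relation $(\id-h_\partial\partial)h=h_\partial$, and symmetrically from \eqref{eq:PertHpt} and \eqref{sun} one obtains $h(\id-\partial h_\partial)=h_\partial$; these are the only nontrivial inputs.

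First I would dispatch the two triangular cells. Expanding $(\id-h\delta)(\id-\delta h)=\id-h\delta-\delta h+h\delta^2 h$ and using $\delta^2=0$ gives exactly $\varpi=\id-h\delta-\delta h$, which is the upper triangle. The lower triangle is the same computation with $(\delta,h)$ replaced by $(\delta+\partial,h_\partial)$ and $\delta^2=0$ replaced by $(\delta+\partial)^2=0$, producing $\varpi_\partial=\id-h_\partial(\delta+\partial)-(\delta+\partial)h_\partial$.

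Next I would treat the left-hand cell, i.e.\ the identity $(\id+h\partial)^{-1}(\id-h\delta)=\id-h_\partial(\delta+\partial)$. Rewriting $(\id+h\partial)^{-1}=\id-h_\partial\partial$ by \eqref{moon} and expanding, the cross term reorganizes as $-(\id-h_\partial\partial)h\delta=-h_\partial\delta$ via the auxiliary relation above, and the whole expression collapses to $\id-h_\partial\delta-h_\partial\partial$. The right-hand cell, $(\id-\delta h)(\id+\partial h)^{-1}=\id-(\delta+\partial)h_\partial$, is the mirror computation: use $(\id+\partial h)^{-1}=\id-\partial h_\partial$ from \eqref{sun}, reorganize the cross term as $-\delta h(\id-\partial h_\partial)=-\delta h_\partial$, and read off $\id-\delta h_\partial-\partial h_\partial$. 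Finally, the remaining cells — and in particular the outer square $\varpi_\partial=(\id+h\partial)^{-1}\,\varpi\,(\id+\partial h)^{-1}$ — follow formally by composing the cells already verified, inserting the factorization $\varpi=(\id-h\delta)(\id-\delta h)$ in the middle and regrouping. There is no genuine obstacle here, as the content is entirely formal; the only thing to be careful about is the bookkeeping, namely tracking which of the two equal expressions for $h_\partial$ is the convenient one on each side of the diagram and not conflating $h_\partial\partial$ with $\partial h_\partial$.
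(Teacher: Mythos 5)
Your proof is correct and follows essentially the same route as the paper: verify the two triangles from $\delta^2=0$ and $(\delta+\partial)^2=0$, then establish the two side cells $(\id+h\partial)^{-1}(\id-h\delta)=\id-h_\partial(\delta+\partial)$ and $(\id-\delta h)(\id+\partial h)^{-1}=\id-(\delta+\partial)h_\partial$ from the two presentations of $h_\partial$ together with \eqref{sun} and \eqref{moon}, and compose. Your auxiliary relations $(\id-h_\partial\partial)h=h_\partial$ and $h(\id-\partial h_\partial)=h_\partial$ are exactly the identities used (in reverse order) in the paper's equations \eqref{eq:1-hd} and \eqref{eq:1-dh}.
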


\begin{proof}
It follows immediately from $\delta^2=0$
that $(\id-h\delta)(\id-\delta h)=\varpi$.

Likewise, it follows immediately from $(\delta+\partial)^2=0$
that $\big(\id-h_\partial(\delta+\partial)\big)
\big(\id-(\delta+\partial)h_\partial\big)=\varpi_\partial$.

It follows from the equations \eqref{sun} and \eqref{eq:PertHpt} that
\begin{equation}\label{eq:1-dh}
\id-(\delta+\partial)h_\partial=(\id-\partial h_\partial)-\delta h_\partial
=(\id+\partial h)^{-1}-\delta h(\id+\partial h)^{-1}=(\id-\delta h)(\id+\partial h)^{-1}
.\end{equation}

Likewise, it follows from the equations \eqref{moon} and \eqref{eq:PertHpt} that
\begin{equation}\label{eq:1-hd}
\id-h_\partial(\delta+\partial)=(\id-h_\partial \partial)-h_\partial\delta
=(\id+h\partial)^{-1}-(\id+h\partial)^{-1}h\delta=(\id+h\partial)^{-1}(\id-h\delta)
.\end{equation}

The proof is complete.
\end{proof}

In the diagram of Lemma~\ref{neptune}, all straight arrows are projection operators,
while the two bended arrows are automorphisms of the graded $R$-module $W$.
Since \[ (\id+h\partial)^{-1}\circ\varpi\circ(\id+\partial h)^{-1}=\varpi_\partial ,\]
the automorphism $(\id+h\partial)\inv$ of the $R$-module $W$
identifies the submodules $V=\im(\varpi)$ and $V_\partial=\im(\varpi_\partial)$.

Hence, we obtain the commutative diagram
% \[ \begin{tikzcd}[column sep=huge,row sep=huge]
% W \arrow[ddd, "(\id+h\partial)^{-1}"'] && 
% W \arrow[ll, bend right,
% "\varpi=\id-h\delta-\delta h"'] \arrow[ld, two heads, "\sigma"] \\
% & W \arrow[lu, hook, "\tau"] \arrow[d] & \\
% & W \arrow[ld, hook] & \\
% W && W \arrow[uuu, "(\id+\partial h)^{-1}"'] \arrow[lu, two heads]
% \arrow[ll, bend left,
% "\varpi_\partial=\id-h_\partial(\delta+\partial)-(\delta+\partial)h_\partial"]
% \end{tikzcd} \]
\begin{equation}\label{pluto}
\begin{tikzcd}[column sep=huge,row sep=huge]
W \arrow[d, "(\id+h\partial)^{-1}"', "\cong"] &
V \arrow[l, hook, "\tau"'] \arrow[d, shift right, "(\id+h\partial)^{-1}"'] & 
W \arrow[ll, bend right,
"\varpi=\id-h\delta-\delta h"'] \arrow[l, two heads, "\sigma"'] \\
W & V_\partial \arrow[l, hook, "\psi"] \arrow[u, shift right, "\id+h\partial"'] &
W \arrow[u, "(\id+\partial h)^{-1}"', "\cong"] \arrow[l, two heads, "\varphi"]
\arrow[ll, bend left,
"\varpi_\partial=\id-h_\partial(\delta+\partial)-(\delta+\partial)h_\partial"]
\end{tikzcd}
\end{equation}

\begin{definition}\label{def:PertOp}
The composition $V \xto{(\id+h\partial)^{-1}} V_\partial \xto{\psi} W$ will be referred to as
the \textbf{perturbed injection} and denoted by $\tau_\partial$.
The composition $W \xto{\varphi} V_\partial \xto{\id+h\partial} V$ 
will be referred to as the \textbf{perturbed surjection} and denoted by $\sigma_\partial$.
Since $V_\partial$ is stable under the differential $\delta+\partial$,
the composition $(\id+h\partial)\circ(\delta+\partial)\circ(\id+h\partial)^{-1}$
stabilizes $V$ and determines a differential on $V$, which we will refer to as
the \textbf{perturbed differential} and denote by $\delta_\partial$.
\end{definition}

From \eqref{uranus} and \eqref{pluto},
we obtain the contraction data
\begin{equation}\begin{tikzcd}
(V, \delta_\partial)
\arrow[r, shift left, hook, "\tau_\partial"] &  (W ,\delta+\partial)
\arrow[l, shift left, two heads, "\sigma_\partial"]
\arrow[loop, "h_\partial",out=12,in= -12,looseness = 3]
\end{tikzcd} \end{equation}

\begin{corollary}\label{cor:FormulaPertOp}
Under the assumptions of Theorem~\ref{thm:HPLCptForm}, we have
\begin{gather*}
\tau_\partial = (\id + h\partial)\inv \circ \tau 
= \sum_{k=0}^\infty (-h \partial)^k \tau \\
\sigma_\partial = \sigma \circ (\id + \partial h)\inv 
= \sum_{k=0}^\infty \sigma (-\partial h)^k \\
\delta_\partial - \delta|_V 
= \sigma \circ\partial \circ (\id + h \partial)\inv \tau 
= \sum_{k=0}^\infty \sigma \partial(-h\partial)^k \tau.
\end{gather*}
\end{corollary}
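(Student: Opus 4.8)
The plan is to read $\tau_\partial$, $\sigma_\partial$ and $\delta_\partial$ straight off Definition~\ref{def:PertOp} and the commutative diagram~\eqref{pluto}, reducing each of the three formulas to the elementary identities assembled in Section~\ref{sec:CompareHLP} together with the Neumann series of Theorem~\ref{thm:HPLCptForm}(i).

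For the perturbed injection I would note that, by Definition~\ref{def:PertOp}, $\tau_\partial$ is the composite $V \xto{(\id+h\partial)\inv} V_\partial \xto{\psi} W$, where $\psi$ is the tautological inclusion of $V_\partial=\im(\varpi_\partial)$ and the first arrow is the restriction of the automorphism of $W$ carrying $V$ onto $V_\partial$ (cf.\ the paragraph preceding~\eqref{pluto}); since $\tau:V\into W$ is likewise the tautological inclusion, this composite is exactly $(\id+h\partial)\inv\circ\tau$, and substituting the convergent series $(\id+h\partial)\inv=\sum_{k=0}^\infty(-h\partial)^k$ gives the first identity. Dually, $\sigma_\partial$ is the composite $W \xto{\varphi} V_\partial \xto{\id+h\partial} V$, hence, viewed as an endomorphism of $W$, equals $(\id+h\partial)\circ\varpi_\partial$; from the identity $(\id+h\partial)\inv\circ\varpi\circ(\id+\partial h)\inv=\varpi_\partial$ recorded just before~\eqref{pluto} one gets $(\id+h\partial)\circ\varpi_\partial=\varpi\circ(\id+\partial h)\inv$, and since $\sigma$ is by definition $\varpi$ with codomain corestricted to $V$, this reads $\sigma_\partial=\sigma\circ(\id+\partial h)\inv=\sum_{k=0}^\infty\sigma(-\partial h)^k$.

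The third formula is where the actual computation lives, and I expect it to be the main obstacle — not conceptually, but because one must invoke both homotopy identities, the one for $h$ in $(W,\delta;h)$ and the one for $h_\partial$ in the perturbed contraction, in the right order, together with $(\delta+\partial)^2=0$. Fixing $v\in V$ (so that $\tau v=v$ and $\delta|_V v=\delta v$), I would use $(\id+h\partial)\inv=\id-h_\partial\partial$ from~\eqref{moon}, the relation $(\delta+\partial)h_\partial=\id-\varpi_\partial-h_\partial(\delta+\partial)$ coming from the definition of $\varpi_\partial$, and the consequence $(\delta+\partial)\partial v=-\partial\delta v$ of $(\delta+\partial)^2=0$ and $\delta^2=0$, to simplify $(\delta+\partial)(\id+h\partial)\inv v=\delta v+\varpi_\partial\partial v-h_\partial\partial\delta v$. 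Applying $\id+h\partial$ and using $(\id+h\partial)h_\partial=h$ from~\eqref{eq:PertHpt}, the spurious term $h\partial\delta v$ cancels and one is left with $\delta_\partial v=\delta v+(\id+h\partial)\varpi_\partial\partial v$. Finally, reusing $(\id+h\partial)\varpi_\partial=\sigma\circ(\id+\partial h)\inv$ from the previous step together with the one-line fact that $\partial$ intertwines $\partial h$ with $h\partial$, hence $(\id+\partial h)\inv$ with $(\id+h\partial)\inv$, I would conclude $(\id+h\partial)\varpi_\partial\partial=\sigma\partial(\id+h\partial)\inv$ and therefore $\delta_\partial v=\delta v+\sigma\partial(\id+h\partial)\inv\tau v$, which is the claimed identity once the Neumann series for $(\id+h\partial)\inv$ is expanded. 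No convergence questions arise beyond those already settled in Theorem~\ref{thm:HPLCptForm}.
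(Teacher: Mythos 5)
Your argument is correct, and it reaches all three formulas. The derivations of $\tau_\partial$ and $\sigma_\partial$ from the commutative diagram are exactly the paper's. For $\delta_\partial$ you take a genuinely longer route than the paper: you expand $(\delta+\partial)h_\partial$ via the perturbed homotopy identity $\varpi_\partial=\id-[\delta+\partial,h_\partial]$, invoke $(\delta+\partial)\partial=-\partial\delta$, cancel the two copies of $h\partial\delta$ using $(\id+h\partial)h_\partial=h$, and finally commute $\partial$ past the Neumann series via $\partial(h\partial)^k=(\partial h)^k\partial$. The paper instead applies $\sigma$ on the left of $\tau\delta_\partial=(\id+h\partial)\circ(\delta+\partial)\circ(\id+h\partial)^{-1}\circ\tau$ and exploits the side condition $\sigma h=0$ twice: once to collapse $\sigma\circ(\id+h\partial)$ to $\sigma$, and once (through $h_\partial=h(\id+\partial h)^{-1}$, hence $\sigma h_\partial=0$) to reduce the $\delta$-summand of $\sigma(\delta+\partial)(\id-h_\partial\partial)\tau$ to $\delta|_V$. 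The annihilation relations of the contraction thus do the work that your appeal to the perturbed homotopy identity and to $(\delta+\partial)^2=0$ does; both derivations are sound, the paper's being a few lines shorter and avoiding any element-wise computation.
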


\begin{proof}
The equations for $\tau_\partial$ and $\sigma_\partial$ follow immediately
from the commutativity of the diagram~\eqref{pluto}.

According to Definition~\ref{def:PertOp}, we have
\[ \tau\delta_\partial =(\id+h\partial)\circ(\delta+\partial)\circ(\id+h\partial)^{-1}\circ \tau
 .\]
Since $\sigma\tau=\id_V$, it follows that
\[ \delta_\partial=\sigma\circ(\id+h\partial)\circ
(\delta+\partial)\circ(\id+h\partial)^{-1}\circ\tau .\]
Since $\sigma h=0$, the equation simplifies to
\[ \delta_\partial=\sigma\circ
(\delta+\partial)\circ(\id+h\partial)^{-1}\circ\tau \]
or, equivalently,
\[ \delta_\partial=\sigma\circ
(\delta+\partial)\circ(\id-h_\partial \partial)\circ\tau .\]
Since $h_\partial=h (\id+\partial h)^{-1}$ and $\sigma h=0$, we obtain
\[ \sigma\delta(\id-h_\partial \partial)\tau
=\delta|_V\sigma(\id-h_\partial \partial)\tau
=\delta|_V\sigma\tau=\delta|_V .\]
Therefore, we conclude that
\[ \delta_\partial-\delta|_V=\sigma\circ
\partial\circ(\id-h_\partial \partial)\circ\tau = \sigma\circ
\partial\circ(\id + h \partial)^{-1}\circ\tau  .\qedhere \]
\end{proof}

%\begin{comment}
%\begin{proof}
%The formula of $\tau_\partial$ is clear. For the formula of $\sigma_\partial$, applying \eqref{eq:1-hd} and $(\delta + \partial)^2=0$, we have  
%\begin{align*}
%\sigma_\partial & =(\id + h\partial)  \Big((\id + h\partial )\inv (\id - h \delta)  -(\delta+\partial) h_\partial \Big) \\
%& =(\id - h \delta)(\id + \partial h)(\id + \partial h)\inv - (\id + h\partial) (\delta+\partial) h(\id + \partial h)\inv \\
%& =  \sigma (\id + \partial h)\inv.
%\end{align*} 
%For the last formula, it follows from the definitions that 
%$$
%\delta_\partial = \sigma_\partial \circ (\delta + \partial ) \circ \tau_\partial = \sigma_\partial \delta \tau_\partial + \sigma_\partial \partial \tau_\partial.
%$$
%One can show that  
%\begin{gather*}
%\sigma_\partial \delta \tau_\partial = \delta + \sigma \Big(\sum_{n =0}^\infty (-1)^{n+1} n \  \partial (h \partial)^{n} \Big)\tau,  \quad \text{and} \quad 
%\sigma_\partial \partial \tau_\partial =\sigma \Big(\sum_{n =0}^\infty (-1)^{n } (n+1) \  \partial (h \partial)^{n} \Big)\tau.
%\end{gather*}
%Thus, 
%\begin{align*}
%\delta_\partial & = \delta + \sigma \Big(\sum_{n =0}^\infty (-1)^{n } (n+1 -n) \  \partial (h \partial)^{n} \Big)\tau \\
%& = \delta +  \sum_{n =0}^\infty    \sigma \partial (-h \partial)^{n} \tau.
%\end{align*}
%This completes the proof.
%\end{proof}
%\end{comment}

\subsection{Hom spaces and tensor products of contractions}\label{sec:HomTensorConstruction}

In the study of Atiyah classes, it is necessary to construct a contraction with its big space being  
$$
W\dual \otimes_R \End_R(W) \cong W\dual \otimes_R W\dual \otimes_R W
$$
using a given contraction $(W, \delta; h)$ over a commutative dg ring $R$. 
This construction is commonly known as the ``tensor trick'' in the literature. 
For example, contractions of $(m,n)$-tensors can be found in \cite{MR3877426}. Also see \cite{MR2760671,MR4504932}. 
In this paper, we use the formulation of Hom spaces.

\begin{lemma}\label{lem:HomContraction}
Let $(W ,\delta; h)$ and $(W',\delta'; h')$ be contractions over a commutative dg ring $R$.
The triple 
$$
\big(\Hom_R(W , W' ), \; D ; \; H  \big)
$$
with
\begin{align*}
D(f) &  = \delta' \circ f - (-1)^{|f|}f \circ \delta, \\
H(f) & = h' \circ f + (-1)^{|f|} (\id_{W'} -[\delta',h']) \circ f \circ h
\end{align*}
is also a contraction over $R$.
Furthermore, if the contraction data associated with $(W ,\delta; h)$ and $(W',\delta'; h')$ are 
$$
\begin{tikzcd}
(V,d)
\arrow[r, " \tau", shift left,hook] &  (W ,\delta)
\arrow[l, " \sigma", shift left,two heads] \arrow[loop, "h",out=12,in= -12,looseness = 3]
\end{tikzcd},
\qquad\text{and}\qquad
\begin{tikzcd}
(V',d')
\arrow[r, " \tau'", shift left,hook] &  (W' ,\delta')
\arrow[l, " \sigma'", shift left,two heads] \arrow[loop, "h'",out=12,in= -12,looseness = 3]
\end{tikzcd}
$$
respectively, then the contraction data associated with $\big(\Hom_R(W , W' ), \; D ; \; H  \big)$ is
$$
\begin{tikzcd}
\big(\Hom_R(V , V' ),   \tilde D \big)
\arrow[r, " \cT", shift left,hook] &  \big(\Hom_R(W , W' ),   D \big) 
\arrow[l, " \Sigma", shift left,two heads] \arrow[loop, "H",out=8,in= -8,looseness = 3]
\end{tikzcd}
$$
with
\[ \cT(g)  = \tau' \circ g \circ \sigma, \qquad 
\Sigma(f)  = \sigma' \circ f \circ \tau, \qquad
\tilde D(g)   = d' \circ g - (-1)^{|g|}g \circ d, \]
for $g \in \Hom_R(V , V' )$ and $f \in \Hom_R(W , W' )$. 
\end{lemma}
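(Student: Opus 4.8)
The plan is to verify the two defining identities of a contraction — namely $H^2 = 0$ and $HDH = H$ — directly from the corresponding identities $h^2 = 0$, $h\delta h = h$, $(h')^2 = 0$, $h'\delta' h' = h'$, and then to compute the induced contraction data $(\cT,\Sigma,H)$ using the formulas from Section~\ref{sec:CompareHLP}. First I would observe that $D$ is indeed a differential on $\Hom_R(W,W')$: the relation $D^2 = 0$ is the standard computation for the Hom of two complexes, using $\delta^2 = 0$ and $(\delta')^2 = 0$ together with the Koszul sign rule built into the definition of $D$. The key structural fact I would isolate at the outset is that the projection operators $\varpi = \id_W - [\delta,h]$ and $\varpi' = \id_{W'} - [\delta',h']$ satisfy $\varpi\tau\sigma = \tau\sigma$-type absorption identities; more precisely, since $[\delta,h]$ and $[\delta',h']$ are the operators $\id - \tau\sigma$ and $\id - \tau'\sigma'$ respectively, the term $(\id_{W'} - [\delta',h'])$ appearing in $H$ is exactly $\tau'\sigma'$. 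This reformulation, $H(f) = h'\circ f + (-1)^{|f|}\,\tau'\sigma'\circ f\circ h$, will make all the sign bookkeeping and cancellations transparent.

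Next I would check $H^2 = 0$. Applying $H$ twice and expanding, one gets four terms; the purely $h'$-term vanishes by $(h')^2 = 0$, the term with $\tau'\sigma'\circ f\circ h\circ h$ vanishes by $h^2 = 0$ (using $\sigma h = 0$ is not even needed here), and the two cross terms combine: one needs $h'\circ\tau'\sigma' = 0$, which holds because $h'\tau' = 0$, and $\sigma' h' = 0$, which kills the remaining term. The sign in each cancellation works out because $H$ shifts degree by $-1$ and the Koszul conventions are consistent. Then for $HDH = H$, I would substitute the reformulated $H$, expand $D(H(f))$ using $D(f) = \delta'\circ f - (-1)^{|f|} f\circ\delta$, and reduce using $h'\delta' h' = h'$ and $h\delta h = h$ on the $h'$-heavy and $h$-heavy pieces respectively, while the mixed pieces collapse via $\sigma'\delta' = d'\sigma'$, $\delta\tau = \tau d$ (equivalently $[\delta,h] = \id - \tau\sigma$) and the annihilation identities $h'\tau' = 0 = \sigma' h'$. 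I expect this to be the longest computation but entirely mechanical once the reformulation is in place.

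Finally, for the induced contraction data: by the recipe of Section~\ref{sec:CompareHLP}, the small space is $\im(\id - [D,H]) = \im\big((\id - H D - D H)\big)$, which I would identify with $\tau'\circ(\argument)\circ\sigma$ applied to $\Hom_R(V,V')$ — that is, $\cT(g) = \tau'\circ g\circ\sigma$ — by verifying that $\id_{\Hom} - [D,H]$ equals the operator $f\mapsto \tau'\sigma'\circ f\circ\tau\sigma$, which one reads off by substituting $[\delta,h] = \id - \tau\sigma$ and $[\delta',h'] = \id - \tau'\sigma'$ into $[D,H]$. The projection $\Sigma$ induced by this idempotent is then $\Sigma(f) = \sigma'\circ f\circ\tau$, since $\sigma'(\tau'\sigma' f\tau\sigma)\tau = \sigma' f \tau$ and $\Sigma\cT = \id$ follows from $\sigma'\tau' = \id_{V'}$, $\sigma\tau = \id_V$. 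The differential $\tilde D$ on $\Hom_R(V,V')$ transported from $D$ is visibly $\tilde D(g) = d'\circ g - (-1)^{|g|} g\circ d$, because $\cT$ intertwines $\tilde D$ and $D$ by construction and $\sigma',\tau$ are cochain maps. The main obstacle throughout is keeping the Koszul signs consistent — in particular the sign $(-1)^{|f|}$ in $H$ interacts with the sign $(-1)^{|f|}$ in $D$ and with the sign appearing when a degree $-1$ operator passes a homogeneous element — so I would fix once and for all that $H$ has degree $-1$, $D$ has degree $+1$, and carry the signs symbolically rather than trying to guess them, checking the final identities on a homogeneous $f$.
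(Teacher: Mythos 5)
Your proposal is correct. The paper in fact states Lemma~\ref{lem:HomContraction} without proof (deferring to the standard ``tensor trick'' literature), and your direct verification is exactly the standard argument: the reformulation $\id_{W'}-[\delta',h']=\tau'\sigma'$ is the right simplification, after which $H^2=0$ follows from $(h')^2=0$, $h'\tau'=0$, $\sigma'h'=0$, $h^2=0$, the identity $HDH=H$ follows from $h'\delta'h'=h'$, $h\delta h=h$, $\sigma'\tau'=\id$, and the chain-map relations $\delta'\tau'=\tau'd'$, $\sigma'\delta'=d'\sigma'$, and the computation $\id-[D,H]\colon f\mapsto\tau'\sigma'\circ f\circ\tau\sigma$ identifies the small space with $\Hom_R(V,V')$ via $\cT$ and $\Sigma$ as claimed.
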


Let $(W_i,\delta_i; h_i)$, $i=1,\cdots,n$, be contractions over $R$. 
It follows from the usual tensor trick that the triple
$$
\big(W_1 \otimes_R \cdots \otimes_R W_n, D^n; H^n \big)  
$$
with
\begin{align}
D^n & = \sum_{i=1}^n \id_{W_1} \otimes \cdots \otimes \id_{W_{i-1}}  \otimes\, \delta_i \otimes \id_{W_{i+1}} \otimes \cdots \otimes \id_{W_n}, \label{eq:diff_tensor} \\
 H^n   & = \sum_{i=1}^n (\id_{W_1} - [\delta_1,h_1]) \otimes \cdots \otimes (\id_{W_{i-1}} - [\delta_{i-1},h_{i-1}]) \otimes h_i \otimes \id_{W_{i+1}} \cdots \otimes \id_{W_n} \label{eq:HPT_tensor}
\end{align}
is also a contraction over $R$.
Furthermore, if the contraction data associated with $(W_i,\delta_i; h_i)$ is 
$$
\begin{tikzcd}
(V_i , d_i)
\arrow[r, " \tau_i", shift left,hook] &  (W_i ,\delta_i)
\arrow[l, " \sigma_i", shift left,two heads] \arrow[loop, "h_i",out=12,in= -12,looseness = 3]
\end{tikzcd},
$$
then the contraction data of the tensor product is 
$$
\begin{tikzcd}
\big(V_1 \otimes_R \cdots \otimes_R V_n, \tilde D^n \big)
\arrow[r, " \cT^n", shift left,hook] &  \big(W_1 \otimes_R \cdots \otimes_R W_n, D^n \big)
\arrow[l, " \Sigma^n", shift left,two heads] \arrow[loop, "H^n",out=5,in= -5,looseness = 3]
\end{tikzcd},
$$
where
\begin{gather*}
\cT^n  = \tau_1 \otimes \cdots \otimes \tau_n,  \qquad
 \Sigma^n  = \sigma_1 \otimes \cdots \otimes \sigma_n, \\
\tilde D^n  = \sum_{i=1}^n \id_{V_1} \otimes \cdots \otimes \id_{V_{i-1}}  \otimes\, d_i \otimes \id_{V_{i+1}} \otimes \cdots \otimes \id_{V_n}.
\end{gather*}

By Lemma~\ref{lem:HomContraction}, we have the following

\begin{proposition}\label{prop:HomTensorCont}
Let $(W_i,\delta_i; h_i)$, $i=1,\cdots,n$,
and $(\bar W_j,\bar\delta_j; \bar h_j)$, $j=1, \cdots, m$, be contractions over $R$.
Then the triple  
$$
\big(\Hom_R(W_1\otimes_R \cdots \otimes_R W_n, 
\bar W_1 \otimes_R \cdots \otimes_R \bar W_m), \; D_n^m; \; H_n^m \big)
$$
is also a contraction over $R$, where $D_n^m$ and $H_n^m$ are determined by 
\eqref{eq:diff_tensor}, \eqref{eq:HPT_tensor} and Lemma~\ref{lem:HomContraction}. 
Furthermore, if the  contraction data 
associated with $(W_i,\delta_i; h_i)$ and $(\bar W_j ,\bar\delta_j ; \bar h_j )$ are 
$$
\begin{tikzcd}
(V_i , d_i)
\arrow[r, " \tau_i", shift left,hook] &  (W_i ,\delta_i)
\arrow[l, " \sigma_i", shift left,two heads] \arrow[loop, "h_i",out=12,in= -12,looseness = 3]
\end{tikzcd}
\qquad\text{and}\qquad
\begin{tikzcd}
(\bar V_j  , \bar d_j)
\arrow[r, " \bar\tau_j ", shift left,hook] &  (\bar W_j  ,\bar \delta_j )
\arrow[l, " \bar\sigma_j ", shift left,two heads] \arrow[loop, "\bar h_j ",out=12,in= -12,looseness = 3]
\end{tikzcd}
$$
respectively, then the diagram 
$$
\begin{tikzcd}
\big( \tilde\TT_n^m  , \tilde D_n^m\big)
\arrow[r, " \cT_n^m", shift left,hook] &  \big(\TT_n^m ,D_n^m\big)
\arrow[l, " \Sigma_n^m", shift left,two heads] \arrow[loop, "H_n^m",out=10,in= -10,looseness = 3]
\end{tikzcd},
$$
with
\begin{gather*}
\tilde\TT_n^m  = \Hom_R(V_1\otimes_R \cdots \otimes_R V_n, \bar V_1 \otimes_R \cdots \otimes_R \bar V_m ), \\
\TT_n^m = \Hom_R(W_1\otimes_R \cdots \otimes_R W_n,\bar W_1 \otimes_R \cdots \otimes_R \bar W_m ), \\
\cT_n^m(g)  = \bar\cT^m \circ g \circ \Sigma^n, \qquad
\Sigma_n^m(f)  = \bar\Sigma^m \circ f \circ \cT^n. 
\end{gather*}
is a contraction data.
\end{proposition}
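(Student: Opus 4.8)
The plan is to obtain Proposition~\ref{prop:HomTensorCont} purely formally, by composing two constructions already in hand: the iterated tensor trick recalled immediately above, and the Hom-space construction of Lemma~\ref{lem:HomContraction}. No new idea enters; the entire content is bookkeeping of the maps and signs.

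First I would feed the contractions $(W_1,\delta_1;h_1),\dots,(W_n,\delta_n;h_n)$ into the tensor trick to produce the contraction $\bigl(W_1\otimes_R\cdots\otimes_R W_n,\,D^n;\,H^n\bigr)$ over $R$, whose associated contraction data has small space $V_1\otimes_R\cdots\otimes_R V_n$, differential $\tilde D^n$, injection $\cT^n=\tau_1\otimes\cdots\otimes\tau_n$ and surjection $\Sigma^n=\sigma_1\otimes\cdots\otimes\sigma_n$; here $D^n$ and $H^n$ are given by \eqref{eq:diff_tensor} and \eqref{eq:HPT_tensor}. Running the same construction on the $\bar W_j$ yields the contraction $\bigl(\bar W_1\otimes_R\cdots\otimes_R\bar W_m,\,\bar D^m;\,\bar H^m\bigr)$ together with its own tensor-trick data $\bar\cT^m$, $\bar\Sigma^m$ and the corresponding differential.

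Next I would apply Lemma~\ref{lem:HomContraction} verbatim with $(W,\delta;h)=\bigl(W_1\otimes_R\cdots\otimes_R W_n,\,D^n;\,H^n\bigr)$ and $(W',\delta';h')=\bigl(\bar W_1\otimes_R\cdots\otimes_R\bar W_m,\,\bar D^m;\,\bar H^m\bigr)$. The lemma then states directly that $\bigl(\Hom_R(W_1\otimes_R\cdots\otimes_R W_n,\,\bar W_1\otimes_R\cdots\otimes_R\bar W_m),\,D_n^m;\,H_n^m\bigr)$ is a contraction over $R$, where $D_n^m$ and $H_n^m$ are \emph{by definition} the operators $D$ and $H$ of the lemma assembled out of $D^n,H^n,\bar D^m,\bar H^m$ --- which is exactly what ``determined by \eqref{eq:diff_tensor}, \eqref{eq:HPT_tensor} and Lemma~\ref{lem:HomContraction}'' means. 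Moreover the contraction data furnished by the lemma has small space $\Hom_R(V_1\otimes_R\cdots\otimes_R V_n,\,\bar V_1\otimes_R\cdots\otimes_R\bar V_m)=\tilde\TT_n^m$, differential $\tilde D_n^m$, and injection/surjection
\[
\cT_n^m(g)=\tau'\circ g\circ\sigma=\bar\cT^m\circ g\circ\Sigma^n,
\qquad
\Sigma_n^m(f)=\sigma'\circ f\circ\tau=\bar\Sigma^m\circ f\circ\cT^n,
\]
which are precisely the maps asserted in the statement. Since tensor products, Hom spaces and compositions of $R$-linear maps are again $R$-linear, every object produced along the way is automatically a construction over $R$, so nothing extra has to be checked there.

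The one place requiring care --- and the closest thing to an obstacle --- is the Koszul-sign bookkeeping when $D_n^m$ and $H_n^m$ are written out explicitly: the homotopy operator of Lemma~\ref{lem:HomContraction} involves the target-side projector $\id_{\bar W}-[\bar D^m,\bar H^m]$, which equals $\bar\cT^m\circ\bar\Sigma^m$, while $D^n,H^n,\bar D^m,\bar H^m$ themselves already carry the alternating signs produced by \eqref{eq:diff_tensor} and \eqref{eq:HPT_tensor}; one must check that the combined signs match those in the lemma's formulas, with $|g|$ and $|f|$ denoting the degree of a homogeneous $R$-linear map. This is routine once the degree conventions are pinned down, so no genuine difficulty arises.
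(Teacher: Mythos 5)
Your proposal is correct and is exactly the paper's argument: the paper introduces the proposition with ``By Lemma~\ref{lem:HomContraction}, we have the following,'' i.e.\ it likewise obtains the result by first forming the two tensor-product contractions via \eqref{eq:diff_tensor}--\eqref{eq:HPT_tensor} and then applying Lemma~\ref{lem:HomContraction} to the Hom space between them. Your identification of the injection and surjection as $\bar\cT^m\circ(\argument)\circ\Sigma^n$ and $\bar\Sigma^m\circ(\argument)\circ\cT^n$ matches the stated formulas, so nothing further is needed.
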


Let $(W,\delta;h)$ be a contraction over $R$ whose associated contraction data is 
$$
\begin{tikzcd}
(V , d)
\arrow[r, " \tau", shift left,hook] &  (W ,\delta)
\arrow[l, " \sigma", shift left,two heads] \arrow[loop, "h",out=12,in= -12,looseness = 3]
\end{tikzcd}.
$$
By \eqref{eq:diff_tensor} and \eqref{eq:HPT_tensor}, one has the contraction $(\TT W, D_{\TT}; H_{\TT})$ of the tensor algebra $\TT W$ generated by $W$ over $R$.  
Let $\Lambda W$ be the exterior algebra generated by $W$ over $R$, 
$D_\Lambda: \Lambda W \to \Lambda W$ be the derivation generated by $\delta$, and $H_\Lambda$ be the operator defined by the composition
$$
H_\Lambda: \Lambda W \xto{\ssym} \TT W \xto{H_\TT} \TT W \onto \Lambda W,
$$
where $\ssym: \Lambda W \to \TT W$ is the map 
$$
\ssym(w_1 \wedge \cdots \wedge w_n)= \dfrac{1}{n!} \sum_{\sigma \in S_n} \chi_\sigma \cdot w_{\sigma(1)} \otimes \cdots \otimes w_{\sigma(n)}
$$
--- $S_n$ denotes the symmetric group on the set $\{1,2,\cdots,n\}$,
and $\chi_\sigma$ is the number (either $+1$ or $-1$) satisfying the equation
$$
w_1 \wedge \cdots \wedge w_n = \chi_\sigma \cdot w_{\sigma(1)} \wedge \cdots \wedge w_{\sigma(n)}
$$
in $\Lambda W$. In other words, 
\begin{multline*}
H_\Lambda(w_1 \wedge \cdots \wedge w_n) = \dfrac{1}{n!} \sum_{\sigma \in S_n} \sum_{i=1}^n (-1)^{|w_{\sigma(1)}|+\cdots +|w_{\sigma(i-1)}|} \chi_\sigma \cdot (\id - [\delta,h])(w_{\sigma(1)}) \wedge \cdots \\
 \wedge (\id - [\delta,h])(w_{\sigma(i-1)}) \wedge h(w_{\sigma(i)}) \wedge w_{\sigma(i+1)} \cdots \wedge w_{\sigma(n)}.
\end{multline*}

\begin{proposition}\label{prop:ExteriorAlgContraction}
The triple $(\Lambda W, D_\Lambda; H_\Lambda)$ is a contraction over $R$  
with the contraction data 
$$
\begin{tikzcd}
(\Lambda V,   \tilde D_\Lambda )
\arrow[r, " \cT_\Lambda", shift left,hook] &  (\Lambda W, D_\Lambda ) 
\arrow[l, " \Sigma_\Lambda", shift left,two heads] \arrow[loop, "H_\Lambda",out=8,in= -8,looseness = 3]
\end{tikzcd},
$$ 
where $\tilde D_\Lambda$ is the derivation generated by $d$, and 
\begin{align*}
\cT_\Lambda(v_1 \wedge \cdots \wedge v_n) &= \tau(v_1) \wedge \cdots \wedge \tau(v_n), \\
\Sigma_\Lambda(w_1 \wedge \cdots \wedge w_n) & = \sigma(w_1) \wedge \cdots \wedge \sigma(w_n),
\end{align*}
for $v_1, \cdots, v_n \in V$, $w_1, \cdots, w_n \in W$. 
\end{proposition}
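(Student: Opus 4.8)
The plan is to replace $\Lambda W$ by a genuine tensor product of two smaller contractions, to which the tensor trick of \eqref{eq:diff_tensor}--\eqref{eq:HPT_tensor} applies directly. Recall that $\varpi:=\id_W-[\delta,h]=\tau\sigma$ is an idempotent cochain endomorphism of $(W,\delta)$ with $\varpi h=h\varpi=0$ (a consequence of $h\delta h=h$ and $h^2=0$); hence $W=V\oplus W'$ splits as a direct sum of subcomplexes, where $V:=\im\varpi=\ker[\delta,h]$ is the small complex $(V,d)$ with $d=\delta|_V$, and $W':=\ker\varpi=\im[\delta,h]$, and moreover $h|_V=0$, $h(W')\subseteq W'$, $[\delta,h]|_{W'}=\id_{W'}$. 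Since $\Lambda W\cong\Lambda V\otimes_R\Lambda W'$ as commutative dg $R$-algebras, with $D_\Lambda$ the corresponding tensor-product differential, I would show that $(\Lambda W,D_\Lambda;H_\Lambda)$ is exactly the tensor product of the trivial contraction $(\Lambda V,D_{\Lambda V};0)$, whose small space is $\Lambda V$, and a contraction $(\Lambda W',D_{\Lambda W'};H_{\Lambda W'})$ whose small space is $\Lambda^0 W'=R$.

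First I would treat the acyclic factor $W'$. Because $\varpi|_{W'}=0$, the homotopy on $\TT W'$ collapses to $(h|_{W'})\otimes\id^{\otimes(\bullet-1)}$ (``apply $h$ to the first slot''), and a short computation with Koszul signs identifies the induced operator $H_{\Lambda W'}=q\circ\big((h|_{W'})\otimes\id^{\otimes(\bullet-1)}\big)\circ\ssym$ on $\Lambda^n W'$ with $\tfrac1n\,\Lambda(h|_{W'})$, where $\Lambda(h|_{W'})$ is the degree $-1$ derivation of $\Lambda W'$ extending $h|_{W'}$. Then $h^2=0$ gives $\Lambda(h|_{W'})^2=0$, hence $H_{\Lambda W'}^2=0$; and since $[D_{\Lambda W'},\Lambda(h|_{W'})]$ is the derivation extending $[\delta,h]|_{W'}=\id_{W'}$, i.e.\ the Euler operator $N$, one finds $[D_{\Lambda W'},H_{\Lambda W'}]=\id$ on $\Lambda^{\geq1}W'$ and $0$ on $\Lambda^0 W'$, whence $\id-[D_{\Lambda W'},H_{\Lambda W'}]$ is the projection onto $\Lambda^0 W'=R$ and $H_{\Lambda W'}D_{\Lambda W'}H_{\Lambda W'}=H_{\Lambda W'}$. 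Thus $(\Lambda W',D_{\Lambda W'};H_{\Lambda W'})$ is a contraction over $R$ with small space $R$, injection $R\hookrightarrow\Lambda W'$ and surjection $\Lambda W'\twoheadrightarrow R$.

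Next I would identify $H_\Lambda$ with $\id_{\Lambda V}\otimes H_{\Lambda W'}$ under $\Lambda W\cong\Lambda V\otimes_R\Lambda W'$. In the defining expression $H_\Lambda=q\circ H_\TT\circ\ssym$, evaluated on a normal form $v_1\wedge\cdots\wedge v_p\wedge w_1\wedge\cdots\wedge w_q$ with $v_i\in V$ and $w_j\in W'$, the operator $\id-[\delta,h]=\varpi$ fixes $V$-slots and kills $W'$-slots, while $h$ kills $V$-slots; hence in the sum only the terms in which $h$ hits a $W'$-slot preceded by no $W'$-slot survive, and upon re-symmetrizing by $q$ they assemble, with the correct sign, into $(-1)^{|v_1|+\cdots+|v_p|}(v_1\wedge\cdots\wedge v_p)\wedge H_{\Lambda W'}(w_1\wedge\cdots\wedge w_q)$, which is precisely $\id_{\Lambda V}\otimes H_{\Lambda W'}$ applied to $(v_1\wedge\cdots\wedge v_p)\otimes(w_1\wedge\cdots\wedge w_q)$. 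Granting this, the tensor-product construction \eqref{eq:diff_tensor}--\eqref{eq:HPT_tensor} applied to $(\Lambda V,D_{\Lambda V};0)$ and $(\Lambda W',D_{\Lambda W'};H_{\Lambda W'})$ yields at once that $(\Lambda W,D_\Lambda;H_\Lambda)$ is a contraction over $R$, with small space $\Lambda V\otimes_R R\cong\Lambda V$, small differential $\tilde D_\Lambda$ the derivation generated by $d$, and injection/surjection $v_1\wedge\cdots\wedge v_n\mapsto\tau(v_1)\wedge\cdots\wedge\tau(v_n)$ and $w_1\wedge\cdots\wedge w_n\mapsto\sigma(w_1)\wedge\cdots\wedge\sigma(w_n)$ --- that is, the stated $\cT_\Lambda$ and $\Sigma_\Lambda$.

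The step I expect to be the main obstacle is this matching --- both the formula $H_{\Lambda W'}|_{\Lambda^n W'}=\tfrac1n\Lambda(h|_{W'})$ and the identification $H_\Lambda=\id_{\Lambda V}\otimes H_{\Lambda W'}$. The point is that $H_\TT$ is \emph{not} equivariant for the symmetric-group action: it singles out a distinguished slot through the left-to-right pattern $\varpi,\dots,\varpi,h,\id,\dots,\id$, so it does not descend naively to $\Lambda W$, and it is exactly the averaging built into $\ssym$ and $q$ that repairs this and converts ``apply $h$ to a chosen slot'' into a derivation-type operator. Tracking the Koszul signs correctly through that averaged sum is the only genuinely computational part; everything else follows formally from $h^2=0$, $h\delta h=h$, $\varpi^2=\varpi$, $\varpi\delta=\delta\varpi$, $\varpi h=h\varpi=0$, and the tensor trick already established in this appendix.
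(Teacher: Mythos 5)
Your proposal is correct, and it proves the proposition by a route that the paper does not spell out: the paper states Proposition~\ref{prop:ExteriorAlgContraction} without proof, leaving the reader to verify $H_\Lambda^2=0$, $H_\Lambda D_\Lambda H_\Lambda=H_\Lambda$, and the identification of the small complex directly from the explicit averaged formula for $H_\Lambda$. Your splitting $W=\im\varpi\oplus\ker\varpi$ (using $\varpi h=h\varpi=0$ and $\varpi\delta=\delta\varpi$), followed by the factorization $\Lambda W\cong\Lambda V\otimes_R\Lambda W'$ and the reduction of $H_\Lambda$ to $\id_{\Lambda V}\otimes H_{\Lambda W'}$ with $H_{\Lambda W'}|_{\Lambda^n W'}=\tfrac1n\Lambda(h|_{W'})$, is more structural: it localizes all the work in a single sign computation (I checked it, e.g.\ on $v\wedge w$ one gets $H_\Lambda(v\wedge w)=(-1)^{|v|}v\wedge h(w)$, matching the Koszul convention for $\id\otimes H_{\Lambda W'}$), after which the contraction identities and the formulas for $\cT_\Lambda$ and $\Sigma_\Lambda$ follow formally from the tensor trick already established in the appendix, together with $\Lambda(h|_{W'})^2=0$ and the fact that $[D_{\Lambda W'},\Lambda(h|_{W'})]$ is the Euler operator. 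What this buys is a conceptual explanation of why the $\tfrac{1}{n!}$-averaging in $\ssym$ is exactly what is needed; what the paper's implicit direct verification buys is brevity and no dependence on the splitting of $W$. The one point worth writing out in full if you expand this into a complete proof is the combinatorial step showing that for each permutation exactly one summand of $H_\TT$ survives and that the $(n-1)!\,$ (resp.\ $n!/q$) surviving terms attached to a fixed slot all carry the same sign $(-1)^{|w_1|+\cdots+|w_{j-1}|}$ after reordering; you have correctly identified this as the only genuinely computational part, and it does check out.
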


We need the following contraction when studying the Todd classes. 

\begin{corollary}\label{cor:ToddContraction}
The triple $(\Lambda W\dual \otimes \End W, \widehat D ;\widehat H)$ is a contraction over $R$, where $\widehat D$ and $\widehat H$ are given by Lemma~\ref{lem:HomContraction} and Proposition~\ref{prop:ExteriorAlgContraction}.  Furthermore, the induced inclusion map
$$
\widehat{\cT}: \Lambda V\dual \otimes \End V \into \Lambda W\dual \otimes \End W
$$
is an algebra morphism. 
\end{corollary}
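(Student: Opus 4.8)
The plan is to assemble the contraction $(\Lambda W\dual\otimes\End W,\widehat D;\widehat H)$ out of the three constructions already available, and then to compute its inclusion map explicitly, so that the algebra‑morphism claim reduces to two facts: that $\cT_\Lambda$ is an algebra morphism, and that $\sigma\tau=\id_V$. First I would pass to the dual module: applying Lemma~\ref{lem:HomContraction} to $(W,\delta;h)$ and to the trivial contraction $(R,d_R;0)$ on the ground dg ring produces a contraction $(W\dual,\delta^\dual;h^\dual)$ over $R$ whose contraction data has small space $V\dual$, inclusion $\tau^\dual(g)=g\circ\sigma$ and projection $\sigma^\dual(f)=f\circ\tau$. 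Feeding $(W\dual,\delta^\dual;h^\dual)$ into Proposition~\ref{prop:ExteriorAlgContraction} yields the contraction $(\Lambda W\dual,D_\Lambda;H_\Lambda)$ over $R$ together with its contraction data; crucially, by that proposition the inclusion $\cT_\Lambda\colon\Lambda V\dual\into\Lambda W\dual$ is an algebra morphism and $\Lambda V\dual,\Lambda W\dual$ are commutative dg rings.

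Next I would couple $\Lambda W\dual$ with $W$. The tensor trick (Equations~\eqref{eq:diff_tensor}--\eqref{eq:HPT_tensor}), applied to the two contractions $(\Lambda W\dual,D_\Lambda;H_\Lambda)$ and $(W,\delta;h)$ over $R$, produces a contraction $(\Lambda W\dual\otimes_R W,D';H')$ over $R$ with inclusion $\cT_\Lambda\otimes\tau$ and projection $\Sigma_\Lambda\otimes\sigma$. A second application of Lemma~\ref{lem:HomContraction}, now to $(W,\delta;h)$ and to $(\Lambda W\dual\otimes_R W,D';H')$, then gives a contraction of $\Hom_R(W,\Lambda W\dual\otimes_R W)$ over $R$. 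Under the canonical identification $\Hom_R(W,\Lambda W\dual\otimes_R W)\cong\Lambda W\dual\otimes_R\Hom_R(W,W)=\Lambda W\dual\otimes\End W$ (legitimate since $W$ is finitely generated projective over $R$ in the situations of interest, or taken as the definition of $\Lambda W\dual\otimes\End W$), this is the contraction $(\Lambda W\dual\otimes\End W,\widehat D;\widehat H)$ asserted in the statement, which settles the first claim. Unwinding Lemma~\ref{lem:HomContraction}, its inclusion map is
\[
\widehat\cT(\omega\otimes\Phi)=\cT_\Lambda(\omega)\otimes(\tau\circ\Phi\circ\sigma),\qquad \omega\in\Lambda V\dual,\ \Phi\in\End V,
\]
which is exactly the formula used in the proof of Lemma~\ref{lem:Trace}.

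It then remains to prove that $\widehat\cT$ is an algebra morphism. The product on $\Lambda W\dual\otimes\End W$ is the Koszul‑signed tensor product of the wedge product on $\Lambda W\dual$ with the composition product on $\End W=\Hom_R(W,W)$, and likewise on $\Lambda V\dual\otimes\End V$. Since Proposition~\ref{prop:ExteriorAlgContraction} already yields that $\cT_\Lambda$ respects $\wedge$, it suffices to check that the endomorphism factor $\Phi\mapsto\tau\circ\Phi\circ\sigma$ respects composition, and this is immediate from $\sigma\tau=\id_V$:
\[
(\tau\Phi\sigma)\circ(\tau\Phi'\sigma)=\tau\,\Phi\,(\sigma\tau)\,\Phi'\,\sigma=\tau\,(\Phi\circ\Phi')\,\sigma .
\]
Because $\cT_\Lambda$ and $\Phi\mapsto\tau\Phi\sigma$ are both degree‑preserving, the Koszul signs in the two multiplications agree, so $\widehat\cT=\cT_\Lambda\otimes(\tau(\argument)\sigma)$ is multiplicative. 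I expect the only genuinely delicate point to be the sign and bracketing bookkeeping in the previous paragraph — verifying that the abstract inclusion produced by the two invocations of Lemma~\ref{lem:HomContraction} and by Proposition~\ref{prop:ExteriorAlgContraction} really is $\omega\otimes\Phi\mapsto\cT_\Lambda(\omega)\otimes(\tau\Phi\sigma)$ under the chosen identification of $\Hom_R(W,\Lambda W\dual\otimes_R W)$ with $\Lambda W\dual\otimes\End W$; once that identification is pinned down, multiplicativity follows formally from $\sigma\tau=\id_V$ and Proposition~\ref{prop:ExteriorAlgContraction}.
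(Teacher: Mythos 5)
Your proposal is correct and follows essentially the same route as the paper: the contraction is assembled from Lemma~\ref{lem:HomContraction}, the tensor trick, and Proposition~\ref{prop:ExteriorAlgContraction}, and multiplicativity of $\widehat\cT=\cT_\Lambda\otimes(\tau(\argument)\sigma)$ is reduced to $\cT_\Lambda$ respecting the wedge product together with $\sigma\tau=\id_V$ making $\Phi\mapsto\tau\Phi\sigma$ respect composition, which is exactly the paper's argument. Your explicit identification $\widehat\cT(\omega\otimes\Phi)=\cT_\Lambda(\omega)\otimes(\tau\Phi\sigma)$ also matches the formula the paper uses later in the proof of Lemma~\ref{lem:Trace}.
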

\begin{proof}
The first assertion is clear. 
For the second one, since $\sigma \circ \tau =\id_V$, if follows from Lemma~\ref{lem:HomContraction} that $\cT(g)\circ \cT(g') = \cT(g \circ g')$. Thus, by Proposition~\ref{prop:ExteriorAlgContraction}, the tensor trick implies  $\widehat{\cT} =  \cT_\Lambda\dual \otimes \cT$ is an algebra morphism. 
\end{proof}
Note that since $\tau \circ \sigma \neq \id_W$, the projection map $\Sigma: \End W \to \End V$ is not necessarily an algebra morphism.

\bibliographystyle{plain}
\bibliography{refAtiyah}

\providecommand{\bysame}{\leavevmode\hbox to3em{\hrulefill}\thinspace}
\providecommand{\MR}{\relax\ifhmode\unskip\space\fi MR }
% \MRhref is called by the amsart/book/proc definition of \MR.
\providecommand{\MRhref}[2]{%
  \href{http://www.ams.org/mathscinet-getitem?mr=#1}{#2}
}
\providecommand{\href}[2]{#2}
\begin{thebibliography}{10}

\bibitem{MR86359}
M.~F. Atiyah, \emph{Complex analytic connections in fibre bundles}, Trans.
  Amer. Math. Soc. \textbf{85} (1957), 181--207. \MR{86359}

\bibitem{MR3724780}
Panagiotis Batakidis and Yannick Voglaire, \emph{Atiyah classes and dg-{L}ie
  algebroids for matched pairs}, J. Geom. Phys. \textbf{123} (2018), 156--172.
  \MR{3724780}

\bibitem{2020arXiv200601376B}
Kai {Behrend}, Hsuan-Yi {Liao}, and Ping {Xu}, \emph{{Derived Differentiable
  Manifolds}}, arXiv e-prints (2020), arXiv:2006.01376.

\bibitem{2023arXiv230710242B}
\bysame, \emph{{Differential graded manifolds of finite positive amplitude}},
  arXiv e-prints (2023), arXiv:2307.10242.

\bibitem{MR2646112}
Damien Calaque and Michel Van~den Bergh, \emph{Hochschild cohomology and
  {A}tiyah classes}, Adv. Math. \textbf{224} (2010), no.~5, 1839--1889.
  \MR{2646112}

\bibitem{MR2819233}
Alberto~S. Cattaneo and Florian Sch\"{a}tz, \emph{Introduction to
  supergeometry}, Rev. Math. Phys. \textbf{23} (2011), no.~6, 669--690.
  \MR{2819233}

\bibitem{MR3439229}
Zhuo Chen, Mathieu Sti\'{e}non, and Ping Xu, \emph{From {A}tiyah classes to
  homotopy {L}eibniz algebras}, Comm. Math. Phys. \textbf{341} (2016), no.~1,
  309--349. \MR{3439229}

\bibitem{MR3877426}
Zhuo Chen, Maosong Xiang, and Ping Xu, \emph{Atiyah and {T}odd classes arising
  from integrable distributions}, J. Geom. Phys. \textbf{136} (2019), 52--67.
  \MR{3877426}

\bibitem{MR4504932}
\bysame, \emph{Hochschild cohomology of dg manifolds associated to integrable
  distributions}, Comm. Math. Phys. \textbf{396} (2022), no.~2, 647--684.
  \MR{4504932}

\bibitem{2011arXiv1112.0816C}
Kevin~J. {Costello}, \emph{{A geometric construction of the Witten genus, II}},
  arXiv e-prints (2011), arXiv:1112.0816.

\bibitem{2004math......3266C}
M.~{Crainic}, \emph{{On the perturbation lemma, and deformations}}, arXiv
  Mathematics e-prints (2004), math/0403266.

\bibitem{MR2102846}
Vasiliy Dolgushev, \emph{Covariant and equivariant formality theorems}, Adv.
  Math. \textbf{191} (2005), no.~1, 147--177. \MR{2102846}

\bibitem{MR2199629}
\bysame, \emph{A formality theorem for {H}ochschild chains}, Adv. Math.
  \textbf{200} (2006), no.~1, 51--101. \MR{2199629}

\bibitem{MR56295}
Samuel Eilenberg and Saunders Mac~Lane, \emph{On the groups {$H(\Pi,n)$}. {I}},
  Ann. of Math. (2) \textbf{58} (1953), 55--106. \MR{56295}

\bibitem{MR1293654}
Boris~V. Fedosov, \emph{A simple geometrical construction of deformation
  quantization}, J. Differential Geom. \textbf{40} (1994), no.~2, 213--238.
  \MR{1293654}

\bibitem{MR3331615}
Owen Gwilliam and Ryan Grady, \emph{One-dimensional {C}hern-{S}imons theory and
  the {$\hat A$} genus}, Algebr. Geom. Topol. \textbf{14} (2014), no.~4,
  2299--2377. \MR{3331615}

\bibitem{MR1671737}
M.~Kapranov, \emph{Rozansky-{W}itten invariants via {A}tiyah classes},
  Compositio Math. \textbf{115} (1999), no.~1, 71--113. \MR{1671737}

\bibitem{MR1671725}
Maxim Kontsevich, \emph{Rozansky-{W}itten invariants via formal geometry},
  Compositio Math. \textbf{115} (1999), no.~1, 115--127. \MR{1671725}

\bibitem{MR2062626}
\bysame, \emph{Deformation quantization of {P}oisson manifolds}, Lett. Math.
  Phys. \textbf{66} (2003), no.~3, 157--216. \MR{2062626}

\bibitem{MR3293862}
Alexei Kotov and Thomas Strobl, \emph{Characteristic classes associated to
  {$Q$}-bundles}, Int. J. Geom. Methods Mod. Phys. \textbf{12} (2015), no.~1,
  1550006, 26. \MR{3293862}

\bibitem{MR2989383}
Camille Laurent-Gengoux, Mathieu Sti\'{e}non, and Ping Xu, \emph{Exponential
  map and {$L_\infty$} algebra associated to a {L}ie pair}, C. R. Math. Acad.
  Sci. Paris \textbf{350} (2012), no.~17-18, 817--821. \MR{2989383}

\bibitem{MR4271478}
\bysame, \emph{Poincar\'{e}-{B}irkhoff-{W}itt isomorphisms and {K}apranov
  dg-manifolds}, Adv. Math. \textbf{387} (2021), Paper No. 107792, 62.
  \MR{4271478}

\bibitem{MR4584414}
Hsuan-Yi Liao and Seokbong Seol, \emph{Keller admissible triples and {D}uflo
  theorem}, J. Math. Pures Appl. (9) \textbf{174} (2023), 1--43. \MR{4584414}

\bibitem{MR3650387}
Hsuan-Yi Liao, Mathieu Sti\'{e}non, and Ping Xu, \emph{Formality theorem for
  {$\mathfrak{g}$}-manifolds}, C. R. Math. Acad. Sci. Paris \textbf{355}
  (2017), no.~5, 582--589. \MR{3650387}

\bibitem{MR3754617}
\bysame, \emph{Formality theorem for differential graded manifolds}, C. R.
  Math. Acad. Sci. Paris \textbf{356} (2018), no.~1, 27--43. \MR{3754617}

\bibitem{MR3964152}
\bysame, \emph{Formality and {K}ontsevich-{D}uflo type theorems for {L}ie
  pairs}, Adv. Math. \textbf{352} (2019), 406--482. \MR{3964152}

\bibitem{1998math......8081M}
K.~C.~H. {Mackenzie}, \emph{{Double Lie algebroids and the double of a Lie
  bialgebroid}}, arXiv Mathematics e-prints (1998), math/9808081.

\bibitem{MR1650045}
K.~C.~H. Mackenzie, \emph{Drinfel'd doubles and {E}hresmann doubles for {L}ie
  algebroids and {L}ie bialgebroids}, Electron. Res. Announc. Amer. Math. Soc.
  \textbf{4} (1998), 74--87. \MR{1650045}

\bibitem{MR2157566}
Kirill C.~H. Mackenzie, \emph{General theory of {L}ie groupoids and {L}ie
  algebroids}, London Mathematical Society Lecture Note Series, vol. 213,
  Cambridge University Press, Cambridge, 2005. \MR{2157566}

\bibitem{MR2760671}
Marco Manetti, \emph{A relative version of the ordinary perturbation lemma},
  Rend. Mat. Appl. (7) \textbf{30} (2010), no.~2, 221--238. \MR{2760671}

\bibitem{MR2709144}
Rajan~Amit Mehta, \emph{Supergroupoids, double structures, and equivariant
  cohomology}, ProQuest LLC, Ann Arbor, MI, 2006, Thesis (Ph.D.)--University of
  California, Berkeley. \MR{2709144}

\bibitem{MR2534186}
\bysame, \emph{{$Q$}-algebroids and their cohomology}, J. Symplectic Geom.
  \textbf{7} (2009), no.~3, 263--293. \MR{2534186}

\bibitem{MR3319134}
Rajan~Amit Mehta, Mathieu Sti\'{e}non, and Ping Xu, \emph{The {A}tiyah class of
  a dg-vector bundle}, C. R. Math. Acad. Sci. Paris \textbf{353} (2015), no.~4,
  357--362. \MR{3319134}

\bibitem{MR1460632}
Tahar Mokri, \emph{Matched pairs of {L}ie algebroids}, Glasgow Math. J.
  \textbf{39} (1997), no.~2, 167--181. \MR{1460632}

\bibitem{MR281224}
Pierre Molino, \emph{Classe d'{A}tiyah d'un feuilletage et connexions
  transverses projetables}, C. R. Acad. Sci. Paris S\'{e}r. A-B \textbf{272}
  (1971), A779--A781. \MR{281224}

\bibitem{MR4393962}
Seokbong Seol, Mathieu Sti\'{e}non, and Ping Xu, \emph{Dg manifolds, formal
  exponential maps and homotopy {L}ie algebras}, Comm. Math. Phys. \textbf{391}
  (2022), no.~1, 33--76. \MR{4393962}

\bibitem{2022arXiv221016769S}
Mathieu {Sti{\'e}non}, Luca {Vitagliano}, and Ping {Xu}, \emph{{$A_\infty$
  Algebras from Lie Pairs}}, arXiv e-prints (2022), arXiv:2210.16769.

\bibitem{MR4150934}
Mathieu Sti\'{e}non and Ping Xu, \emph{Fedosov dg manifolds associated with
  {L}ie pairs}, Math. Ann. \textbf{378} (2020), no.~1-2, 729--762. \MR{4150934}

\bibitem{MR1480150}
A.~Yu. Va\u{\i}ntrob, \emph{Lie algebroids and homological vector fields},
  Uspekhi Mat. Nauk \textbf{52} (1997), no.~2(314), 161--162. \MR{1480150}

\bibitem{MR2971727}
Theodore~Th. Voronov, \emph{{$Q$}-manifolds and {M}ackenzie theory}, Comm.
  Math. Phys. \textbf{315} (2012), no.~2, 279--310. \MR{2971727}

\end{thebibliography}

\end{document}